\DeclareFontFamily{U}{mathx}{}
\DeclareFontShape{U}{mathx}{m}{n}{<-> mathx10}{}
\DeclareSymbolFont{mathx}{U}{mathx}{m}{n}
\DeclareMathAccent{\widecheck}{0}{mathx}{"71}
\def\blfootnote{\gdef\@thefnmark{}\@footnotetext}
\theoremstyle{plain}
\newtheorem{thm}{Theorem}[section]
\newtheorem{prop}[thm]{Proposition}
\newtheorem{lem}[thm]{Lemma}
\theoremstyle{definition}
\newtheorem{dfn}[thm]{Definition}
\newtheorem{exa}[thm]{Example}
\newtheorem*{ack}{Acknowledgements}
\theoremstyle{remark}
\newtheorem{rmk}[thm]{Remark}
\numberwithin{equation}{section}
\newcommand{\W}{\mathcal{W}}
\newcommand{\F}{\mathcal{F}}
\newcommand{\RW}{\mathcal{RW}}
\newcommand{\Hom}{\mathrm{Hom}}
\newcommand{\Z}{\mathbb{Z}}
\newcommand{\R}{\mathbb{R}}
\title{Calabi--Yau structures on Rabinowitz Fukaya categories}
\author{Hanwool Bae}
\address{Center for Quantum Structures in Modules and Spaces, Seoul National University, Seoul 08826, Republic of Korea}
\email{hanwoolb@gmail.com}
\author{Wonbo Jeong}
\address{Research Institute of Mathematics, Seoul National University, Seoul 08826, Republic of Korea}
\email{wonbo.jeong@gmail.com}
\author{Jongmyeong Kim}
\address{Center for Quantum Structures in Modules and Spaces, Seoul National University, Seoul 08826, Republic of Korea}
\email{thereisnoroyalroadtogeometry@gmail.com}
\begin{document}

\begin{abstract}
In this paper, we prove that the derived Rabinowitz Fukaya category of a Liouville domain $M$ of dimension $2n$ is $(n-1)$-Calabi--Yau assuming the wrapped Fukaya category of $M$ admits an at most countable set of Lagrangians that generate it and satisfy some finiteness condition on morphism spaces between them.

\end{abstract}
\maketitle
\tableofcontents
\blfootnote{\textit{2020 Mathematics Subject Classification}. Primary 53D37; Secondary 53D40, 18G70 \\
\indent\textit{Key Words and Phrases}. Rabinowitz Fukaya category, Calabi--Yau duality}

\section{Introduction}\label{section:introduction}

	Rabinowitz Floer homology is a Floer homology associated to a contact hypersurface of a symplectic manifold, which was first introduced by Cieliebak and Frauenfelder \cite{cie-fra09}. In particular, in \cite{cfo10}, Cieliebak, Oancea and Frauenfelder proved that the Rabinowitz Floer homology $RFH_*(M)$ of a Liouville domain $M$ of dimension $2n$ fits into a long exact sequence 
\begin{equation}\label{eq:les}
\dots \to SH^{-*}(M) \to SH_*(M) \to RFH_*(M) \to SH^{-*+1}(M) \to \cdots.
\end{equation}	
Here, the map $SH^{-*}(M) \to SH_*(M)$ factors through
$$ \begin{tikzcd}
SH^{-*}(M) \arrow{r}\arrow{d}{c^*} & SH_*(M)\\ H_{*+n}(M) \arrow{r} & H_{*+n}(M,\partial M) \arrow{u}{c_*},
\end{tikzcd} $$
where the vertical arrows $c^* : SH^{-*}(M) \to H_{*+n}(M)$ and $c_* : H_{*+n}(M,\partial M) \to SH_*(M)$ are induced by quotient maps and inclusion maps between Floer complexes in certain action windows.

In \cite{ven18, cie-oan20}, this map was understood as a continuation map. Indeed, they consider Floer homologies for Hamiltonians that are linear with negative slopes and positive slopes at infinity and then consider the continuation map from the former one to the latter one. This leads to a new approach to defining Rabinowitz Floer complex as a mapping cone of a continuation map between two Floer complexes.

Adapting this idea to an open string analogue of Rabinowitz Floer homology, Ganatra, Gao and Venkatesh \cite{ggv22} recently introduced {\em Rabinowitz wrapped Fukaya category} of a Liouville domain. For a Liouville domain $M$,  objects of the Rabinowitz wrapped Fukaya category $\RW(M)$ are certain Lagrangian submanifolds of $M$ and the morphism space between two Lagrangian submanifolds $L_0$ and $L_1$ of $M$ is given by the mapping cone of the continuation map from the Floer complex $CW_*(L_0,L_1)$ for wrapped Floer homology to that $CW^*(L_0,L_1)$ for wrapped Floer cohomology. Namely,
the Rabinowitz Floer complex between $L_0$ and $L_1$ is given by
$$ RFC^*(L_0,L_1) =\mathrm{Cone}( c : CW_{n-*}(L_0,L_1) \to CW^*(L_0,L_1) )$$
for the continuation map $c$.
Furthermore, if $M$ is Weinstein, they showed that its Rabinowitz wrapped Fukaya category $\RW(M)$ is quasi-equivalent to the formal punctured neighborhood of infinity $\widehat{\W}_{\infty}(M)$ of the wrapped Fukaya category $\W(M)$. In particular, the latter one $\widehat{\W}_{\infty}(M)$ is further shown to be quasi-equivalent to the quotient $\W(M)/\F(M)$ by the compact Fukaya category $\F(M)$ when it is a Koszul dualizing subcategory of $\W(M)$.

Meanwhile, in our previous work \cite{bjk22}, we consider plumbings of $T^*S^n$ along a tree for $n\geq 3$. The main result is that, for such a Weinstein manifold, the derived wrapped Fukaya category, the derived compact Fukaya category, and the collection of cocore disks $(D^{\pi}\mathcal{W}, D^{\pi}\mathcal{F}, L)$ form a Calabi--Yau triple. As a consequence, the quotient category $D^{\pi}\mathcal{W}/D^{\pi}\mathcal{F}$ becomes an $(n-1)$-Calabi--Yau (generalized) cluster category.

Here, Calabi--Yau property is defined as follows.
\begin{dfn}\label{dfn:calabiyau}
	A triangulated category $\mathcal{C}$ over a field $\mathbb{K}$ is said to be {\em $n$-Calabi--Yau} for some $n \in \mathbb{Z}$ if, for all objects $X$ and $Y$ of $\mathcal{C}$,
	\begin{itemize}
		\item $\dim \Hom_{\mathcal{C}} (X,Y)<\infty$ and
		\item there exists a non-degenerate pairing
		$$\beta_{X,Y} : \Hom_{\mathcal{C}}(Y,X[n]) \times \Hom_{\mathcal{C}}(X,Y) \to \mathbb{K},$$
		which is bifunctorial.	
	\end{itemize}
	The bifunctoriality means that $\beta_{X,Y}$ defines a natural transformation between two functors from 
	$\mathcal{C}^{op} \times \mathcal{C}$ to $\mathrm{Vect}\, \mathbb{K}$ given by
	$$(X,Y) \mapsto \Hom_{\mathcal{C}}(X,Y)$$
	and 
	$$(X,Y) \mapsto \Hom_{\mathcal{C}}(Y,X[n])^{\vee}.$$	
\end{dfn}

Since the compact Fukaya category is known to be Koszul dual to the wrapped Fukaya category in this case \cite{etg-lek17,ekh-lek17}, this means that the derived Rabinowitz wrapped Fukaya category of the plumbings carries an $(n-1)$-Calabi--Yau cluster structure.

At this point, the following question arises naturally:
\begin{center}
{\em When does the derived Rabinowitz wrapped Fukaya category admit a Calabi--Yau structure?}
\end{center}
In this paper, we give a partial answer to this question. Indeed, under the assumption that the wrapped Fukaya category of a Liouville domain $M$ with $2c_1(M)=0$ is generated by an at most countable family of Lagrangians of $M$,  we provide equivalent conditions for its derived Rabinowitz wrapped Fukaya category to be Calabi--Yau. This can be stated as follows.

\begin{thm}[Theorem \ref{thm:maintheorem2}]\label{thm:maintheorem}
Let $M$ be a Liouville domain of dimension $2n$ with $2c_1(M) =0$ and let $b\in H^2(M,\Z/2)$ be a background class. Assume that $M$ admits a set of admissible Lagrangians $\{L_i\}_{i \in I}$ for some at most countable set $I$ generating the wrapped Fukaya category $\mathcal{W}_b(M)$ with background class $b$. Then the following are equivalent.
\begin{enumerate}
\item\label{condition1} The derived Rabinowitz wrapped Fukaya category $D^{\pi}\RW(M)$ is $(n-1)$-Calabi--Yau.
\item\label{condition2} For all $i,j \in I$ and $k \in \mathbb{Z}$, $\dim RFH^k(L_i,L_j) <\infty$.
\item\label{condition3} For all $i,j \in I$ and $k \in \mathbb{Z}$, $\dim HW^k(L_i,L_j) <\infty$.
\end{enumerate}
\end{thm}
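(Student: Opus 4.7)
My plan is to prove (1) $\Rightarrow$ (2), the equivalence (2) $\Leftrightarrow$ (3), and finally (3) $\Rightarrow$ (1), with the last being the main content.

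\textbf{Step 1: (1) $\Rightarrow$ (2).} This is immediate from the first bullet of Definition \ref{dfn:calabiyau}: the Lagrangians $L_i$ are objects of $D^\pi \RW(M)$ with $\Hom_{D^\pi \RW(M)}(L_i, L_j[k]) = RFH^k(L_i, L_j)$, so the finiteness required of a Calabi--Yau category gives condition (2).

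\textbf{Step 2: (2) $\Leftrightarrow$ (3).} Taking cohomology of the defining cone $RFC^*(L_0, L_1) = \mathrm{Cone}(c \colon CW_{n-*}(L_0, L_1) \to CW^*(L_0, L_1))$ yields the long exact sequence
\[ \cdots \to HW_{n-k}(L_i, L_j) \to HW^k(L_i, L_j) \to RFH^k(L_i, L_j) \to HW_{n-k-1}(L_i, L_j) \to \cdots. \]
Combined with the chain-level Poincar\'e-type duality $CW_{n-*}(L_0, L_1) \simeq CW^*(L_1, L_0)^\vee$ (identifying the homological wrapped complex with the linear dual of the cohomological complex of the swapped pair), which gives $\dim HW_{n-k}(L_i, L_j) = \dim HW^k(L_j, L_i)$, one translates finite-dimensionality between $HW^*$, $HW_*$, and $RFH^*$. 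Under (3), both flanking terms in the LES are finite-dimensional, yielding (2); conversely, (2) forces the kernel and cokernel of the continuation $c$ to be finite-dimensional, and applied simultaneously across all pairs $(i,j) \in I \times I$ and combined with the duality this yields (3).

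\textbf{Step 3: (3) $\Rightarrow$ (1).} This is the main content. Write
\[ RFC^k(L_0, L_1) = CW_{n-k-1}(L_0, L_1) \oplus CW^k(L_0, L_1) \]
as graded vector spaces and define
\[ \beta_{L_0, L_1} \colon RFC^k(L_0, L_1) \otimes RFC^{n-1-k}(L_1, L_0) \to \K \]
by pairing each $CW^*$-summand on one side with the corresponding $CW_*$-summand on the other via the natural evaluation pairing of wrapped Floer theory. The verification splits into three subparts: (a) $\beta$ is a chain map of degree $n-1$---beyond compatibility of the evaluation pairing with the wrapped differential, this requires $c$ to be self-dual under the pairing, which follows from the naturality of continuation; (b) $\beta$ descends to a bifunctorial transformation on cohomology, using the compatibility of the $A_\infty$-operations on $\RW(M)$ inherited from $\W(M)$; (c) the induced cohomological pairing on $RFH^*$ is non-degenerate, using the finiteness of (3) together with non-degeneracy of the classical evaluation pairings on $HW^*$. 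Finally, extend the pairing from the full subcategory on the generators $\{L_i\}_{i \in I}$ to all of $D^\pi \RW(M)$ using split-generation and the compatibility of Calabi--Yau structures with idempotent completion.

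\textbf{Main obstacle.} The most delicate step is non-degeneracy in (c). While evaluation pairings on $HW^* \otimes HW_*$ are classically non-degenerate under finite-dimensionality, upgrading this to non-degeneracy of $\beta$ on the cone $RFC^*$ will require a careful five-lemma / diagram chase comparing the LES for $RFH$ with its dualized version. Moreover, establishing bifunctoriality in (b) must be done against the full $A_\infty$-structure of $\RW(M)$, which mixes $CW^*$ and $CW_*$ operations via continuation maps inherited from the cone construction.
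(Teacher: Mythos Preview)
Your overall strategy matches the paper's, and Steps 1 and 3 are essentially the paper's approach: the five-lemma argument comparing the short exact sequence for $RFC^*$ with its dual, and extension to the triangulated envelope, are exactly what is done in Section~\ref{section:cy}. One caveat: the ``chain-level Poincar\'e-type duality'' $CW_{n-*}(L_0,L_1)\simeq CW^*(L_1,L_0)^\vee$ you invoke as known is precisely the statement that $\overline{\gamma}$ is a quasi-isomorphism (Lemma~\ref{lem:wrappednondegenerate}(2)), and its proof is a substantial part of the paper. You should not treat it as a black box; also note it is only a quasi-isomorphism, so the dimension equality you write holds only in the finite-dimensional case, and your index should read $\dim HW_{n-k}(L_i,L_j)=\dim HW^{n-k}(L_j,L_i)^\vee$, not $\dim HW^k(L_j,L_i)$.

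The genuine gap is in Step~2, the implication (2)$\Rightarrow$(3). Knowing from the long exact sequence that $c_*:HW_{n-k}(L_i,L_j)\to HW^k(L_i,L_j)$ has finite-dimensional kernel and cokernel, even ``for all pairs simultaneously'' and ``combined with duality,'' does not by itself force $HW^k$ to be finite-dimensional: nothing you have written rules out $c_*$ being an isomorphism between two infinite-dimensional spaces. The paper's argument uses a geometric input you omit: the continuation map $c$ factors as
\[
CW_{n-*}(L_i,L_j)\xrightarrow{\pi} CF^*(L_i,L_j;H_{-\epsilon})\xrightarrow{c_{-\epsilon,\epsilon}} CF^*(L_i,L_j;H_{\epsilon})\xrightarrow{\iota} CW^*(L_i,L_j),
\]
through Floer complexes for Hamiltonians of small slope $\pm\epsilon$, which are \emph{finite-dimensional}. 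Hence the image of $c_*$ is finite-dimensional, and combined with the finite cokernel (from (2)) this gives $\dim HW^k(L_i,L_j)<\infty$. Without this factorization your argument for (2)$\Rightarrow$(3) does not close.
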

Here, $RFH^*(L_i,L_j)$ denotes the homology of the Rabinowitz Floer complex $RFC^*(L_i,L_j)$. See Remark \ref{rmk:wrappedfukayacategory} or \cite{abo11} for a definition of the wrapped Fukaya category $\W_b(M)$.

It was proved in \cite[Theorem 1.3]{cho20} that, under the assumption that $2c_1(M)=0$, $2c_1(M,L)=0$ and $L$ is oriented, there is a graded open-closed TQFT structure on the pair of a (degree shifted) Rabinowitz Floer homology $RF\mathbb{H}_*(\partial M)$ of a Liouville domain $M$ and a (degree shifted) Rabinowitz Floer homology ${RF}\mathbb{H}_*(\partial L)$ of a Lagrangian submanifold $L$ of $M$ having a cylindrical Legendrian end. In particular, if $RF\mathbb{H}^*(\partial M)$ and $RF\mathbb{H}^*(\partial L)$ are of finite type, meaning that they are finite-dimensional in each degree, then this implies that there exist non-degenerate pairings on $RF\mathbb{H}^*(\partial M)$ and $RF\mathbb{H}^*(\partial L)$. In terms of our cohomological convention, this implies the existence of a non-degenerate pairing between $RFH^*(L, L)$ and $RFH^{n-1-*} (L,L)$.

Theorem \ref{thm:maintheorem} can be viewed as a partial extension of the result of \cite{cho20} to a categorical level under the finite type assumption in the sense that we prove that the duality pairing extends to any pair of objects of the derived Rabinowitz wrapped Fukaya category. This extension was available since we worked with $A_{\infty}$-category and operators on chain level. We guess that many experts would have already expected Theorem \ref{thm:maintheorem} to be true.

Let us now return to how to prove Theorem \ref{thm:maintheorem}. We prove it by showing \eqref{condition1} $\Rightarrow$ \eqref{condition2} $\Rightarrow$ \eqref{condition3} $\Rightarrow$ \eqref{condition1}. Here, we provide the proofs of the implications \eqref{condition1} $\Rightarrow$ \eqref{condition2} and \eqref{condition2} $\Rightarrow$ \eqref{condition3} and briefly explain the strategy  for the proof of the implication \eqref{condition3} $\Rightarrow$ \eqref{condition1}, which will be proved in Section \ref{section:cy}.

The first implication \eqref{condition1} $\Rightarrow$ \eqref{condition2} follows from the definition of Calabi--Yau property and
the fact that $\Hom_{D^{\pi} \RW(M)}(L_i,L_j[k]) \cong RFH^k (L_i,L_j)$ by definition of the derived Rabinowitz wrapped Fukaya category $D^{\pi}\RW(M)$.

For the second implication \eqref{condition2} $\Rightarrow$ \eqref{condition3}, consider the long exact sequence:
\begin{equation}\label{eq:openles}
\dots \to HW_{n-*}(L_i,L_j) \xrightarrow{c_*} HW^*(L_i,L_j) \to RFH^*(L_i,L_j) \to HW_{n-*-1}(L_i,L_j) \xrightarrow{c_*} \cdots,
\end{equation}	
which is a consequence of the definition of the Rabinowitz Floer complex $RFC^*(L_i,L_j)$. In Section \ref{section:rw}, we will see that the continuation map $c : CW_{n-*}(L_i,L_j) \to CW^*(L_i,L_j)$ factors through
$$ \begin{tikzcd}
CW_{n-*}(L_i,L_j)  \arrow{r}{c} \arrow{d}{} & CW^*(L_i,L_j) \\CF^*(L_i,L_j;H_{-\epsilon}) \arrow{r}{c_{-\epsilon,\epsilon}} &  CF^*(L_i,L_j;H_{\epsilon}) \arrow{u},
\end{tikzcd} $$
where $c_{-\epsilon,\epsilon}:CF^*(L_i,L_j;H_{-\epsilon}) \to CF^*(L_i,L_j;H_{\epsilon})$ is a continuation map between Floer complexes of $L_i$ and $L_j$ for Hamiltonians $H_{\pm \epsilon}$ linear at infinity with slope $\pm \epsilon$ for a sufficiently small $\epsilon>0$. This means that the image of the continuation $c_* : HW_{n-*}(L_i,L_j) \to HW^*(L_i,L_j)$  is finite-dimensional since $\dim HF^*(L_i,L_j;H_{\pm \epsilon})$ is finite. Therefore, if $HW^k(L_i,L_j)$ is infinite-dimensional, then $RFH^k(L_i,L_j)$ is infinite-dimensional as well.

Finally, the proof of the last implication \eqref{condition3} $\Rightarrow$ \eqref{condition1} relies on the duality between the wrapped Floer cohomology $HW^*(L_0,L_1)$ and the wrapped Floer homology $HW_*(L_1,L_0)$ for Lagrangians $L_0$ and $L_1$, which comes from the natural duality between $CF^*(L_0,L_1;H_a)$ and $CF^{n-*}(L_1,L_0;-H_a)$ for a Hamiltonian $H_a$ linear at infinity with slope $a \in \mathbb{R}$. 

The duality between $HF^*(L_0,L_1;H_a)$ and $HF^{n-*}(L_1,L_0;H_{-a})$ can be obtained as the composition of the pair-of-pants product
$$HF^{n-*}(L_1,L_0; H_{-a -\epsilon}) \otimes  HF^*(L_0,L_1;H_a) \to HF^{n}(L_0,L_0; H_{-\epsilon})$$
and the integration  
$$HF^n(L_0,L_0; H_{-\epsilon}) \cong H^n(L_0,\partial L_0;\mathbb{K}) \to \mathbb{K}$$
for $\epsilon>0$ sufficiently small so that the following are isomorphic:
$$HF^{n-*}(L_1,L_0;H_{-a}) \cong HF^{n-*}(L_1,L_0;H_{-a-\epsilon}).$$

Since both of these operators originate from operators on chain complex level, we construct chain maps
\begin{align*}
\overline{\alpha} &: CW^*(L_0,L_1) \to CW_{*}(L_1,L_0)^{\vee}, \\
\overline{\gamma} &: CW_{*}(L_0,L_1) \to CW^*(L_1,L_0)^{\vee},
\end{align*}
which give the duality explained above. Then we construct a chain map
$$\overline{\beta} : RFC^*(L_0,L_1) \to RFC^{n-1-*} (L_1,L_0)^{\vee}$$
so that the following diagram commutes:
 \begin{equation*}
	\xymatrix{ 0\ar[r] & CW^*(L_0,L_1)\ar[r]^{} \ar[d]^{\overline{\alpha}}&RFC^*(L_0,L_1) \ar[r]^{} \ar[d]^{\overline{\beta}}&CW_{n-1-*}(L_0,L_1)\ar[r] \ar[d]^{\overline{\gamma}}&0 \\ 0\ar[r]& CW_*(L_1,L_0)^{\vee} \ar[r]^{} &RFC^{n-1-*}(L_1,L_0)^{\vee} \ar[r]^{}& CW^{n-1-*}(L_1,L_0)^{\vee} \ar[r]&0. }
\end{equation*}
We found it easier to use linear Hamiltonians to construct such chain maps, rather than quadratic Hamiltonians as in \cite{ggv22}.

Although the chain map $\overline{\gamma}$ is always a quasi-isomorphism, the same cannot be said for $\overline{\alpha}$ due to the same reason as in \cite[Section 3.3]{cie-oan18}. However, the condition that $HW^k(L_0,L_1)$ is finite-dimensional for each degree $k\in \Z$ ensures that $\overline{\alpha}$ is a quasi-isomorphism as well. As a result, the chain map $\overline{\beta}$ becomes a quasi-isomorphism under the assumption. Finally, we observe that the chain map $\overline{\beta}$ naturally extends to the triangulated envelope and gives a Calabi--Yau paring on the derived Rabinowitz wrapped Fukaya category.\\

Let us now give a brief overview of this paper.
In Section \ref{section:rw}, we construct Rabinowitz Fukaya category using linear Hamiltonians as mentioned above.
In Section \ref{section:cy}, we construct the chain maps $\overline{\alpha}$, $\overline{\beta}$ and $\overline{\gamma}$ that fit into the above commutative diagram and show that these are quasi-isomorphisms under the assumption that the condition \eqref{condition3} of Theorem \ref{thm:maintheorem} is satisfied. Then we show that it extends to the triangulated envelope.
Finally, in Section \ref{section:examples}, we provide three examples of Liouville domains for which  condition \eqref{condition3} of Theorem \ref{thm:maintheorem} is satisfied. This includes cotangent bundles of smooth manifolds with a finite fundamental group, plumbings of $T^*S^n$ along a tree and Weinstein domains with a periodic Reeb flow on the boundary.

\begin{ack}
	We thank Cheol-Hyun Cho, Jungsoo Kang, Yeongrak Kim, Yoosik Kim and Yong-Geun Oh for helpful discussion and comments. We are especially grateful to the anonymous referee for invaluable comments. Hanwool Bae and Jongmyeong Kim were supported by the National Research Foundation of Korea (NRF) grant funded by the Korea government (MSIT) (No.2020R1A5A1016126). Wonbo Jeong was supported by Samsung Science and Technology Foundation under project number SSTF-BA1402-52.
\end{ack}

\section{Rabinowitz wrapped Fukaya category}\label{section:rw}

In this section, we present a construction of Rabinowitz wrapped Fukaya category. For that purpose, first consider the open string analogue \eqref{eq:openles} of the long exact sequence \eqref{eq:les}. Since we use a cohomological convention, the counterparts to the symplectic homology and the symplectic cohomology in the long exact sequence \eqref{eq:les} are wrapped Floer cohomology and wrapped Floer homology as in \eqref{eq:openles}, respectively.

For the Rabinowitz Floer cohomology, in \cite{ggv22}, the authors constructed its open string counterpart in two steps.
The authors first define the Floer complexes for wrapped Floer homology and wrapped Floer cohomology using quadratic Hamiltonians. Then they define the Rabinowitz wrapped Floer complex of two admissible Lagrangians $L_0$ and $L_1$ as the cone of the continuation map from the Floer complex for wrapped Floer homology to that for wrapped Floer cohomology as in \cite{ven18}. Consequently, its cohomology fits into the long exact sequence \eqref{eq:openles}.\\

We first review the construction of wrapped Floer cohomology and wrapped Floer homology using linear Hamiltonians \cite{abo-sei10,ven18} in the following three subsections. Then we use those to construct Rabinowitz Floer complex and Rabinowitz Fukaya category in Subsection \ref{subsection:rabinowitz}.

\subsection{Floer cohomology}\label{subsection:floercohomology}
Let $(M,\omega=d\lambda)$ be a Liouville domain of dimension $2n$ for some positive integer $n$. Its boundary $\partial M$ is a contact manifold with the contact form $\lambda|_{\partial M}$. Let us denote the corresponding Reeb vector field by $R$. Then the completion of $M$ is obtained from $M$ by gluing the symplectization $[1,\infty) \times \partial M$ with the identification $\partial M = \{1 \} \times \partial M$, i.e.,
$$ \widehat{M} = M \cup_{\partial M} ([1,\infty) \times \partial M).$$

Let $b\in H^2(M,\mathbb{Z}/2)$ be a background class and let us assume $2c_1(M) =0 \in H^2(M ,\mathbb{Z}).$

An exact Lagrangian submanifold $L$ of $M$ is called {\em admissible} if
\begin{itemize}
	\item $L$ intersects the boundary  $\partial M$ transversely,
	\item the restriction $\lambda|_L$ vanishes near the boundary $\partial L = L \cap \partial M$,
	\item $L$ is graded in the sense of \cite{sei08}, and
	\item the second Stiefel--Whitney class of $L$ coincides with $b|_L$.
\end{itemize}
These requirements force any admissible Lagrangian to have a cylindrical Legendrian end near the boundary of $M$. For a given admissible Lagrangian $L$, its completion is given by
$$ \widehat{L} = L \cup_{\partial L} ([1,\infty) \times \partial L).$$

We will recall the definition of Floer cohomology of pairs of admissible Lagrangians. For that purpose, let $L_0$ and $L_1$ be admissible Lagrangian submanifolds of $M$. Consider a Hamiltonian 
$H_{\mu} : \widehat{M} \to \mathbb{R}$ such that
\begin{itemize}
	\item $H_{\mu}$ is a $C^2$-small Morse function on $M$,
	\item $H_{\mu}(r, y) =  \mu( r-1)$ for $(r,y) \in [R, \infty) \times \partial M \subset \widehat{M}$
	for some $\mu \in \mathbb{R} \setminus \mathrm{Spec} (\partial L_0,\partial L_1)$ and some $1 \leq R \leq 2$, and
	\item all the Hamiltonian chords from $L_0$ to $L_1$ for $H_{\mu}$ are non-degenerate.
\end{itemize}
We will call such Hamiltonians {\em admissible}. Here the spectrum $\mathrm{Spec}(\partial L_0,\partial L_1)$ of periods of Reeb chords from $\partial L_0$ to $\partial L_1$ is given by
\begin{equation*}
	\mathrm{Spec} (\partial L_0, \partial L_1) = \{  T \in \mathbb{R} \,|\, \mathrm{Fl}_R^T (\partial L_0) \cap \partial L_1 \neq \emptyset\}, 
\end{equation*}
where $\mathrm{Fl}_R^T$ is the time $T$-flow of the Reeb vector field $R$ on $\partial M$. 

Then we consider the space $\mathcal{P}(L_0,L_1)$ of paths from $\widehat{L}_0$ to $\widehat{L}_1$:
$$\mathcal{P} (L_0,L_1) = \left\{ \gamma : [0,1] \to \widehat{M} \, \Big| \,\gamma(0) \in \widehat{L}_0, \gamma(1) \in \widehat{L}_1\right\}$$
and, for some $\mu \in \mathbb{R} \setminus \mathrm{Spec} (\partial L_0, \partial L_1)$, consider the {action functional} $\mathcal{A}_{H_{\mu}} : \mathcal{P} (L_0,L_1) \to \mathbb{R}$ defined by
\begin{equation}\label{eq:action}
	\mathcal{A}_{H_{\mu}} (\gamma) = - \int \gamma^* \lambda + \int_{0}^1 H_{\mu} (\gamma(t) )dt - h_{L_0}(\gamma(0)) + h_{L_1} (\gamma(1)),
\end{equation}
where $h_{L_0} :  \widehat{L}_0 \to \mathbb{R}$ and $h_{L_1} : \widehat{L}_1 \to \mathbb{R}$ are smooth functions that are locally constant outside a compact subset of $M$ such that
$$ \lambda|_{\widehat{L}_0} = dh_{L_0} \text{ and } \lambda|_{\widehat{L}_1} = dh_{L_1}.$$

It is a standard fact that critical points of the action functional $\mathcal{A}_{H_{\mu}}$ are Hamiltonian chords from $\widehat{L}_0$ to $\widehat{L}_1$ for the Hamiltonian $H_{\mu}$. Let us denote the set of such Hamiltonian chords by
\begin{equation*}\label{eq:hamiltonianchords}
	\mathcal{I} ( L_0,L_1;H_\mu).
\end{equation*}
Since the Lagrangians $L_0$ and $L_1$ are assumed to be graded, every Hamiltonian chord $x$ is graded by the Maslov index $\deg x \in \mathbb{Z}$. See \cite{sei08} for details.

Then we define the {\em Floer complex} $CF^*(L_0,L_1;H_{\mu})$ to be the graded $\mathbb{K}$-vector space generated by $\mathcal{I}(L_0,L_1;H_\mu)$. More precisely, we define
\begin{equation*}
	CF^*(L_0,L_1;H_{\mu}) \coloneqq \bigoplus_{x \in \mathcal{I}(L_0,L_1; H_{\mu})} |o_x|_{\mathbb{K}}[-\deg x], 
\end{equation*}
where $|o_x|_{\mathbb{K}}$ is the $\mathbb{K}$-normalized orientation space defined in \cite[Section 3.6]{abo-sei10}. We will just denote by $x$ a generator of $|o_x|_{\mathbb{K}}$ in the rest of this paper.

An $\omega$-compatible time-dependent almost complex structure $J= (J(t))_{t\in [0,1]}$ of contact type gives rise to an $L^2$-metric on the path space $\mathcal{P}(L_0,L_1)$. After choosing a regular time-dependent almost complex structure among those, the differential 
\begin{equation*}
	\delta_{\mu} :CF^*(L_0,L_1;H_{\mu}) \to CF^{*+1}(L_0,L_1;H_{\mu})
\end{equation*}
is defined by counting {\em Floer strips}. Indeed,  let
$$ x_0 \in \mathcal{I} (L_{0},L_{1}; H_{\tau_w}) \text{ and } x_1 \in \mathcal{I} (L_{0}, L_{1}; H_{\tau_w})$$
and consider the space of Floer strips 
\begin{equation}\label{eq:floerstrip}
	\mathcal{M}(x_0,x_1;H_\mu,J) = \{ u : \mathbb{R} \times [0,1] \to \widehat{M} \,|\, u \text{ satisfies } \mathrm{(i)}, \mathrm{(ii)}, \mathrm{(iii)} \text{ and } \mathrm{(iv)} \text{ below}\}/ \mathbb{R}
\end{equation}
\begin{enumerate}[label=(\roman*)]
	\item $u(s,j) \in \widehat{L}_{j}$ for all $s\in\mathbb{R}$ and for $j=0,1$.
	\item $\lim_{s\to -\infty} u(s,t) = x_0(t)$.
	\item $\lim_{s\to \infty} u(s,t) = x_1(t)$.
	\item $\partial_s u + J(t) (\partial_t u - X_{H_{\mu}}(u)) =0$.
\end{enumerate}
where $/ \mathbb{R}$ means the quotient by the natural $\mathbb{R}$-translation.

A standard argument shows that for a generic choice of the datum $(H_{\mu},J)$, the following hold:
\begin{itemize}
	\item Every Hamiltonian chord $x \in \mathcal{I} (L_{0},L_{1}; H_{\mu})$ is non-degenerate.
	\item the pair $(H_{\mu}, J)$ is regular in the sense that the associated Fredholm operator $D\partial_{H_{\mu},J}(u)$ is surjective for all $u\in \mathcal{M}(x_0,x_1;H_\mu,J)$.
\end{itemize}

As a consequence, the space $\mathcal{M}(x_0,x_1;H_\mu, J)$ is a smooth manifold of the dimension
$$ \deg x_0 - \deg x_1 -1$$
for such Floer data, which admits a compactification obtained by adding broken Floer strips. For example, for $x_0 \in \mathcal{I}(L_{0}, L_{1};H_\mu)$ and $x_1 \in \mathcal{I}(L_{0}, L_{1};H_{\mu})$ with 
$\deg x_0 = \deg x_1 +1$, then $\mathcal{M} (x_0,x_1;H_\mu, J)$ is a zero-dimensional compact smooth manifold. As argued in \cite[Section 3.6]{abo-sei10}, in such a case, every element $u$ of $\mathcal{M}(x_0,x_1;H_\mu,J)$ induces a preferred isomorphism
$$|o_{u}| : |o_{x_1}|_{\mathbb{K}} \to |o_{x_0}|_{\mathbb{K}}.$$

Finally we define the differential
\begin{equation}\label{eq:differential}
	\delta_{\mu} : CF^*(L_{0}, L_{1};H_{\mu}) \to CF^{*+1}(L_{0},L_{1};H_{\mu})
\end{equation}
by putting
$$ \delta_{\mu} \coloneqq \sum_{ u \in \mathcal{M}(x_0,x_1;H_{\mu},J)} |o_{u}|: |o_{x_1}|_{\mathbb{K}} \to |o_{x_0}|_{\mathbb{K}}$$
for Hamiltonian chords $x_0,x_1$ given as above and extending this linearly.

The {\em Floer cohomology} $HF^*(L_0,L_1;H_{\mu})$ is defined to be the cohomology
\begin{equation*}
	HF^*(L_0,L_1;H_{\mu}) \coloneqq H^*(CF^*(L_0,L_1;H_{\mu}), \delta_{\mu}).
\end{equation*}

\subsection{Wrapped Floer cohomology}\label{subsection:wrappedfloercohomology}
To define the wrapped Floer cohomology, one needs to take the direct limit of the Floer cohomology groups for admissible Hamiltonians whose slopes tend to infinity using continuation maps. Indeed, for any pair of admissible Hamiltonians $H_{\lambda}$ and $H_{\mu}$ on $\widehat{M}$ such that $\lambda \leq \mu$, there is a chain map 
$$c_{\lambda,\mu} : CF^*(L_0,L_1 ; H_{\lambda}) \to CF^*(L_0,L_1 ;H_{\mu}),$$
called a {\em continuation map}. It is defined by counting the gradient flows of $\mathcal{A}_{H_{s}}$ of index $0$ with respect to the $L^2$-metric discussed above for a 1-parameter family $\{H^s\}_{s\in \mathbb{R}}$ of admissible Hamiltonians satisfying
\begin{itemize}
	\item $H^s = \begin{cases} H_{\mu} &(s \leq -1), \\ H_{\lambda} & (s \geq 1) \end{cases}$ \text{ and}
	\item $\partial_s H^s  \leq 0$ outside a compact subset of $\widehat{M}$.
\end{itemize}
To be more precise, one needs to consider the space of stable popsicle maps as in \eqref{eq:continuationmoduli} to define it. See \eqref{eq:continuation} for a more detailed definition.

In particular, in \cite{abo-sei10}, the authors constructed the wrapped Floer complex of a pair of Lagrangians using a homotopy direct limit of continuation maps. Let us now recall its construction to introduce some notations.

For that purpose, we first consider {\em Floer data associated to pairs of admissible Lagrangians}. 
\begin{dfn}[Floer data associated to Lagrangian pairs]\label{dfn:floerdataforlagrangian}
	For any admissible Lagrangians $L_0$ and $L_1$ of $M$, a Floer datum associated to the pair $(L_0,L_1)$ consists of a collection $\{(\tau^{L_0,L_1}_{w}, H_{\tau^{L_0,L_1}_w},J^{L_0,L_1}_w)\}_{w\in \mathbb{Z}}$ of triples of 
	\begin{itemize}
		\item real numbers $\tau^{L_0,L_1}_w$,
		\item admissible Hamiltonians $H_{\tau^{L_0,L_1}_w}$ and
		\item time-dependent $\omega$-compatible almost complex structures $J^{L_0,L_1}_w = (J^{L_0,L_1}_w(t))_{t \in [0,1]}$ of contact type.
	\end{itemize}
\end{dfn}

Let $\{L_i\}_{i\in I}$ be an at most countable collection of admissible Lagrangians of $M$. Since $I$ is at most countable, we may assume that Reeb chords from $\partial L_i$ to $\partial L_j$ are non-degenerate for all pairs $i,j \in I$ by rescaling the contact form by a generic nonzero function on $\partial M$ if necessary. As a consequence, $\mathrm{Spec}(\partial L_i,\partial L_j)$ is discrete for every pair $i,j \in I$.

We require the sequence of real numbers $\{ \tau^{L_i,L_j}_w\}_{w \in\mathbb{Z}}$ for pairs $i,j \in I$ to satisfy the following:
\begin{enumerate}[label=(\alph*)]
	\item $\tau^{L_i,L_j}_w \notin \mathrm{Spec} (\partial L_i,\partial L_j)$ for all $w\in \mathbb{Z}$.
	\item $\tau^{L_i,L_j}_w < \tau^{L_i,L_j}_{w+1}$ for all $w \in \mathbb{Z}$ and $\tau^{L_i,L_j}_0 <0 < \tau^{L_i,L_j}_1$.
	\item $\lim_{w \to \pm \infty} \tau^{L_i,L_j}_w =\pm \infty$.
	\item $\tau^{L_j,L_k}_w + \tau^{L_i,L_j}_{v} \leq \tau^{L_i,L_k}_{v+w}$ for all $i,j,k \in I$  and all $v,w \in \mathbb{Z}$.
\end{enumerate}

Furthermore, we also require the pair $(H_{\tau^{L_i,L_j}_w},J^{L_i,L_j}_w)$ for any $i,j\in I$ to satisfy the following: 
\begin{enumerate}[label=(\alph*)]
\setcounter{enumi}{4}
\item Every Hamiltonian chord $x \in \mathcal{I} (L_{i},L_{j}; H_{\tau^{L_i,L_j}_w})$ is non-degenerate.
\item The pair $(H_{\tau^{L_i,L_j}_w}, J^{L_i,L_j}_w)$ is regular in the sense that the associated Fredholm operator is surjective.
\end{enumerate}
These two conditions allow us to consider the differential $\delta_{\tau_w}$ as in \eqref{eq:differential}.

	The following lemma shows that such a collection of Floer data associated to pairs $(L_i,L_j)$ of admissible Lagrangians exists.
	\begin{lem}\label{lem:existenceoffloerdata}
		There exists a collection $\{(\tau_w^{L_i,L_j},H_{\tau^{L_i,L_j}_w},J^{L_i,L_j}_w)\}_{w\in \mathbb{Z}, i,j \in I}$ satisfying the conditions {\em (a), (b), (c), (d), (e)} and {\em(f)}.
	\end{lem}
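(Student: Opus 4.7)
The plan is to decouple the construction into two stages: first fix the real parameters $\tau_w^{L_i,L_j}$ satisfying (a)--(d), then invoke standard Floer-theoretic transversality to produce the Hamiltonians and almost complex structures satisfying (e) and (f). The main subtlety lies in the first stage, where the arithmetic constraint (d) must be reconciled with the sign condition $\tau^{L_i,L_j}_0 < 0 < \tau^{L_i,L_j}_1$ from (b).

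For the first stage, I would take $\tau_w^{L_i,L_j}$ to be independent of the pair $(i,j)$ and of the simple affine form $\tau_w := wC - \delta$ for suitable positive constants $C > \delta > 0$. Since $I$ is at most countable and each $\mathrm{Spec}(\partial L_i,\partial L_j)$ is discrete by the rescaling of the contact form already noted in the text, the union
$$S = \bigcup_{i,j \in I}\mathrm{Spec}(\partial L_i,\partial L_j) \subset \mathbb{R}$$
is countable. Hence I can first pick $\delta > 0$ with $-\delta \notin S$, then pick $C > \delta$ generically so that $wC - \delta \notin S$ for every $w \in \mathbb{Z} \setminus \{0\}$; the bad values of $C$ form a countable subset of $\mathbb{R}$, so this is possible. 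Conditions (b) and (c) are immediate from $C > 0$ and $C > \delta$, and for (d) one checks
$$\tau_v + \tau_w \;=\; (v+w)C - 2\delta \;\leq\; (v+w)C - \delta \;=\; \tau_{v+w},$$
which holds automatically as $\delta > 0$.

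For the second stage, I would enumerate the countable set of triples $(i,j,w)$ and inductively produce, for each, an admissible Hamiltonian $H_{\tau_w^{L_i,L_j}}$ of slope $\tau_w$ at infinity, together with a time-dependent $\omega$-compatible almost complex structure $J_w^{L_i,L_j}$ of contact type. Non-degeneracy of Hamiltonian chords and surjectivity of the linearized Cauchy--Riemann operator are each generic properties inside the relevant infinite-dimensional spaces of admissible Hamiltonians and of time-dependent almost complex structures, so a standard Sard--Smale/Baire category argument supplies (e) and (f) simultaneously for the countable collection. All perturbations can be carried out on a compact subset of $\widehat{M}$, preserving linearity at infinity.

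The main obstacle is really just the first stage. A naive linear choice $\tau_w = wC$ fails (b) at $w = 0$, and generic perturbations of a linear sequence tend to break (d) when $v$ and $w$ have opposite signs, because $\tau_0$ must then dominate $\tau_v + \tau_{-v}$ for every $v$. The uniform negative shift $-\delta$ is the key trick: it produces exactly enough slack that the left-hand side of (d) drops by $2\delta$ while the right-hand side drops only by $\delta$. Once the $\tau_w$ are fixed, the remainder of the proof reduces to routine transversality.
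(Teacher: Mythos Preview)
Your proof is correct and follows the same two-stage strategy as the paper: exploit countability of the union $S = \bigcup_{i,j}\mathrm{Spec}(\partial L_i,\partial L_j)$ to choose admissible slopes, then appeal to standard transversality for (e) and (f). The execution of the first stage differs slightly. You take a single affine formula $\tau_w = wC - \delta$, valid uniformly for all $w$ and all pairs $(i,j)$, with $C$ and $\delta$ chosen generically against the countable set $S$. The paper instead uses a piecewise definition: $\tau_w = wa$ for $w\geq 1$, $\tau_w = wa - \epsilon$ for $w\leq -1$, and a pair-dependent $\tau_0^{L_i,L_j}\in(-\epsilon,0)$. Your uniform negative shift makes condition (d) a one-line computation and removes any dependence on the pair; the paper's version keeps the positive slopes unshifted but then has to handle $w=0$ separately. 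Both routes rest on the same countability idea, and the second stage is identical in substance.
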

	\begin{proof}
		Let $S = \cup_{i,j \in I} \mathrm{Spec}(\partial L_i,\partial L_j) \subset \R$. Since $S$ is at most countable, there exists a positive real number $a$ such that $wa \in \R \setminus S$ for all $w\in \Z_{\geq 1}$. Indeed, otherwise, we have the equality
		$$\R_{>0} = \cup_{s\in S}\mathbb{Q}_{>0} s,$$
		where $\mathbb{Q}_{> 0}s = \{ qs\in \R |q\in \mathbb{Q}_{>0} \}$ for $s\in S \cap \R_{>0}$, which is absurd since $\R_{>0}$ is uncountable while the right hand side is at most countable.
		Similarly let us choose a sufficiently small $0<\epsilon<a$ such that $wa - \epsilon \in \R \setminus S$ for all $w\in \Z_{\leq -1}$. For every $i,j\in I$, we choose a real number $\tau_{0}^{L_i,L_j}$ such that
		$$ \tau_0^{L_i,L_j} \in (\R \setminus \mathrm{Spec}(L_i,L_j)) \cap (-\epsilon,0)$$
		and we put
		$$\tau_{w}^{L_i,L_j} = \begin{cases} wa & (w\in \mathbb{Z}_{\geq 1}), \\  wa - \epsilon& (w \in \mathbb{Z}_{\leq -1}).\end{cases}$$
		Then it is straightforward to see that the conditions (a), (b), (c) and (d) are satisfied.
		
		For each $i,j \in I$ and $w\in \Z$, the conditions (e) and (f) are satisfied for a generic choice of an admissible Hamiltonian $H_{\tau^{L_i,L_j}_{w}}$ with slope $\tau^{L_i,L_j}_w$ and a time-dependent $\omega$-compatible almost complex structure $J^{L_i,L_j}_w$ of contact type.
	\end{proof}

For the rest of this paper, let $\{L_i\}_{i \in I}$ be an at most countable family of admissible Lagrangians of $M$ and let us fix Floer data $\{(\tau^{L_i,L_j}_w, H_{\tau^{L_i,L_j}_w},J^{L_i,L_j}_w)\}_{w\in \mathbb{Z}}$ associated to all the pairs $i,j \in I$ in such a way that the above conditions (a), (b), (c), (d), (e) and (f) hold. For any $i,j \in I$, we will omit the superscript $(L_i,L_j)$ from the notation for the Floer data associated to the pair $(L_i,L_j)$ whenever the Lagrangian pair is clear from the context. For example, we will just denote the real number $\tau^{L_i,L_j}_{w}$ by $\tau_{w}$ for $w\in \mathbb{Z}$.

Let $L_0,L_1 \in \{L_i\}_{i\in I}$ be given. We define the wrapped Floer complex $CW^*(L_0,L_1)$ of $L_0$ and $L_1$ for the wrapped Floer cohomology $HW^*(L_0,L_1)$ following \cite{abo-sei10}.
First consider the increasing sequence $\{\tau_w \}_{w \in \mathbb{Z}_{\geq 1}} (\subset \{\tau_w\}_{w\in \mathbb{Z}})$ of positive real numbers, which is given as a part of the Floer data associated to the Lagrangian pair $(L_0,L_1)$. These numbers are assumed to satisfy
\begin{itemize}
	\item $\lim_{w\to \infty} \tau_w = \infty$ and
	\item $\tau_w \notin \mathrm{Spec} (\partial L_0, \partial L_1)$ for all $w \in \mathbb{Z}_{\geq 1}$.
\end{itemize}

Then, for each $w \in \mathbb{Z}_{\geq 1}$, we consider the continuation map
$$ c_{w} \coloneqq c_{\tau_w,\tau_{w+1}} : CF^*(L_0,L_1 ; H_{\tau_w}) \to CF^*(L_0,L_1 ; H_{\tau_{w+1}}),$$
which is defined precisely in \eqref{eq:continuation}.

We define the wrapped Floer complex of $L_0$ and $L_1$ by
\begin{equation*}
	CW^*(L_0,L_1) \coloneqq \bigoplus_{w =1}^{\infty} CF^*(L_0,L_1;H_{\tau_w}) \oplus \bigoplus_{w=1}^{\infty} CF^{*+1}(L_0,L_1;H_{\tau_w}) q,
\end{equation*}
where $q$ is a formal variable of degree $-1$ satisfying $q^2=0$. Then we define its differential $\delta$ by
\begin{equation}\label{eq:differential1}
	\begin{split}
		\delta(a) &= (-1)^{\deg a}\delta_{\tau_w} (a),\\
		\delta(bq) &= (-1)^{\deg b} (c_w(b) - b +\delta_{\tau_w}(b) q),
	\end{split}
\end{equation}
for all homogeneous elements $a,b \in CF^*(L_0,L_1;H_{\tau_w})$.

We may represent the differential $\delta$ (up to sign) as follows:
\begin{equation*}
\begin{tikzcd}[column sep=tiny]
CF^*(L_0,L_1;H_{\tau_1}) \ar[out=120,in=60,loop,"\delta_{\tau_1}"] & CF^*(L_0,L_1;H_{\tau_2}) \ar[out=120,in=60,loop,"\delta_{\tau_2}"] & CF^*(L_0,L_1;H_{\tau_3}) \ar[out=120,in=60,loop,"\delta_{\tau_2}"] & \cdots\\
CF^*(L_0,L_1;H_{\tau_1})q \ar[u,"\mathrm{Id}"] \ar[ur,"c_1"] \ar[out=240,in=300,loop,swap,"\delta_{\tau_1}"] & CF^*(L_0,L_1;H_{\tau_2})q \ar[u,"\mathrm{Id}"] \ar[ur,"c_2"] \ar[out=240,in=300,loop,swap,"\delta_{\tau_2}"] & CF^*(L_0,L_1;H_{\tau_3})q \ar[u,"\mathrm{Id}"] \ar[ur,"c_3"] \ar[out=240,in=300,loop,swap,"\delta_{\tau_3}"] & \cdots.
\end{tikzcd}
\end{equation*}

The wrapped Floer cohomolgy of $L_0$ and $L_1$ is defined by the cohomology
\begin{equation*}
	HW^*(L_0,L_1) \coloneqq H^*(CW^*(L_0,L_1) ,\delta).
\end{equation*}

As proved in \cite[Lemma 3.12]{abo-sei10}, we have
\begin{lem}\label{lem:directlimit} There is an isomorphism
	\begin{equation*}
		HW^*(L_0,L_1) \cong \varinjlim_{w\to \infty} HF^*(L_0,L_1;H_{\tau_w}),
	\end{equation*}
	where the latter is the direct limit of the direct system $\{[c_{v-1} \circ \dots \circ c_w] = [c_{\tau_w,\tau_v}] : HF^*(L_0,L_1; H_{\tau_w})  \to HF^*(L_0,L_1;H_{\tau_{v}})\}_{w<v}$.
\end{lem}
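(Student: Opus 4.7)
The plan is to recognize the wrapped Floer complex $CW^*(L_0,L_1)$ as the mapping telescope of the sequence of continuation maps $\{c_w\}_{w \geq 1}$ and then to deduce the lemma from the standard fact that the cohomology of such a telescope computes the direct limit of the cohomologies of the individual layers. Writing $C_w \coloneqq CF^*(L_0,L_1;H_{\tau_w})$, inspection of the formula \eqref{eq:differential1} shows that, up to fixed sign conventions, $CW^*(L_0,L_1)$ is exactly the mapping cone of the chain map
$$\Phi \;:\; \bigoplus_{w=1}^{\infty} C_w \;\longrightarrow\; \bigoplus_{w=1}^{\infty} C_w, \qquad \Phi(b) \;=\; c_w(b)-b \quad \text{for } b\in C_w,$$
where the $q$-part $\bigoplus_w C_w[1]q$ plays the role of the shifted copy.

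Once this identification is in place, the short exact sequence of chain complexes associated to the cone produces the long exact sequence
$$\cdots \to \bigoplus_w HF^*(L_0,L_1;H_{\tau_w}) \xrightarrow{\Phi_*} \bigoplus_w HF^*(L_0,L_1;H_{\tau_w}) \to H^*(CW^*(L_0,L_1)) \to \cdots$$
in cohomology, in which $\Phi_* = \bigoplus_w\bigl(c_{w*} - \mathrm{id}\bigr)$. It then suffices to invoke the following elementary fact about any direct system $\{V_w, f_w\}_{w\geq 1}$ of $\K$-vector spaces indexed by $\mathbb{Z}_{\geq 1}$: the shift-minus-identity map $\Psi : \bigoplus_w V_w \to \bigoplus_w V_w$ is injective, and its cokernel is canonically isomorphic to $\varinjlim_w V_w$. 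Injectivity is a one-line induction on the index starting from $w=1$, using that the first component of $\Psi((v_w))$ is simply $-v_1$; the cokernel computation is the standard telescope-cancellation argument that simultaneously identifies the induced direct system with the iterated continuation maps $c_{\tau_w,\tau_v} = c_{v-1}\circ \cdots \circ c_w$.

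The main point requiring genuine care is really only in the first step: matching the signs and degree shifts in \eqref{eq:differential1} with the chosen mapping-cone convention so that the above cone description is exact on the nose. After that, everything reduces to the long exact sequence in cohomology together with elementary linear algebra of direct limits, with no further Floer-theoretic input needed beyond the chain-level existence and compatibility of the continuation maps $c_w$ that are already part of the chosen Floer data.
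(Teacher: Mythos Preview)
Your proposal is correct and is essentially the standard telescope argument. The paper does not give its own proof here but simply cites \cite[Lemma 3.12]{abo-sei10}, whose argument is exactly the one you outline: identify $CW^*(L_0,L_1)$ with the mapping cone of $\Phi=\bigoplus_w(c_w-\mathrm{id})$ on $\bigoplus_w CF^*(L_0,L_1;H_{\tau_w})$, then use the cone long exact sequence together with the elementary fact that $\Phi_*$ is injective with cokernel $\varinjlim_w HF^*(L_0,L_1;H_{\tau_w})$.
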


\subsection{Wrapped Floer homology}
Now we turn to the construction of the {\em wrapped Floer homology} $HW_*(L_0,L_1)$. As above, consider a decreasing sequence $\left\{\tau_{-w} \right\}_{w \in \mathbb{Z}_{\geq 0}} \left( \subset \left\{ \tau\\
_{w}\right\}_{w\in \mathbb{Z}}\right)$ of negative real numbers, which is also given as a part of the Floer data associated to the Lagrangian pair $(L_0,L_1)$. These numbers are assumed to satisfy
\begin{itemize}
	\item $\lim_{w\to \infty} \tau_{-w} =- \infty$ and
	\item $\tau_{-w} \notin \mathrm{Spec} (\partial L_0,\partial L_1)$ for all $w \in \mathbb{Z}_{\geq 0}$.
\end{itemize}

For each $w \in \mathbb{Z}_{\geq 1}$, we consider the continuation map
$$ c_{-w} \coloneqq c_{\tau_{-w},\tau_{-(w-1)}} : CF^*(L_0,L_1 ; H_{\tau_{-w}}) \to CF^*(L_0,L_1 ; H_{\tau_{-(w-1)}})$$
and, in particular, for $w=0$, we put
$$ c_0 =0.$$

Then we define the wrapped Floer complex $CW_*(L_0,L_1)$ for the wrapped Floer homology by
\begin{equation*}\label{eq:wrappedfloercomplex2}
	CW_*(L_0,L_1) \coloneqq \prod_{w=0}^{\infty} CF^{n-*}(L_0,L_1;H_{\tau_{-w}}) \oplus \prod_{w=0}^{\infty} CF^{n-1-*}(L_0,L_1;H_{\tau_{-w}}) {q}^{\vee},
\end{equation*}
where ${q}^{\vee}$ is a formal variable of degree $1$.
To be consistent with the cohomological degree we discussed above, we say that the degree is given by
$$ \deg x = n-k $$
for all nonzero elements $x \in CW_{k}(L_0,L_1)$, $k \in \mathbb{Z}$.

For a sequence $\{a_w \in CF^{n-*}(L_0,L_1;H_{
\tau_{-w}})\}_{w\in \Z_{\geq 0}}$, we denote by $(a_w)_w \in \prod_{w=0}^{\infty} CF^{n-*}(L_0,L_1;H_{\tau_{-w}})$ the element whose $w$-th entry is given by $a_w$. 
Then we define the differential $\partial$ by
\begin{equation}\label{eq:differential2}
	\begin{split}
		\partial\left((a_w)_w\right) &= ((-1)^{\deg a_w}\delta_{\tau_w} (a_w))_w + ((-1)^{\deg a_{w+1}}c_{-(w+1)}(a_{w+1}) - (-1)^{\deg a_w} a_w)_w q^{\vee},\\
		\partial\left((b_w)_w  q^{\vee}\right) &= ( (-1)^{\deg b_w}\delta_{\tau_w}(b_w))_w {q}^{\vee},
	\end{split}
\end{equation}
for all homogeneous elements $a_w,b_w \in CF^*(L_0,L_1;H_{\tau_{-w}})$.

Once again, we may represent the differential $\partial$ (up to sign) as follows:
\begin{equation*}
\begin{tikzcd}[column sep=tiny]
\cdots  & CF^{n-*}(L_0,L_1;H_{\tau_{-2}})q^\vee \ar[out=120,in=60,loop,"\delta_{\tau_{-2}}"] & CF^{n-*}(L_0,L_1;H_{\tau_{-1}})q^\vee \ar[out=120,in=60,loop,"\delta_{\tau_{-1}}"] & CF^{n-*}(L_0,L_1;H_{\tau_0})q^\vee. \ar[out=120,in=60,loop,"\delta_{\tau_0}"]\\
\cdots  \ar[ur,"c_{-3}"]  & CF^{n-*}(L_0,L_1;H_{\tau_{-2}}) \ar[u,"\mathrm{Id}"] \ar[ur,"c_{-2}"] \ar[out=240,in=300,loop,swap,"\delta_{\tau_{-2}}"] & CF^{n-*}(L_0,L_1;H_{\tau_{-1}}) \ar[u,"\mathrm{Id}"] \ar[ur,"c_{-1}"] \ar[out=240,in=300,loop,swap,"\delta_{\tau_{-1}}"] & CF^{n-*}(L_0,L_1;H_{\tau_{0}}) \ar[u,"\mathrm{Id}"] \ar[out=240,in=300,loop,swap,"\delta_{\tau_{0}}"]
\end{tikzcd}
\end{equation*}

The wrapped Floer homology of $L_0$ and $L_1$ is defined by the homology
\begin{equation*}
	HW_*(L_0,L_1) \coloneqq H_*(CW_*(L_0,L_1) ,\partial).
\end{equation*}

Considering the dual argument of \cite[Lemma 3.12]{abo-sei10}, we have
\begin{lem} \label{lem:inverselimit} There is an isomorphism
	\begin{equation*}
		HW_*(L_0,L_1) \cong \varprojlim_{ w \to \infty} HF^{n-*}(L_0,L_1;H_{\tau_{-w}}),
	\end{equation*}
	where the latter is the inverse limit of the inverse system $\{ [c_{-(v+1)} \circ \dots \circ c_{-w}]= [c_{\tau_{-w},\tau_{-v}}]\}_{v <w}$.
\end{lem}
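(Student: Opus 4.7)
The plan is to dualize the proof of \cite[Lemma 3.12]{abo-sei10}. The key observation is that the complex $(CW_*(L_0,L_1),\partial)$ of \eqref{eq:wrappedfloercomplex2}--\eqref{eq:differential2} is, up to sign and a degree shift, the mapping cone of an endomorphism on a direct-product complex, and the long exact sequence attached to this cone will isolate the inverse limit.

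More precisely, first I would set
$$A^* \coloneqq \prod_{w\geq 0} CF^{n-*}(L_0,L_1;H_{\tau_{-w}}),$$
equipped with the direct product of the Floer differentials $\delta_{\tau_{-w}}$, and define the continuation shift $C \colon A^* \to A^*$ by
$$C\bigl((a_w)_{w\geq 0}\bigr) = \bigl(c_{-(w+1)}(a_{w+1})\bigr)_{w\geq 0}.$$
Direct inspection of \eqref{eq:differential2} then identifies $CW_*(L_0,L_1)$, up to signs, with $\mathrm{Cone}(\mathrm{Id}-C)$. This yields a short exact sequence of chain complexes
$$0 \longrightarrow \prod_{w\geq 0} CF^{n-1-*}(L_0,L_1;H_{\tau_{-w}})\,q^{\vee} \longrightarrow CW_*(L_0,L_1) \longrightarrow A^* \longrightarrow 0,$$
whose subcomplex carries only the Floer differentials and whose quotient is $A^*$.

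The next step is to pass to (co)homology. Since direct products are exact in the category of $\K$-vector spaces, homology commutes with $\prod$, so
$$H^*(A) \cong \prod_{w\geq 0} HF^{n-*}(L_0,L_1;H_{\tau_{-w}}).$$
A routine chain-level computation---lift a cycle $(a_w)_w$ in the quotient to $(a_w)_w + 0\cdot q^{\vee}$ and apply $\partial$---identifies the connecting homomorphism of the resulting long exact sequence with $\pm(\mathrm{Id} - C_*)$, where $C_*$ is the shift on the product induced by the continuation maps $c_{-w}$ on cohomology.

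Finally, I would invoke the classical Milnor identification: for any inverse system $\{F_w\}_w$ of $\K$-vector spaces, the endomorphism $\mathrm{Id}-\mathrm{shift}$ of $\prod_w F_w$ has kernel $\varprojlim_w F_w$ and cokernel $\varprojlim{}^1_w F_w$. Splitting the long exact sequence into short ones then yields the Milnor short exact sequence
$$0 \longrightarrow \varprojlim{}^1_{w} HF^{n-*+1}(L_0,L_1;H_{\tau_{-w}}) \longrightarrow HW_*(L_0,L_1) \longrightarrow \varprojlim_{w} HF^{n-*}(L_0,L_1;H_{\tau_{-w}}) \longrightarrow 0.$$
The main obstacle, and indeed what must be verified to obtain the isomorphism stated in the lemma, is the vanishing of the $\varprojlim{}^1$ term. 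I expect this to follow from a Mittag--Leffler argument: $\mathrm{Spec}(\partial L_0,\partial L_1)$ is discrete, so in each fixed cohomological degree the generators of $CF^{n-*}(L_0,L_1;H_{\tau_{-w}})$ and the images of the continuation maps $c_{\tau_{-w},\tau_{-v}}$ stabilize as $w,v\to\infty$, giving the Mittag--Leffler condition and hence $\varprojlim{}^1=0$.
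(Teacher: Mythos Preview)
Your argument is correct and follows a genuinely different route from the paper. You recognise $CW_*(L_0,L_1)$ as (up to signs) the cone of $\mathrm{Id}-C$ on the product complex and extract the Milnor $\varprojlim/\varprojlim^1$ sequence directly from the associated long exact sequence; the paper instead realises $CW_*(L_0,L_1)$ as an inverse limit of explicit finite-dimensional quotient complexes $Q_w$, applies Mittag--Leffler to pass the inverse limit through homology, and then identifies each $H_*(Q_w)$ with $HF^{n-*}(L_0,L_1;H_{\tau_{-w}})$ by a telescoping chain of quasi-isomorphisms $Q_w \simeq Q_w^0 \simeq \cdots \simeq Q_w^{w-1}$. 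Your approach is cleaner and more conceptual; the paper's approach, while heavier, has the advantage that the auxiliary complexes $Q_w$ and $Q_w^v$ are reused verbatim in the proof of the non-degeneracy lemma for the pairing $\overline{\alpha}$ later in Section~\ref{section:cy}, so the extra setup is not wasted.

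One small point: your justification for the vanishing of $\varprojlim^1$ is phrased imprecisely. The discreteness of $\mathrm{Spec}(\partial L_0,\partial L_1)$ does not make the \emph{generators} or chain-level images stabilise; what it gives you is that each $CF^{n-*}(L_0,L_1;H_{\tau_{-w}})$, and hence each $HF^{n-*}(L_0,L_1;H_{\tau_{-w}})$, is finite-dimensional in every degree. The Mittag--Leffler condition then follows immediately because a descending chain of subspaces of a finite-dimensional $\K$-vector space stabilises. This is exactly the mechanism the paper invokes as well, just applied to the tower $\{H_*(Q_w)\}$ rather than to $\{HF^{n-*}(L_0,L_1;H_{\tau_{-w}})\}$ directly.
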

\begin{proof}
	Observe that, for each $w \in \mathbb{Z}_{\geq 0}$, 
	$$C_w \coloneqq\prod_{u=w+1}^{\infty} CF^{n-*}(L_0,L_1;H_{\tau_{-u}}) \oplus \prod_{u=w}^{\infty} CF^{n-1-*}(L_0,L_1;H_{\tau_{-u}}) {q}^{\vee} $$
	is a subcomplex of $CW_*(L_0,L_1)$. We consider the quotient complex
	$$Q_w = CW_*(L_0,L_1)/C_w.$$
	Since $C_{w+1} \subset C_w$ for each $w\in \mathbb{Z}_{\geq 0}$, there is a natural quotient map $Q_{w+1} \to Q_w$. Moreover these fit into the following commutative diagram
	$$ \xymatrix{CW_*(L_0, L_1)\ar[d] \ar[dr]&\\ Q_{w+1}\ar[r] &Q_w,}$$
	where all the arrows are quotient maps.
	By definition of $CW_*(L_0,L_1)$, this allows us to identify
	\begin{equation*}\label{eq:inverselimit}
		CW_*(L_0,L_1) = \varprojlim_{w\to \infty} Q_w,
	\end{equation*}
	where the latter is the inverse limit for the quotient maps $Q_{w+1} \to Q_w$. Since each of $Q_w$ is finite-dimensional, so is $H_k(Q_w)$ for each degree $k \in \mathbb{Z}$. 
	Therefore both the towers $\{Q_w\}_{w\in \Z_{\geq 0}}$ and $\{H_k(Q_w) \}_{w\in \mathbb{Z}_{\geq 0}}$ satisfy the Mittag-Leffler condition \cite{wei94}, which implies that
	 \begin{equation}\label{eq:homologyinverselimit}
	 	HW_*(L_0,L_1) = H_*(\varprojlim_{w\to \infty} Q_w)\cong \varprojlim_{w\to \infty} H_*(Q_w).
	 \end{equation}

	Next, for each $v\in \mathbb{Z}_{\geq -1}$, consider another subcomplex $D^v$ of $CW_{*}(L_0,L_1)$ defined by
	\begin{equation*}
  D^v = \begin{cases} 0 & (v=-1),\\ 
  \prod_{u=0}^{v} CF^{n-*} (L_0,L_1;H_{\tau_{-u}}) \oplus \prod_{u=0}^{v} CF^{n-1-*}(L_0,L_1;H_{\tau_{-u}}) {q}^{\vee} & (v \geq 0).\end{cases}
	\end{equation*}
	Then, we consider the quotient complex
	$$Q_w^v = CW_*(L_0,L_1)/ (C_w + D^v).$$

	Since $C_w+D^v$ is a subcomplex of $C_w +D^{v+1}$ for each $-1 \leq v <v+1<w$, there is a quotient map
	$$\phi_w^v: Q_w^v \to Q_w^{v+1}$$
	by the subcomplex $(C_w + D^{v+1})/(C_w +D^v)$ of $Q_w^v$.
	The quotient map $\phi_w^v$ is a quasi-isomorphism since the subcomplex $(C_w + D^{v+1})/(C_w +D^v) \cong D^{v+1}/D^v \cong \mathrm{Cone} ( \mathrm{Id} : CF^{n-*}(L_0,L_1;H_{\tau_{-(v+1)}}) \to  CF^{n-*}(L_0,L_1;H_{\tau_{-(v+1)}}))[-1]$ is acyclic.
		
	As a result, we have a sequence of quasi-isomoprhisms
	$$ Q_w=Q_w^{-1} \cong \cdots \cong Q_w^{w-1} \cong CF^{n-*}(L_0,L_1;H_{\tau_{-w}}).$$
	Let us denote by $\phi_w$ the resulting quasi-isomorphism
	$$\phi_w : Q_w \to CF^{n-*}(L_0,L_1;H_{\tau_{-w}}).$$
	Now observe that the following diagram commutes up to a chain homotopy:
	$$ \xymatrix@C45pt{Q_{w+1}\ar[r]\ar[d]_{\phi_{w+1}} & Q_{w} \ar[d]^{\phi_w}\\ CF^{n-*}(L_0,L_1;H_{\tau_{-(w+1)}}) \ar[r]^{\quad c_{-(w+1)}\quad}  &  CF^{n-*}(L_0,L_1;H_{\tau_{-w}}).}$$
	This observation combined with \eqref{eq:homologyinverselimit} proves the assertion.
\end{proof}
	\begin{rmk}
		We remark that our degree convention for wrapped Floer homology is different from that in \cite{ggv22}. We adopt this convention to have a duality between $HW^k (L_0,L_1)$ and $HW_k(L_1,L_0)$ for admissible Lagrangians $L_0$, $L_1$ and an integer $k \in \mathbb{Z}$. This will be discussed in Lemma \ref{lem:wrappednondegenerate} in more datails.
	\end{rmk}

\subsection{Rabinowitz wrapped Fukaya category}\label{subsection:rabinowitz}
We follow the construction in \cite{ggv22} in the sense that we define the {\em Rabinowitz (wrapped) Floer complex} of two given admissible Lagrangians $L_0$ and $L_1$ by the cone of a continuation map from $CW_{n-*}(L_0,L_1)$ to $CW^*(L_0,L_1)$. Namely, for a given two admissible Lagrangians $L_0$ and $L_1$ in the collection $\{L_i\}_{i\in I}$, $RFC^*(L_0,L_1)$ is given by
$$RFC^*(L_0,L_1) = \mathrm{Cone} ( c: CW_{n-*}(L_0,L_1) \to CW^*(L_0,L_1)).$$
However, our approach is different from theirs in that our Floer complexes $CW_{n-*}(L_0,L_1)$ and $CW^*(L_0,L_1)$ are defined using linear Hamiltonians and taking the homotopy direct/inverse limit as in \cite{abo-sei10} and \cite{ven18}.

We will further construct $A_\infty$-products on the Rabinowitz Floer complexes following the idea of \cite{abo-sei10,ggv22}. Indeed, in \cite{abo-sei10}, the authors constructed the wrapped Floer complexes using the homotopy direct limit and defined $A_{\infty}$-products on those by considering stable popsicle maps.

To be more concrete, our main goals of this subsection are
\begin{enumerate}
	\item to construct the Rabinowitz Floer complex $RFC^*(L_0,L_1)$ for any pair $(L_0,L_1)$ of admissible Lagrangians, and
	\item for any integer $k\geq 2$ and any tuple $(L_0,\dots,L_k)$ of admissible Lagrangians, to construct a product structure
	\begin{equation*}
		\widecheck{\mu}^k : RFC^*(L_{k-1}, L_k)\otimes \dots \otimes RFC^*(L_0,L_1) \to RFC^*(L_0,L_k)[2-k]
	\end{equation*}
	satisfying the $A_{\infty}$-equation and an additional property that natural inclusions $CW^* \hookrightarrow RFC^*$ induce an $A_{\infty}$-homomorphism, which will be discussed in Theorem \ref{thm:ainfinity}.
\end{enumerate}
 Assuming that these two have been already done, we define the Rabinowitz (wrapped) Fukaya category $\mathcal{RW}_b(M)=\mathcal{RW}(M)$ of $M$ to be an $A_{\infty}$-category whose objects are admissible Lagrangians of $M$ in $\{L_i\}_{i\in I}$, morphism spaces are given by $RFC^*$ and $A_{\infty}$-products are given by $\widecheck{\mu}^k$.



\subsubsection{Rabinowitz Floer complex}
Let us first construct the Rabinowitz Floer complex. Let $L_0$ and $L_1$ be admissible Lagrangians chosen from the collection $\{L_i\}_{i \in I}$. We assume that the wrapped Floer complexes $CW_*(L_0,L_1)$ and $CW^*(L_0,L_1)$ are defined for the sequence $\{\tau_w \}_{w\in \mathbb{Z}}$ of real numbers chosen in Lemma \ref{lem:existenceoffloerdata}.

Let
$$ \iota : CF^*(L_0,L_1;H_{\tau_1})\to CW^*(L_0,L_1),$$
be the inclusion map and let
\begin{equation}\label{eq:projection}
	\pi : CW_{n-*}(L_0,L_1) \to CF^*(L_0,L_1; H_{\tau_0}),
\end{equation}
be the projection map (or the quotient map onto $Q_0 \cong CF^*(L_0,L_1;H_{\tau_0})$).
If we endow $CF^*(L_0,L_1;H_{\tau_w})$ with a new differential $\delta'_w$ given by
$$ \delta'_w(x) = (-1)^{\deg x} \delta_w(x)$$
for all homogeneous elements $x$, then both $\iota$ and $\pi$ are chain maps.

Moreover, the continuation from $CF^*(L_0,L_1;H_{\tau_0})$ to $CF^*(L_0,L_1;H_{\tau_1})$ defined precisely in 
\eqref{eq:continuation}
$$ c_{\tau_0, \tau_1} : CF^*(L_0,L_1; H_{\tau_0}) \to CF^*(L_0,L_1 ; H_{\tau_1})$$ 
is a chain map with respect to the new differential map.

Then we consider a chain map
\begin{equation*}
	c: CW_{n-*}(L_0,L_1) \to CW^*(L_0,L_1)
\end{equation*}
given by
$$c(b) =  (-1)^{\deg b +1} \iota \circ c_{\tau_0,\tau_1} \circ \pi(b)$$
for all homogeneous elements $b \in CW_{n-*}(L_0,L_1)$. Here, the degree is defined by $\deg b = k$ for all nonzero elements $b \in CW_{n-k}(L_0,L_1)$ for $k\in \mathbb{Z}$.

Since all the factors of $c$ are chain maps, so is $c$. This fits into the following commutative diagram:
\begin{equation*}
	\xymatrix@C60pt{CW_{n-*}(L_0,L_1) \ar[r]^{\quad c} \ar[d]_{\pi}& CW^*(L_0,L_1) \\ CF^{*}(L_0,L_1;H_{\tau_0}) \ar[r]_{(-1)^{\deg (\cdot) +1}c_{\tau_0,\tau_1} }& CF^*(L_0,L_1;H_{\tau_1}). \ar[u]_{\iota}  }
\end{equation*}

As suggested in \cite{ggv22}, we define the Rabinowitz Floer complex $RFC^*(L_0,L_1)$ by the mapping cone
\begin{equation}\label{eq:rabinowitzcomplex}
	RFC^*(L_0,L_1) = \mathrm{Cone} ( c: CW_{n-*}(L_0,L_1) \to CW^*(L_0,L_1)).
\end{equation}
Since the Rabinowitz Floer complex is defined by the mapping cone of a chain map, it naturally inherits a differential, which we will denote by $\widecheck{\mu}^1$. In other words, identifying $RFC^*(L_0,L_1) = CW_{n-*}(L_0,L_1)[1] \oplus CW^*(L_0,L_1)$ as vector spaces, the differential $\widecheck{\mu}^1$ is represented by the following matrix:
\begin{equation}\label{eq:rabinowitzdifferential}
	\widecheck{\mu}^1 = \begin{pmatrix} -\partial & 0\\ c & \delta \end{pmatrix},
\end{equation}
where $\partial$ and $\delta$ are as in \eqref{eq:differential2} and \eqref{eq:differential1}.

Then we define the {\em Rabinowitz Floer cohomology} ${RFH}^*(L_0,L_1)$ of $L_0$ and $L_1$ by its cohomology:
\begin{equation*}
	RFH^*(L_0,L_1) = H^*(RFC^*(L_0,L_1), \widecheck{\mu}^1).
\end{equation*}

As a consequence, there exists a short exact sequence of chain complexes
\begin{equation}\label{eq:ses}
	0\to CW^{*}(L_0,L_1)\xrightarrow{i} RFC^*(L_0,L_1) \xrightarrow{{p}} CW_{n-1-*}(L_0,L_1) \to 0.
\end{equation}
This induces the long exact sequence \eqref{eq:openles}:
\begin{equation*}
	\cdots \to HW_{n-*} (L_0,L_1) \to HW^*(L_0,L_1) \to {RFH}^*(L_0,L_1) \to HW_{n-1-*}(L_0,L_1) \to HW^{*+1}(L_0,L_1) \to \cdots.
\end{equation*}

\subsubsection{Moduli spaces of popsicles}\label{subsection:popsicle}
In order to construct higher products $\widecheck{\mu}^k$ on Rabinowitz Floer complexes, we make use of the notion of popsicle, which was first introduced in \cite{abo-sei10}. Let us recall it following \cite[Section 2.3 and Section 2.4]{abo-sei10}.

Let $k$ be a positive integer. Then let $F$ be a finite set and let $p: F \to \{1,\dots, k\}$ be a function, which we call a {\em flavor}.
{\em A $p$-flavored popsicle} $P = (S, \mathbf{\phi})$ consists of
\begin{itemize}
\item the unit disk $S$ (equipped with the standard complex structure) with $(k+1)$-boundary punctures $z_0,\dots,z_k$ which are cyclically ordered counterclockwise, and one distinguished puncture $z_0$ called a {\em root} and
\item a family $\mathbf{\phi} = \{ \phi_f \}_{f \in F}$ of holomorphic embeddings $\phi_f : S  \to \R \times [0,1]$, each of which extends to a bi-holomorphism $\overline{\phi}_f : \overline{S} \setminus \{z_0,z_{p(f)}\} \to \R \times [0,1]$ sending $z_0$ to $-\infty$ and $z_{p(f)}$ to $\infty$, respectively.
\end{itemize}

Let $P= (S,\phi = \{ \phi_f\}_{f \in F})$ be a $p$-flavored popsicle. For each $f\in F$, a holomorphic embedding $\phi_f : S \to \R \times [0,1]$ that extends to a bi-holomorphism $\overline{\phi}_f : \overline{S} \setminus \{z_0, z_{p(f)}\} \to \R \times [0,1]$ 
as described above is uniquely determined by the pre-image $\phi_f^{-1} (0, \frac{1}{2}) \in S$. The pre-image $\phi_f^{-1}(\R \times \{\frac{1}{2}\})$ is the geodesic between $z_0$ and $z_{p(f)}$ with respect to the hyperbolic metric on the interior of $S$ regarded as the Poincar\'{e} disk, which we will call the {\em popsicle stick} associated to $f$. Consequently, the pre-image $\phi_f^{-1} (0,\frac{1}{2})$ lies on the popsicle stick associated to $f$. Conversely, every point on the geodesic between $z_0$ and $z_{p(f)}$ uniquely determines such a holomorphic embedding $\phi_f : S \to \R \times [0,1]$. In conclusion, the popsicle $P=(S,\phi)$ can be understood as a unit disk with $(k+1)$-boundary punctures equipped with interior markings $\{s_f\}_{f\in F}$, each of which lies on the geodesic between $z_0$ and $z_{p(f)}$. We call such an interior marking $s_f$ a {\em sprinkle}. This description justifies that a popsicle $P = (S,\phi = \{ \phi_f\}_{f \in F})$ with $(k+1)$-boundary punctures is called {\em stable} if $k+ |F| \geq 2.$ See Figure \ref{figure1} for an example of a popsicle.
\begin{center}
\begin{figure}[h!]\label{figure1}
  \includegraphics[width=2in]{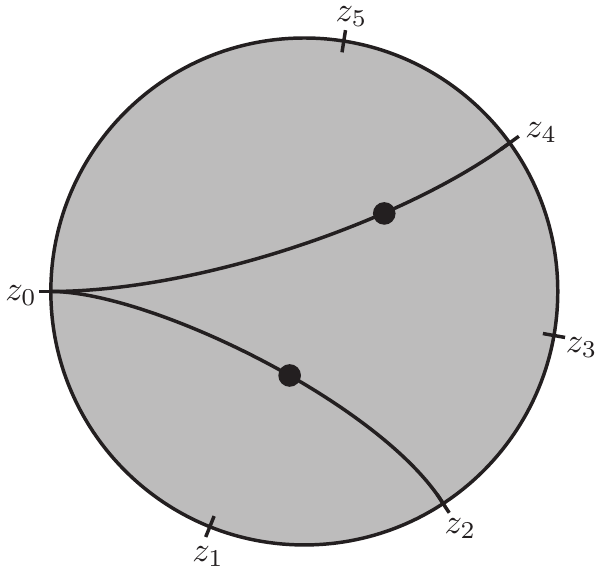}
  \caption{An element of $\mathcal{R}^{6, \{2,4\}}$}
  \label{figure1}
\end{figure}
\end{center}

For a given flavor  $p:F \to \{1,\dots,k\}$, we say that two $p$-flavored popsicles $P=(S,\phi = \{\phi_f\}_{f})$ and $P'=(S',\phi'= \{ \phi'_f\}_{f})$ are isomorphic if there is a bi-holomorphism $\psi : S \to S'$ such that $\phi'_f = \psi \circ \phi_f$ for all $f\in F$.  Let us denote by $\mathcal{R}^{k+1, p}$ the moduli space of stable $p$-flavored popsicles. 

We consider the subgroup of the permutation group of $F$ preserving the flavor $p$:
	\begin{equation*}\label{eq:symmetry}
		\mathbf{Sym}^p =\{ s: F \to F| s \text{ is a bijection satsifying } p \circ s =p \}.
	\end{equation*}
Note that the group $\mathbf{Sym}^p$ is trivial if and only if $p$ is injective or equivalently there is at most one sprinkle on each popsicle stick. The group $\mathbf{Sym}^p$ acts on $\mathcal{R}^{k+1, p}$ by
\begin{equation}\label{eq:action}
s \cdot (S, \{\phi_f\}_{f\in F}) = (S, \{\phi_{s(f)}\}_{f\in F}), \forall s \in \mathbf{Sym}^p, (S,\{\phi_f\})\in \mathcal{R}^{k+1,p}.
\end{equation}
In terms of sprinkles, this action permutes sprinkles on the same popsicle stick.

The moduli space $\mathcal{R}^{k+1,p}$ admits a compactification $\overline{\mathcal{R}}^{k+1,p}$ obtained by adding broken stable popsicles as argued in \cite[Section 2.3 and Section 2.4]{abo-sei10}. To describe it, let $(T,\mathbf{p})$ be a pair consisting of
\begin{itemize}
\item a rooted ribbon tree $T$ with $(k+1)$ semi-infinite ends and
\item a collection $\mathbf{p} = \{ p_v\}_{v \in V(T)}$ of flavors $p_v : F_v \to \{1,\dots, \mathrm{val}(v) -1\}$, where $F = \bigsqcup_{v \in V(T)} F_v$ is a partition of $F$ such that 
\begin{equation}\label{eq:stability}
\mathrm{val}(v)  + |F_v| \geq 3
\end{equation}
for every vertex $v$ of $T$. Here $V(T)$ is the set of vertices of $T$ and $\mathrm{val}(v)$ is the valency of the vertex $v$ in $T$.
\end{itemize}
The set $E(T)$ of edges of $T$ is the union of the set $E^{\mathrm{fin}}(T)$ of finite edges of $T$ and the set $E^{\mathrm{inf}}(T)$ of semi-infinite ends of $T$. Let us label the set $E^{\mathrm{inf}}(T)$ of semi-infinite ends (edges) of $T$ by $\{0,\dots, k\}$, where $0$ is distinguished to be a root and the other semi-infinite ends are ordered according to the cyclic order determined by the ribbon structure on $T$. We call a pair $(v,e)$ consisting of a vertex $v\in V(T)$ and an adjacent edge $e\in E(T)$ a {\em flag}. For each $v\in V(T)$, there are exactly $\mathrm{val}(v)$ flags that involve $v$, each of which corresponds to an edge of $T$ adjacent to $v$. Unless specified otherwise, we denote the set of the edges of $T$ adjacent to $v$ by $\{0,\dots, \mathrm{val}(v)-1\}$, where $0$ is the unique edge toward the root of $T$ and the others are ordered according to the cyclic order determined by the ribbon structure on $T$.
 
Now we associate a {\em broken stable popsicle} to each tuple $(P_v)_{v\in V(T)} \in \prod_{v\in V(T)} \mathcal{R}^{\mathrm{val}(v), p_v}$ of $p_v$-flavored popsicles $P_v=(S_v, \phi_v = \{ \phi_{v,f}\}_{f \in F_v})$ with $\mathrm{val}(v)$ boundary punctures $\{z_{v,0}, \dots, z_{v,\mathrm{val}(v)-1}\}$ in the following three steps:
\begin{enumerate}
\item Take the disjoint union of the closures $\overline{S}_v$ for all $v\in V(T)$.
\item For each finite edge $e$ involved in two distinct flags $(u,e)$ and $(v,e)$ for some vertices $u,v$ of $T$, glue two closed disks $\overline{S}_u$ and $\overline{S}_v$  by identifying the boundary point $z_{u,i(e)}$ of $\overline{S}_u$ and that $z_{v,j(e)}$ of $\overline{S}_v$, where $i(e) \in \{0,\dots,\mathrm{val}(u)-1\}$ and $j(e) \in \{0,\dots,\mathrm{val}(v)-1\}$ are the indices corresponding to the flags $(u,e)$ and $(v,e)$, respectively.
\item For each semi-infinite edge $e$ involved in a flag $(v,e)$ for some vertex $v$ of $T$, remove the boundary point of $\overline{S}_v$ corresponding to the flag.
\end{enumerate}
The condition \eqref{eq:stability} is required for the popsicle $P_v$ corresponding to the vertex $v$ to be stable. The broken stable popsicle obtained this way defines an element of $\mathcal{R}^{T,\mathbf{p}}$. The notion of isomorphism between two broken stable popsicle can be defined analogously to that of isomorphism between two stable popsicles.

As seen above, a stable popsicle is nothing but a disk with interior markings moving along geodesics between the root and another boundary puncture, which is stable in the sense the number of special points on it is greater than or equal to 3. Having this in mind, every broken stable popsicle can be understood as an element of the Deligne--Mumford compactification of $\mathcal{R}^{k+1,p}$.
Therefore, $\mathcal{R}^{k+1,p}$ can be compactified as follows:
\begin{equation*}
	\overline{\mathcal{R}}^{k+1,p} = \bigsqcup_{(T,\mathbf{p})} \mathcal{R}^{T,\mathbf{p}},
\end{equation*}
where the sum runs over all pairs $(T,\mathbf{p})$ given as above. Indeed, for each pair $(T,\mathbf{p})$ given as above, $\mathcal{R}^{T,\mathbf{p}}$ forms a stratum of $\overline{\mathcal{R}}^{k+1,p}$ of codimension $|V(T)|-1 =|E^{\mathrm{fin}}(T)|$. Moreover, for any such a pair $(T,\mathbf{p})$, there is an open set $\mathcal{U}^{T,\mathbf{p}} \subset \mathcal{R}^{T,\mathbf{p}} \times \prod_{e \in E^{\mathrm{fin}}(T)} (-1,0]$ and a gluing map
\begin{equation*}
	\mathcal{U}^{T,\mathbf{p}} \xrightarrow{\mathrm{glue}^{T,\mathbf{p}}} \overline{\mathcal{R}}^{k+1,p}
\end{equation*}
identifying 
$$ \mathcal{R}^{T,\mathbf{p}} \times \prod_{e \in E^{\mathrm{fin}}(T)} \{0\} \xrightarrow{\simeq} \mathcal{R}^{T,\mathbf{p}} \subset \overline{\mathcal{R}}^{k+1,p}.$$\\

\begin{figure}[h]
\includegraphics[scale=0.5]{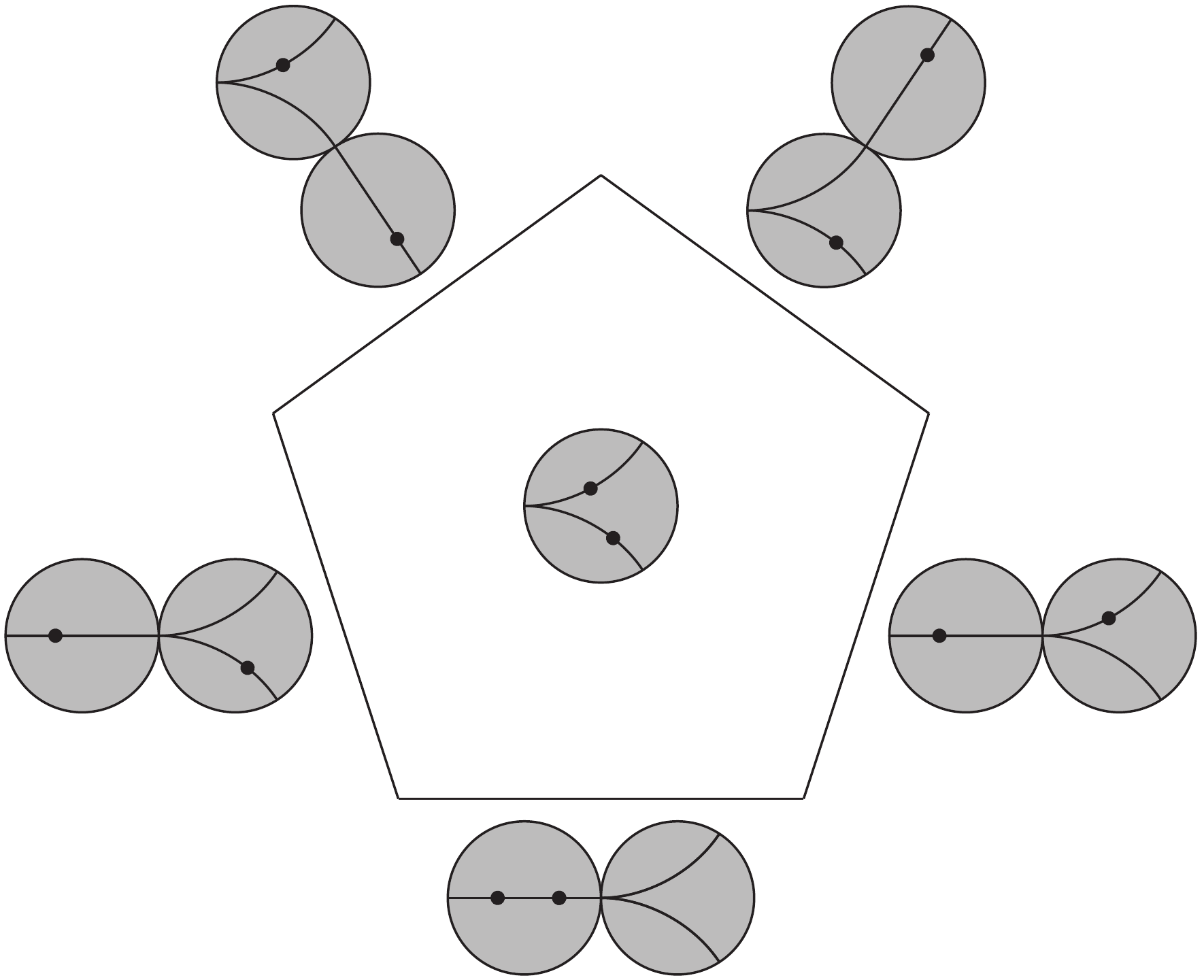}
\caption{The compactified popsicle moduli space $\overline{\mathcal{R}}^{3, \{1,2\}}$}
\centering
\label{fig:popdegen}
\end{figure}

When we define the higher products $\widecheck{\mu}^k$ in the next subsection, we only make use of popsicles, for which $\mathbf{Sym}^p$ is trivial or equivalently, $p : F \to \{1,\dots, k\}$ is injective. If a flavor $p : F \to \{1,\dots,k\}$ is injective, then we may regard $F$ as a subset of $\{1,\dots, k\}$ by assuming that $p$ is an inclusion. This way, for each subset $F$ of $\{1,\dots, k\}$, we can consider the notion of $F$-flavored popsicle. For each subset $F \subseteq \{1,\dots, k\}$, we denote by $\mathcal{R}^{k+1,F}$ the moduli space of $F$-flavored popsicles. The dimension of the moduli space $\mathcal{R}^{k+1,F}$ is given by
$$ k -2 + |F|.$$
This can be deduced from the fact that each of boundary punctures and sprinkles contributes to the dimension by $1$ and the dimension of the group of automorphisms of the disk is $3$.

For later use, let us look into the codimension-one strata of $\overline{\mathcal{R}}^{k+1,F}$ for each subset $F$ of $\{1,\dots, k\}$ more closely. As observed above, the codimension-one strata of $\overline{\mathcal{R}}^{k+1,F}$ are given by the disjoint union of $\mathcal{R}^{T, \mathbf{p}}$ for some rooted ribbon tree $T$ with exactly two vertices $\{v_0,v_1\}$, one finite edge $e$ and some family $\mathbf{p} = \{p_{v_0},p_{v_1}\}$ of flavors $p_{0} :F_{v_0}  \to \{1,\dots, \mathrm{val}(v_0)-1 \}$, $p_{1} : F_{1} \to \{1,\dots,\mathrm{val}(v_1)-1 \}$ such that \eqref{eq:stability} holds for both $v_0$ and $v_1$. Here $v_0$ is the vertex closest to the root $0$ in $T$.

To be more concrete, suppose that there is a sequence $\{P^m = (S^m,\phi^m) \}_{m \in \Z_{\geq 1}}$ of elements of $\mathcal{R}^{k+1,F}$ that converges to a broken stable popsicle $(P_{v_0}, P_{v_1}) \in \mathcal{R}^{T, \mathbf{p}}$. 
For convenience, for each $m \in \Z_{\geq 1}$, we denote the $(k+1)$ boundary punctures on $S^m$ by $\{z^m_{0}, \dots, z^m_{k}\}$ and denote $|F|$ sprinkles on $S^m$ by $\{s^m_{f}\}_{f\in F}$. The convergence means that there are some $0\leq i <j \leq k$ and a subset $F_{1} \subseteq \{i+1,\dots, j\} \cap F$ such that the consecutive boundary punctures $\{z^m_{i+1}, \dots, z^m_{j}\}$ and the sprinkles $\{s^m_{f}\}_{f\in F_{1}}$  get closer as $m$ goes to infinity so that eventually
\begin{itemize}
\item a stable popsicle $P_{v_1} = (S_{v_1}, p_{v_1})$ with $\mathrm{val}(v_1) = j - i +1$-boundary punctures bubbles off and
\item another stable popsicle $P_{v_0} = (S_{v_0}, p_{v_0})$ with $\mathrm{val}(v_0) = k +2 - (j-i)$-boundary punctures is left.
\end{itemize}
Consequently, it makes sense to denote the set of boundary punctures of $P_{v_1}$ by $\{0,i+1,\dots,j\}$ and that of $P_{v_0}$ by $\{0,1,\dots,i,*,j+1,\dots,k\}$, where $*$ is the unique boundary puncture of $P_{v_0}$ connected to the root $0$ of $P_{v_1}$. Moreover, we may regard $F_{1}$ as a subset of $F$ and we may put $F_{0} = F \setminus F_{1}$. 

We deduce that, for every $f\in F_{1}$, the corresponding sprinkle remains in $P_{v_1}$, while, for every $f\in (\{i+1,\dots, j\} \cap F) \setminus F_{1} = \{i+1,\dots, j\}\cap F_{0}$, the corresponding sprinkle moves to $P_{v_0}$. It follows that every geodesic in $S_{v_1}$ between the root and another boundary puncture of $P_{v_1}$ has at most one sprinkle on it. However, for the popsicle $P_{v_0}$, if more than one sprinkle move to $P_{v_0}$ when bubbling off, or equivalently if $|p_{v_0}^{-1}(*)|=|\{i+1,\dots, j\} \cap F_{0}|\geq 2$, then the geodesic in $S_{v_0}$ between the root and the boundary puncture $*$ toward $P_{v_1}$ has more than one sprinkles. But, if $|p_{v_0}^{-1}(*)|=|\{i+1,\dots, j\} \cap F_{0}| \leq 1$, then every geodesic in $S_{v_0}$ between the root and another boundary puncture has at most one sprinkle and hence the associated flavor $p_{v_0}$ is injective as in the case of $P_{v_1}$.

As a consequence, there are different types of codimension-one strata according to the number of elements of $\{i+1,\dots,j\}\cap F_{0} = (\{i+1,\dots,j\}\cap F)\setminus F_1$. As seen above, if $|\{i+1,\dots,j\}\cap F_{0}| \leq 1$, then the associated flavor $p_{v_0}$ is injective. In particular, if $|\{i+1,\dots,j\}\cap F_{0}| = 0$, then $p_{v_0}^{-1}(*) = \emptyset$ and hence $F_0 \subset \{0,1,\dots,i,j+1,\dots, k\}$. If $|\{i+1,\dots,j\}\cap F_{0}| = 1$, then $p_{v_0}^{-1}(*) = \{l\}$ for the unique element $l \in \{i+1,\dots, j\}\cap F_0$. In either case, we may identify $\mathcal{R}^{\mathrm{val}(v_0),p_{v_0}}$ with $\mathcal{R}^{\mathrm{val}(v_0), F_{0}}$. Otherwise, if $|\{i+1,\dots,j\}\cap F_{0}| \geq 2$, then the associated flavor $p_{v_0}$ is not injective and we leave the notation $\mathcal{R}^{\mathrm{val}(v_0),p_{v_0}}$ as it is. On the other hand, since $p_{v_1}$ is always injective, we always identify $\mathcal{R}^{\mathrm{val}(v_1),p_{v_1}}$ with $\mathcal{R}^{\mathrm{val}(v_1),F_{1}}$. 
In conclusion, the codimension-one strata of $\mathcal{R}^{k+1,F}$ are divided into the following two families according to whether $|\{i+1,\dots,j\}\cap F_{0}|\leq 1$ or $|\{i+1,\dots,j\}\cap F_{0}| \geq 2$:
\begin{equation}\label{eq:codimension1strata1}
	\begin{split}
		\bigcup_{0 \leq i < j \leq k} \bigcup_{ |(\{i+1,\dots, j\} \cap F) \setminus F_{1}|\leq 1} \mathcal{R}^{k+2-(j-i),F\setminus F_{1}} \times \mathcal{R}^{j-i+1,F_{1}}
	\end{split}
\end{equation}
and
\begin{equation}\label{eq:codimension1strata2}
\begin{split}
	\bigcup_{0\leq i< j \leq k} \bigcup_{|(\{i+1,\dots, j\} \cap F) \setminus F_{1}|\geq 2} \mathcal{R}^{k+2 - (j-i),p_{v_0}} \times \mathcal{R}^{j-i+1,F_{1}}.
\end{split}
\end{equation}
Here, the above unions are taken for subsets $F_1 \subset \{i+1,\dots, j\} \cap F$ satisfying the stability conditions \eqref{eq:stability}
\begin{equation}\label{eq:stability2}
	k+2-(j-i) + |F\setminus F_1| \geq 3 \text{ and } j-i + |F_1| \geq 3.
\end{equation}

\subsubsection{$A_{\infty}$-structure on Rabinowitz Floer complex}
Let us turn into the construction of higher products $\widecheck{\mu}^k$ on Rabinowitz Floer complexes following the idea of \cite{abo-sei10,cie-oan20}. Let $k \in \mathbb{Z}_{\geq 1}$ and let $p : F\to \{1,\dots, k\}$ be a flavor such that $k+|F| \geq 2.$ For any tuple ${\bf w} =(w_0, w_1,\dots, w_k) $ of integers such that
\begin{equation}\label{eq:weight}
	w_0 = w_1 + \dots +w_k+|F|,
\end{equation}
a {\em $\mathbf{w}$-weighted popsicle} is a $p$-flavored popsicle whose $i$-th boundary puncture is labeled with $w_i$. We consider the moduli $\mathcal{R}^{k+1,p, {\bf w}}$ of weighted popsicles, which is just a copy of $\mathcal{R}^{k+1,p}$. As done in Subsection \ref{subsection:popsicle}, in case $p$ is injective, we will denote the corresponding moduli space by $\mathcal{R}^{k+1,F,\mathbf{w}}$.

There is a universal and consistent choice of strip-like ends $\{ \epsilon_m = \epsilon_m^{P}\}_{P \in \mathcal{R}^{k+1,p}}$ for all pair $(k+1, {p})$ in such a way that the images of $\epsilon_m^{P}$ are mutually disjoint and those do not contain any sprinkles for all $P$. Here $\epsilon_{0}^P$ denotes the negative strip-like end on the popsicle $P$ and $\epsilon_{m}^P$ denotes the $m$-th positive strip-like end for $m =1, \dots, k$.

Now we consider Floer data for $\mathcal{R}^{k+1,p, {\bf w}}$. For the following definition of Floer data with Lagrangian labels, note that the moduli $\mathcal{R}^{2,\{1\}}$ has a unique element $P_w = (S_0,\phi_0)$ and hence so does $\mathcal{R}^{2,\{1\},(w+1,w)}$ for any integer $w$. Here $S_0$ is the infinite strip $\mathbb{R} \times [0,1]$.

\begin{dfn}[{\em Floer data with Lagrangian labels}]\label{dfn:floerdata} Let Floer data associated to Lagrangian pairs be given as in Definition \ref{dfn:floerdataforlagrangian}.
	\begin{enumerate}
		\item For any integer $w$, a Floer datum with Lagrangian labels on $\mathcal{R}^{2,\{1\},(w+1,w)} = \{P_w\}$, which is compatible with the chosen Floer data associated to Lagrangian pairs, consists of 
		\begin{itemize}
			\item a pair $(L_0,L_1)$ of admissible Lagrangians chosen from $\{L_i\}_{i \in I}$,
			\item a domain-dependent Hamiltonian $H^{P_w} :S_0 \to \mathcal{H}(M),z\mapsto H^{P_w}_z$ on $\widehat{M}$ and
			\item a domain-dependent almost complex structure $I^{P_w} : S_0 \to \mathcal{J}(M),z\mapsto I^{P_w}_z$ on $\widehat{M}$
		\end{itemize}
		such that
		\begin{enumerate}[label=(\alph*)]
			\item $H^{P_w}_{(s,t)}$ is independent of $t \in [0,1]$,
			\item $H^{P_w}_{(s,t)} = \begin{cases} H_{\tau^{(L_0,L_1)}_{w+1}} & (s\leq  -1), \\  H_{\tau^{(L_0,L_1)}_w} & (s\geq 1), \end{cases}$
			\item $\partial_s H^{P_w}_{(s,t)} \leq 0$,
			\item $I^{P_w}_{(s,t)} = \begin{cases} J^{(L_0,L_1)}_{w+1}(t) & (s\leq -1), \\ J^{(L_0,L_1)}_w(t) &(s\geq 1),\end{cases}$		
		\end{enumerate}
		where
		\begin{align*}
			\mathcal{H}(M) &= \{  H | H \text{ is an admissible Hamiltonian on } \widehat{M}\} \text{ and }\\
		\mathcal{J}(M) &=  \{ J \,| \,J \text{ is an } \omega \text{-compatible complex structure of contact type on } \widehat{M} \}.
		\end{align*}

		\item For any $k \in \mathbb{Z}_{\geq 2}$, a flavor $p : F \to \{1,\dots,k\}$ and a tuple ${\bf w}= (w_0,w_1,\dots,w_k)$ of integers satisfying \eqref{eq:weight}, Floer data with Lagrangian labels on $\mathcal{R}^{k+1, p, {\bf w}}$, which is compatible with the chosen Floer data associated to Lagrangian pairs, consists of
		\begin{itemize}
			\item a tuple $(L_0,L_1, \dots, L_k)$ of admissible Lagrangians chosen from $\{L_i\}_{i \in I}$,
			\item a smooth family $\{H^P\}_{P = (S,\phi) \in \mathcal{R}^{k+1,p, {\bf w}}}$ of  domain-dependent Hamiltonians,
			\item a smooth family $\{I^P\}_{P = (S,\phi) \in \mathcal{R}^{k+1,p, {\bf w}}}$ of domain-dependent almost complex structures and
			\item a smooth family $\{\beta^P\}_{P = (S,\phi) \in \mathcal{R}^{k+1,p, {\bf w}}}$ of a 1-form $\beta \in \Omega^1(S)$
		\end{itemize}
		such that
		\begin{enumerate}[label=(\alph*)]
			\item $H^{P}_{\epsilon_j (s,t)} = H_{\tau^{L_{j-1},L_j}_{w_j}}$ for $j=0,1,\dots,k$,
			\item $\beta^P|_{\partial S} = 0$,
			\item $\epsilon_j^* \beta^P = dt$, for $j=0,1,\dots,k$,
			\item 
			For all $x \in [2,\infty) \times \partial M \subset \widehat{M}$,
			\begin{equation}\label{eq:formaximumprinciple}
				d_S ( H^P(\cdot,x) \beta^P) \leq 0,
			\end{equation} 
			where $d_S$ is the differential with respect to the domain $S$,
			\item $I^P_{\epsilon_j (s,t)} = J^{L_{j-1},L_j}_{w_j}(t)$ for $j=0,1,\dots,k$ and
			\item $H^P, I^P$ and $\beta^P$ are invariant under the action of $\mathbf{Sym}^p$ \eqref{eq:action} in the sense that
				$$ H^P = H^{s\cdot P}, I^P = I^{s\cdot P} \text{ and } \beta^P = \beta^{s\cdot P}, \forall s\in \mathbf{Sym}^p, P \in \mathcal{R}^{k+1,p,\mathbf{w}}.$$
		\end{enumerate}
	\end{enumerate}
\end{dfn}

\begin{rmk}
	\mbox{}
	\begin{enumerate}
		\item
		Although the first content (1) of Definition \ref{dfn:floerdata} can be regarded as a special case of (2), we state it here separately since the Floer data discussed in (1) are those used to define the continuation maps.
		\item The requirement (d) $\tau_w + \tau_v \leq \tau_{w+v}$, $w,v \in \mathbb{Z}$ in Definition \ref{dfn:floerdataforlagrangian} is necessary to ensure that the conditions (a), (b), (c) and (d) of (2) in Definition \ref{dfn:floerdata} are compatible for $F =\emptyset$.
	\end{enumerate}
\end{rmk}
One may notice that our Floer data are different from those in \cite{abo-sei10} in that we choose a family of Hamiltonians rather than choosing basic 1-forms and sub-closed 1-forms as a part of Floer data. Even so, a standard argument can be applied to showing that the above Floer data can be chosen in a consistent way in the sense that it is compatible with gluing maps in the sense discussed in \cite[Section (9i)]{sei08}. Indeed we have

\begin{thm}\label{thm:floerdata}
	There exists a universal and consistent choice of Floer data with Lagrangian labels on $\mathcal{R}^{k+1, p, {\bf w}}$ for all $k \geq 1$, all flavors $p$ and all weights ${\bf w}$ satisfying \eqref{eq:weight}, which is compatible with given Floer data associated to Lagrangian pairs.
\end{thm}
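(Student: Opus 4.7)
The plan is to induct on the complexity $k + |F|$, which (up to an additive constant) governs the dimension of $\mathcal{R}^{k+1,p,\mathbf{w}}$. The base case $k+|F|=2$ consists essentially of the singletons $\mathcal{R}^{2,\{1\},(w+1,w)}$ together with $\mathcal{R}^{2,\emptyset,(w_0,w_1)}$ for $w_0 \geq w_1$. For $\mathcal{R}^{2,\{1\},(w+1,w)}$, I would choose a Hamiltonian $H^{P_w}$ on the strip with $\partial_s H^{P_w} \leq 0$ interpolating between $H_{\tau_{w+1}}$ at $s=-\infty$ and $H_{\tau_w}$ at $s=+\infty$, which exists because $\tau_w < \tau_{w+1}$ by condition (b) of Lemma \ref{lem:existenceoffloerdata}. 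I would similarly choose a smooth family of contact-type $\omega$-compatible almost complex structures interpolating between $J^{(L_0,L_1)}_{w+1}$ and $J^{(L_0,L_1)}_w$, using that $\mathcal{J}(M)$ is contractible, and take $\beta^{P_w} = dt$.

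For the inductive step, fix $(k,p,\mathbf{w})$ with $k+|F| \geq 3$ and assume Floer data have been chosen on all moduli spaces of smaller complexity. The boundary $\partial \overline{\mathcal{R}}^{k+1,p,\mathbf{w}}$ is stratified by the spaces $\mathcal{R}^{T,\mathbf{p}}$ recalled in Subsection \ref{subsection:popsicle}, and on each such stratum the Floer data are forced by the inductive hypothesis as the product of the data on the vertex factors, with weights inherited from $\mathbf{w}$ and from the combinatorics of how sprinkles and strip-like ends glue. Next I would use the gluing charts $\mathrm{glue}^{T,\mathbf{p}}$ to transport this boundary datum into a collar neighborhood of $\partial \overline{\mathcal{R}}^{k+1,p,\mathbf{w}}$, and then extend it smoothly into the interior via a partition of unity subordinate to the collar, choosing an auxiliary generic extension over the complement. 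Consistency at higher-codimension corners in the sense of \cite[Section (9i)]{sei08} is automatic because at each corner the data coming from adjacent boundary strata agree by induction.

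The main obstacle is ensuring that the sub-closedness condition (d), $d_S(H^P(\cdot,x)\beta^P) \leq 0$ on the cylindrical end $[2,\infty)\times \partial M$, survives both the gluing and the interior extension. My intended device is to factor the Hamiltonian through a single slope parameter: near $[2,\infty)\times \partial M$, set $H^P(z,(r,y)) = \sigma^P(z)(r-1)$ for a slope function $\sigma^P : S \to \mathbb{R}$ whose restriction to the $j$-th strip-like end equals $\tau_{w_j}^{L_{j-1},L_j}$, and then pick $\beta^P$ so that $d(\sigma^P \beta^P) \leq 0$ on $S$. Such a pair can be produced on each boundary stratum precisely because the weight condition \eqref{eq:weight} $w_0 = w_1+\dots+w_k + |F|$, combined with the super-additivity $\tau^{L_i,L_k}_{v+w} \geq \tau^{L_j,L_k}_{w} + \tau^{L_i,L_j}_{v}$ from condition (d) of Lemma \ref{lem:existenceoffloerdata}, ensures that the slope at the outgoing puncture dominates the sum of the slopes at the incoming punctures after accounting for the $|F|$ sprinkles. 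Sub-closedness is a convex condition, so interpolating between sub-closed choices by partition of unity and by gluing preserves it; this reduces the analytical obstacle to combinatorial bookkeeping of slopes and sprinkle counts.

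Finally, to secure the $\mathbf{Sym}^p$-equivariance required by condition (f), I would average the constructed data over the finite group $\mathbf{Sym}^p$. Since the action permutes sprinkles on the same popsicle stick and preserves the boundary stratification \eqref{eq:codimension1strata1}--\eqref{eq:codimension1strata2}, averaging Hamiltonians, contact-type almost complex structures (in the contractible space $\mathcal{J}(M)$), and $1$-forms over $\mathbf{Sym}^p$ produces another admissible Floer datum, and the boundary values are already $\mathbf{Sym}^p$-invariant by induction because the symmetry orbit of any boundary stratum is glued from the same inductive choice. Hence averaging does not disturb boundary gluing consistency, completing the inductive step and the proof.
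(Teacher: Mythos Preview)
Your approach is essentially the same as the paper's: induct over the dimension of $\mathcal{R}^{k+1,p,\mathbf{w}}$, use convexity of the spaces of admissible Hamiltonians, almost complex structures and $1$-forms to extend from the boundary strata into the interior, and average over $\mathbf{Sym}^p$ to enforce condition (f). You in fact supply considerably more detail than the paper's one-paragraph sketch, in particular the slope-function device $H^P(z,(r,y)) = \sigma^P(z)(r-1)$ and the explicit role of the super-additivity condition (d) of Lemma~\ref{lem:existenceoffloerdata} in achieving sub-closedness.

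Two small slips are worth correcting. First, your base case $k+|F|=2$ should consist of the point $\mathcal{R}^{2,\{1\},(w+1,w)}$ and the point $\mathcal{R}^{3,\emptyset,\mathbf{w}}$; the object $\mathcal{R}^{2,\emptyset,(w_0,w_1)}$ you list has $k+|F|=1$, is semi-stable, and by the weight condition~\eqref{eq:weight} would require $w_0=w_1$ rather than $w_0\geq w_1$. Second, one cannot literally average elements of $\mathcal{J}(M)$ and stay in $\mathcal{J}(M)$; the paper handles this by averaging the associated Riemannian metrics $g_J = \omega(\cdot, J\cdot)$ and then passing back to the corresponding $\omega$-compatible almost complex structure, which is the precise meaning you should give to your parenthetical ``in the contractible space $\mathcal{J}(M)$''.
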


 The proof of Theorem \ref{thm:floerdata} relies on the fact that the spaces of domain-dependent admissible Hamiltonians, $\omega$-compatible almost complex structures and 1-forms considered in Definition \ref{dfn:floerdata} are convex and hence contractible. In particular, regarding the last requirement (f), note that taking the average of admissible Hamiltonians, Riemannian metrics (corresponding to $\omega$-compatible almost complex structures) and 1-forms over the finite group $\mathbf{Sym}^p$ makes sense. Therefore a universal and consistent family of Floer data can be constructed inductively from lower-dimensional moduli spaces of popsicles to higher-dimensional moduli spaces as discussed in \cite{abo-sei10,ggv22}.

Let us fix universal and consistent Floer data with Lagrangian labels on $\mathcal{R}^{k+1,p,{\bf w}}$ for all Lagrangian labels and for all pairs $(p,{\bf w})$ consisting of a flavor $p$ and a weight ${\bf w}=(w_0,\dots,w_k)$ satisfying \eqref{eq:weight}, which are compatible with the Floer data associated to pairs of admissible Lagrangians chosen above.

Still, to ensure the transversality of the space of popsicle maps that we will discuss below, one further needs an infinitesimal deformation of the almost complex structures $\{ I^P\}_{P \in \mathcal{R}^{k+1,p,{\bf w}}}$ given as a part of the Floer data chosen above. Indeed, we need a family $\{K^P\}_{P \in \mathcal{R}^{k+1,p,{\bf w}}}$ of tangent vectors $K^P_z \in T_{I^P_z} \mathcal{J}(M)$, for which $K^P$ and its derivatives vanish on the strip-like ends of the popsicle $P$ for all popsicles $P$. For such an infinitesimal deformation, we get another family $\{ J^P\}_{P \in \mathcal{R}^{k+1,p,{\bf w}}}$ of almost complex structures obtained by taking the exponential of $K^P$ at $I^P$. We refer the readers to \cite[Section (9k)]{sei08} or \cite[Section 3.2]{abo-sei10} for more details.\\

Let us fix such an infinitesimal deformation $\{K^P\}_{P \in \mathcal{R}^{k+1,p,{\bf w}}}$ of almost complex structures. First, for some integer $w$ and a pair $(L_0, L_1)$ of admissible Lagrangians, let
$$x_0 \in \mathcal{I} (L_{0},L_{1}; H_{\tau_{w+1}}) \text{ and } x_1 \in \mathcal{I} (L_{0}, L_{1}; H_{\tau_{w}})$$
be Hamiltonian chords and let us consider the space of stable popsicle maps
\begin{equation}\label{eq:continuationmoduli}
	\mathcal{R}^{2,\{1\}, (w+1,w)}(x_0,x_1) =\{ u : \mathbb{R} \times [0,1] \to \widehat{M} \,|\, u \text{ satisfies } \mathrm{(i)}, \mathrm{(ii)}, \mathrm{(iii)} \text{ and } \mathrm{(iv)} \text{ below} \}
\end{equation}
\begin{enumerate}[label=(\roman*)]
	\item $u(s,j) \in \widehat{L}_{j}$ for all $s\in \mathbb{R}$ and for $j=0,1$.
	\item $\lim_{s\to -\infty} u(s,t) = x_0(t)$.
	\item $\lim_{s\to \infty} u(s,t) = x_1(t)$.
	\item $\partial_s u + J^{P_w}_{(s,t)} (\partial_t u - X_{H^{P_w}_{(s,t)}} (u)) =0$.
\end{enumerate}
This space is used to define continuation maps $c_{w,w+1}$ as in \eqref{eq:continuation}.

For more general cases, note that, for each $P=(S,\phi) \in \mathcal{R}^{k+1, F,{\bf w}}$, the boundary $\partial S$ has $(k+1)$ components. We will denote by $\partial_m S$ the component between $m$-th and $(m+1)$-th boundary punctures of $\partial S$ for $m=0,\dots,k$. For an integer $k \in \mathbb{Z}_{\geq 1}$, a tuple $(L_0,\dots, L_k)$ of admissible Lagrangians and for some flavor $F \subseteq \{1,\dots, k\}$ and a weight ${\bf w}=(w_0,w_1,\dots, w_k)$ satisfying $\eqref{eq:weight}$, let 
$$x_0 \in \mathcal{I}(L_{0},L_k; H_{\tau_{w_0}}), x_1 \in \mathcal{I}(L_{0}, L_{1};H_{\tau_{w_1}}), \dots, \text{ and }x_k \in \mathcal{I} (L_{k-1},L_{k}; H_{\tau_{w_k}})$$
be Hamiltonian chords. Then we consider the space of stable popsicle maps 
\begin{equation*}
	\mathcal{R}^{k+1,p, {\bf w}} (x_0,\dots,x_k) =\{(P, u)\,|\, P= (S,\phi) \in \mathcal{R}^{k+1,p,{\bf w}}, u : S \to \widehat{M} \text{ satisfies } \mathrm{(i)}, \mathrm{(ii)}, \mathrm{(iii)} \text{ and } \mathrm{(iv)} \text{ below} \}
\end{equation*}
\begin{enumerate}[label=(\roman*)]
	\item $u (\partial_m S) \subset \widehat{L}_{m}$ for $m=0,1,\dots,k$.
	\item $\lim_{s\to -\infty} u(\epsilon_0(s,t)) = x_0(t)$.
	\item $\lim_{s\to \infty} u(\epsilon_m (s,t)) =x_m(t)$ for $m =1,\dots,k$.
	\item $(du - X_{H^P_{z}}(u) \otimes \beta) \circ j+ J^{P}_{z} ( du - X_{H^P_{z}}(u) \otimes \beta) = 0$, where $j$ is the complex structure on $S$.
\end{enumerate}

\begin{rmk}\label{rmk:floerstrips}
Following the manner we defined the space of popsicle maps above, the space $\mathcal{M}(x_0,x_1;H_{\tau_w},J_w)$ of Floer strips defined in \eqref{eq:floerstrip} can be written as $\mathcal{R}^{2,\emptyset,(w,w)}(x_0,x_1)$.	
\end{rmk}

The condition \eqref{eq:formaximumprinciple} guarantees that popsicle maps with fixed inputs and outputs do not escape to infinity. Indeed, we have a variant of \cite[Lemma 7.2]{abo-sei10}, \cite[Lemma 2.2]{cie-oan18} as follows.
	\begin{lem}\label{lem:noescape}
		Let $(P,u) \in \mathcal{R}^{k+1,p,\mathbf{w}}(x_0,\dots,x_k)$ for some Hamiltonian chords $x_1,\dots, x_k$. Then the image of $u$ does not intersect $(2,\infty) \times \partial M$. 
	\end{lem}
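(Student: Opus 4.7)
This is a ``no escape'' maximum principle tailored to popsicle maps, paralleling \cite[Lemma 7.2]{abo-sei10} and \cite[Lemma 2.2]{cie-oan18}. The key input is the monotonicity condition \eqref{eq:formaximumprinciple}, which was built into Definition \ref{dfn:floerdata} precisely to absorb the domain-dependence of the Hamiltonian on popsicle domains.

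I first reduce the problem to a compact subregion of $S$. On each strip-like end $\epsilon_j$, the Hamiltonian equals $H_{\tau_{w_j}^{L_{j-1},L_j}}$, which on $[R,\infty)\times\partial M$ (with $R\leq 2$) is linear of slope $\tau_{w_j}\notin \mathrm{Spec}(\partial L_{j-1},\partial L_j)$. Since $X_{H_{\tau_{w_j}}}$ equals $\tau_{w_j}$ times the Reeb vector field on that region, the absence of Reeb chords of period $\tau_{w_j}$ forces every asymptotic chord $x_j$ to lie in $\{r<R\}\subseteq\{r<2\}$. Hence on each strip-like end the map $u$ eventually enters $\{r<2\}$, so $V \coloneqq u^{-1}([2,\infty)\times\partial M)$ is a compact subset of $S$ whose closure is disjoint from small neighborhoods of all punctures.

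The heart of the argument is to show that $\rho \coloneqq r\circ u$ is subharmonic on $V$. On the cylindrical end one has $\lambda = r\alpha$, and for a contact-type almost complex structure the function $r$ is plurisubharmonic. A standard manipulation of the perturbed Cauchy--Riemann equation $(du - X_{H^P_z}\otimes \beta^P)^{0,1}=0$, carried out as in \cite[Lemma 7.2]{abo-sei10}, produces the pointwise inequality
\begin{equation*}
-dd^c\rho \,\geq\, -u^*\bigl(d_S(H^P\beta^P)\bigr) \quad \text{on } \mathrm{int}(V),
\end{equation*}
where $d^c = d\circ J^P_z$. The right-hand side is nonnegative by exactly \eqref{eq:formaximumprinciple}, so $\rho$ is weakly $J^P$-subharmonic on the interior of $V$.

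Finally I handle the Lagrangian boundary and conclude. Admissibility of each $L_m$ means $\widehat{L}_m \cap ([1,\infty)\times\partial M) = [1,\infty)\times \partial L_m$ is cylindrical, so the radial direction is tangent to $\widehat{L}_m$ along $\partial S \cap V$; equivalently, the outward conormal derivative of $\rho$ vanishes there. The strong maximum principle together with Hopf's boundary lemma then forces any maximum of $\rho$ on the compact set $V$ to be attained on $\partial V \cap \{\rho = 2\}$, and openness of $\{\rho > 2\}$ rules out a nontrivial component on which $\rho\equiv 2$. Therefore $V=\emptyset$, as claimed. The main technical point is the pointwise inequality above: the domain-dependence of $H^P$ and the popsicle 1-form $\beta^P$ produce an extra term, which is exactly the one controlled by \eqref{eq:formaximumprinciple}.
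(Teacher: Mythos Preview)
Your argument is essentially correct but follows a different route from the paper. You run a pointwise maximum principle: show that $\rho=r\circ u$ is (weakly) subharmonic on $V=u^{-1}([2,\infty)\times\partial M)$, verify a Neumann condition along the Lagrangian boundary, and invoke Hopf's lemma. The paper instead argues by an integrated energy estimate in the style of \cite[Lemma~2.2]{cie-oan18}: it restricts $u$ to $S'=u^{-1}([2,\infty)\times\partial M)$, applies Stokes to the topological energy $E^{\mathrm{top}}(u')=\int_{S'} u'^*d\lambda - d(H^P\beta^P)$, uses \eqref{eq:formaximumprinciple} to drop the Hamiltonian term, kills the Lagrangian boundary contribution via $\lambda|_{[2,\infty)\times\partial L_m}=0$ and $\beta^P|_{\partial S}=0$, and shows the remaining integral over $\partial_n S'=u^{-1}(\{2\}\times\partial M)$ is strictly negative, contradicting $E^{\mathrm{top}}\geq 0$. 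Both proofs consume exactly the same structural inputs (contact-type $J^P$, linearity of $H^P$ on $[2,\infty)\times\partial M$, cylindrical Lagrangians, and \eqref{eq:formaximumprinciple}); your approach is more geometric, while the energy argument is slightly more robust to variations in the analytic setup.

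Two small points to tighten. First, your ``equivalently'' in the Neumann step is loose: the radial direction being tangent to $\widehat L_m$ does not by itself force $\partial_\nu\rho=0$. The actual mechanism is that for $v$ tangent to $\partial S$ one has $\beta^P(v)=0$, and the Cauchy--Riemann relation together with $dr\circ J^P=-\lambda$ gives $d\rho(jv)=\lambda(du(v))$, which vanishes because $du(v)\in T([2,\infty)\times\partial L_m)$ and $\lambda$ restricts to zero there. Second, the conclusion you actually obtain is $\rho\leq 2$ on $S$, hence the image misses $(2,\infty)\times\partial M$; asserting $V=\emptyset$ is slightly stronger than what the argument yields (and than what the lemma claims).
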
 
	\begin{proof}
		Suppose the contrary. Namely, suppose that there is a popsicle map $(P=(S,\phi),u) \in \mathcal{R}^{k+1,p,\mathbf{w}}(x_0,\dots,x_k)$ such that the image $u(S)$ intersects $(2,\infty) \times \partial M$. Let $S' = u^{-1}([2,\infty) \times \partial M) \subset S$ and let $u' = u|_{S'}$. Considering that all the Hamiltonian chords $x_i$ are contained in $M \cup [1,2) \times \partial M$, the boundary $\partial S'$ possibly can be decomposed into $\partial_l S'$ and $\partial_n S'$ characterized by
		$$ u'(\partial_l S') \subset \cup_{i=0}^k [2,\infty) \times \partial L_i \text{ and } u'(\partial_n S') \subset \{2\} \times \partial M.$$
		
		Then we will show that the topological energy of $u'$
		\begin{align*}
		E^{top}(u') &= \int_{S'} u'^* d\lambda - d (H^P(\cdot, u'(\cdot )) \beta)
		\end{align*}
		is less than $0$, which is weird since it is supposed to be greater than or equal to 0.
		
		If we denote the slope of the admissible function $H^P(z, \cdot)$ at infinity by $a_z \in \R$ for $z \in S$, then we have
		$$\lambda(X_{H^P}(u'(z))) - H^P(z, u'(z))= a_z.$$
		
		Using this we have
		\begin{align*} 
		E^{top}(u')	&= \int_{S'} d(u'^*\lambda - \lambda(X_{H^p}(u')) \beta^P) + \int_{S'} d(\lambda(X_{H^P}(u')) - H^P(\cdot, u'(\cdot))  \beta^P)\\
		&= \int_{S'} d(u'^*\lambda - \lambda(X_{H^p}(u')) \beta^P) + \int_{S'} d( a_{\cdot}\beta^P).
	\end{align*}
	
		Here \eqref{eq:formaximumprinciple} says that, for $x= (2,y) \in [2,\infty] \times \partial M$,
		$$ d_S (H^P(\cdot,(2,y)) \beta^P) = d (a_{\cdot} \beta^P) \leq 0.$$
		Consequently, we have
	\begin{align*}
		E^{top}(u') & \leq \int_{S'} d(u'^*\lambda - \lambda(X_{H^p}(u')) \beta^P)\\
		&= \int_{\partial_l S'}  u'^*\lambda - \lambda(X_{H^p}(u')) \beta^P  + \int_{\partial_n S'} u'^*\lambda - \lambda(X_{H^p}(u')) \beta^P \\
		&=\int_{\partial_n S'} u'^*\lambda -\lambda(X_{H^p}(u')) \beta^P\\
		&= \int_{\partial_n S'} \lambda (du' - X_{H^p}(u')  \otimes \beta^P)\\
		&= \int_{\partial_n S'} - \lambda \circ J^P \circ (du' - X_{H^p}(u') \otimes \beta^P) \circ j\\
		&= \int_{\partial_n S'} - dr \circ (du' - X_{H^p}(u') \otimes \beta^P) \circ j\\
		&= \int_{\partial_n S'} - dr \circ du' \circ j.
		\end{align*}
	Here, the third equality follows from the fact that the restriction of $\lambda$ to $[2,\infty) \times \partial L_i$ vanishes and $\beta^P$ is assumed to vanish on $\partial S$. The last equality follows since $X^{H^P}(u')$ is a multiple of Reeb vector field over $\partial_n S'$. 
	
	However, the term at the bottom line is less than 0. Indeed, $-dr \circ du' \circ j \leq 0$ with respect to the orientation form on $\partial_n S'$ since $jv$ points inward for a positively oriented tangent vector $v$ along $\partial_n S'$ and $(r \circ u')(z)$ does not decrease as a point $z$ in $S'$ moves from $\partial_n S'$ inward. Moreover, there is at least one point $z\in \partial_n S'$ where the inequality is strict since $u$ is supposed to escape to $(2,\infty) \times \partial M$. This shows that there is no such a popsicle map $(P,u)$ as desired.
	\end{proof}

The smoothness of moduli spaces of popsicle maps are guaranteed for a generic choice of infinitesimal deformations of an almost complex structure given as a part of our universal and consistent Floer data with Lagrangian labels on $\mathcal{R}^{k+1,p,\mathbf{w}}$ as argued in  \cite[Section 3.4]{abo-sei10}. Furthermore, Lemma \ref{lem:noescape} implies that the standard Gromov compactness argument can be applied to the moduli spaces of popsicle maps. As a result, we have the following theorem.
\begin{thm}\label{thm:moduli}
	Let $(k+1,p,{\bf w})$ be a tuple consisting of an integer $k \in \mathbb{Z}_{\geq 1}$, a flavor $p :F \to \{1,\dots, k\}$ and a weight ${\bf w}$ satisfying $k+|F| \geq 2$ and \eqref{eq:weight}. For any tuple $(L_0,\dots, L_k)$ of admissible Lagrangians chosen from the collection $\{L_i\}_{i \in I}$ and Hamiltonian chords $x_0, \dots, x_k$ given as above, we have
	\begin{enumerate}
		\item The space $\mathcal{R}^{k+1,p,{\bf w}}(x_0,x_1,\dots,x_k)$ is a smooth manifold of dimension
		\begin{equation}\label{eq:dimension}
			k-2 + |F| +\deg x_0 -\sum_{i=1}^k \deg x_i,
		\end{equation}
		for a generic choice of infinitesimal deformations $\{K^P\}_{P\in \mathcal{R}^{k+1,p,{\bf w}}}$. 
		\item The space $\mathcal{R}^{k+1,p,{\bf w}}(x_0,x_1,\dots,x_k)$ admits the Gromov compactification obtained by adding broken stable popsicle maps and semi-stable popsicle maps.
	\end{enumerate}
\end{thm}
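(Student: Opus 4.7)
The plan is to prove both parts via the standard Fredholm and Gromov-compactness framework for Lagrangian Floer theory, adapted to moduli of popsicle maps along the lines of \cite{abo-sei10}, with the modification that our Floer data use a family of Hamiltonians rather than sub-closed one-forms.

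For part (1), I would first realize $\mathcal{R}^{k+1,p,\mathbf{w}}(x_0,\dots,x_k)$ as the zero set of a Fredholm section. Build a Banach manifold $\mathcal{B}^{k+1,p,\mathbf{w}}$ of pairs $(P,u)$ where $P=(S,\phi) \in \mathcal{R}^{k+1,p,\mathbf{w}}$ and $u: S \to \widehat{M}$ is of Sobolev class $W^{1,q}_{\mathrm{loc}}$ (some $q>2$), satisfies the Lagrangian boundary conditions of (i), and has exponentially decaying convergence to $x_0,\dots,x_k$ at the strip-like ends. Over it, consider the Banach bundle $\mathcal{E}$ with fibre $L^{q}(\Omega^{0,1}(S,u^*T\widehat{M}))$ and the section defined by the inhomogeneous Cauchy--Riemann equation $(du - X_{H^{P}}(u)\otimes\beta^{P})^{0,1}_{J^{P}}$. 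The moduli space is then exactly the zero set of this section. The linearization splits as a direct sum of a variation along $\mathcal{R}^{k+1,p}$ (of dimension $k-2+|F|$, as computed in Subsection \ref{subsection:popsicle}) and a standard Cauchy--Riemann operator on $u^*T\widehat{M}$ with Lagrangian boundary conditions and non-degenerate asymptotes; the Fredholm index of the latter is $\deg x_0 - \sum_{i=1}^{k}\deg x_i$ by the Maslov--Conley--Zehnder index formula, using the grading of \cite{sei08}. Summing yields the dimension formula \eqref{eq:dimension}.

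Smoothness is then a transversality argument via Sard--Smale. Expand the parameter space so the infinitesimal deformation $K^{P}$ varies in a Banach space of admissible, $\mathbf{Sym}^{p}$-invariant deformations vanishing on strip-like ends. Show that the universal zero set is cut out transversally: at any $(P,u, K^P)$ the linearization in the $K^{P}$ direction is surjective, since any non-constant popsicle map has an injective interior point away from the strip-like ends where an $L^2$-dual element must vanish if orthogonal to all admissible perturbations. Averaging perturbations over the finite group $\mathbf{Sym}^{p}$ preserves invariance and does not destroy this surjectivity away from a positive codimension locus, so a generic choice of $\{K^{P}\}$ gives a smooth moduli space of the claimed dimension.

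For part (2), I would invoke Gromov compactness. By Lemma \ref{lem:noescape}, images of popsicle maps lie in the compact region $M \cup [1,2]\times \partial M$, and the standard topological vs geometric energy identity, combined with \eqref{eq:formaximumprinciple}, bounds the $L^2$-energy uniformly in terms of the fixed action differences $\mathcal{A}_{H_{\tau_{w_0}}}(x_0) - \sum_{i}\mathcal{A}_{H_{\tau_{w_i}}}(x_i)$. Sphere bubbles are excluded by exactness $\omega = d\lambda$ on $\widehat{M}$, and disk bubbles are excluded by exactness of the admissible Lagrangians. The remaining degenerations are (a) breaking of Floer strips at strip-like ends, producing broken stable popsicle maps indexed by pairs $(T,\mathbf{p})$ as in Subsection \ref{subsection:popsicle}, and (b) degenerations of the underlying popsicle, which occur precisely at the boundary strata of $\overline{\mathcal{R}}^{k+1,p,\mathbf{w}}$. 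Strip components with $k=1$ and $|F|=0$ which bubble off are recorded as semi-stable popsicle maps, completing the description of the Gromov compactification.

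The main obstacle I expect is achieving $\mathbf{Sym}^{p}$-equivariant transversality when the flavor $p$ is not injective; the action permutes sprinkles lying on the same popsicle stick and can fix popsicle maps with additional symmetry. This is handled by restricting $K^{P}$ to the invariant subspace and exploiting that the free locus of the $\mathbf{Sym}^{p}$-action is open and dense, which suffices to obtain generic smoothness at interior points. A secondary, but routine, obstacle is the consistency of the $C^{0}$-bound under gluing at each boundary stratum, which is preserved by the universal choice of Floer data from Theorem \ref{thm:floerdata}. Both obstacles follow the template of \cite{abo-sei10,ggv22}.
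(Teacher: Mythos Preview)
Your proposal is correct and follows the same approach the paper takes. In fact, the paper does not give a detailed proof of this theorem: it simply states, immediately before Theorem~\ref{thm:moduli}, that smoothness for generic infinitesimal deformations follows from \cite[Section~3.4]{abo-sei10}, and that Lemma~\ref{lem:noescape} supplies the $C^0$-bound needed to invoke the standard Gromov compactness argument. Your sketch is a faithful expansion of precisely that template, including the Fredholm setup, the index computation, the Sard--Smale transversality via perturbations of $J$, and the use of exactness to exclude sphere and disk bubbles.

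One small remark on your discussion of the $\mathbf{Sym}^p$-equivariant transversality obstacle: the paper (following \cite{abo-sei10}) observes that the $\mathbf{Sym}^p$-action on the moduli space of popsicle \emph{maps} is in fact free once the Floer data are chosen invariantly, so the issue is not fixed points of the action on maps but rather whether the restricted space of invariant perturbations is still large enough to achieve surjectivity of the universal linearization. Your proposal alludes to this correctly, but the phrasing about the ``free locus being open and dense'' is slightly off; the relevant freeness is already built in.
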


Let us consider a triple $(k+1,p,\mathbf{w})$ and a tuple $(x_0,x_1,\dots,x_k)$ of Hamiltonian chords given as in Theorem \ref{thm:moduli} such that the expected dimension \eqref{eq:dimension} of the moduli space $\mathcal{R}^{k+1,p,\mathbf{w}} (x_0,x_1,\dots, x_k)$ is zero. Theorem \ref{thm:moduli} implies that $\mathcal{R}^{k+1,p,\mathbf{w}} (x_0,x_1,\dots, x_k)$ is a zero-dimensional manifold with only finitely many points if an infinitesimal deformation is chosen to be generic enough. However, if $\mathbf{Sym}^p$ is not trivial or equivalently if $p$ is not injective, then elements of $\mathcal{R}^{k+1,p,\mathbf{w}} (x_0,x_1,\dots, x_k)$ sum up to zero considering the orientation.
\begin{lem}\label{lem:trivial}\cite[Lemma 3.7]{abo-sei10}
	If $\mathbf{Sym}^p$ is not trivial, then the signed sum of elements of $\mathcal{R}^{k+1,p,\mathbf{w}} (x_0,x_1,\dots, x_k)$ is zero.
\end{lem}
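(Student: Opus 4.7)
The plan is to construct a free, orientation-reversing involution on the moduli space $\mathcal{R}^{k+1,p,\mathbf{w}}(x_0,\dots,x_k)$ using the non-triviality of $\mathbf{Sym}^p$, which will force the signed count to vanish by pairing each popsicle map with its image under the involution.

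First, I would observe that the group $\mathbf{Sym}^p$ decomposes as the product $\prod_{i=1}^k \mathrm{Sym}(p^{-1}(i))$ of symmetric groups on the fibers, so it is non-trivial precisely when some popsicle stick carries at least two sprinkles. Pick any such pair $\{f,f'\} \subset F$ with $p(f)=p(f')$ and let $\sigma \in \mathbf{Sym}^p$ be the corresponding transposition. The requirement (f) in Definition \ref{dfn:floerdata} ensures that $H^P=H^{\sigma\cdot P}$, $I^P=I^{\sigma\cdot P}$ and $\beta^P=\beta^{\sigma\cdot P}$, so the $\mathbf{Sym}^p$-action on $\mathcal{R}^{k+1,p,\mathbf{w}}$ given by \eqref{eq:action} lifts tautologically to an action on the space of popsicle maps by $(P,u)\mapsto(\sigma\cdot P, u)$, sending the stratum $\mathcal{R}^{k+1,p,\mathbf{w}}(x_0,\dots,x_k)$ to itself. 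The action is \emph{free} because the two sprinkles $\phi_f^{-1}(0,\tfrac12)$ and $\phi_{f'}^{-1}(0,\tfrac12)$ associated with $f$ and $f'$ are distinct points of the domain $S$, so the data $(S,\{\phi_f\}_f)$ and $(S,\{\phi_{\sigma(f)}\}_f)$ are genuinely different popsicles.

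Next, I would show that $\sigma$ acts on the zero-dimensional manifold $\mathcal{R}^{k+1,p,\mathbf{w}}(x_0,\dots,x_k)$ by an orientation-reversing diffeomorphism. The orientation on this moduli space is built from the orientation on the underlying moduli of popsicles $\mathcal{R}^{k+1,p,\mathbf{w}}$ together with the determinant lines of the linearized Cauchy--Riemann operator at each map, exactly as in \cite[Section 3.6]{abo-sei10}. The orientation of $\mathcal{R}^{k+1,p,\mathbf{w}}$ in turn is determined by the product ordering of the sprinkle positions along their respective popsicle sticks; permuting two sprinkles on the same stick simply swaps two coordinates in this product and therefore changes the orientation by the sign of the transposition, namely $-1$. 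Since the Cauchy--Riemann data, and hence the determinant line contributions, are unaffected by $\sigma$ (by the invariance stated in (f)), the induced map on $\mathcal{R}^{k+1,p,\mathbf{w}}(x_0,\dots,x_k)$ is orientation-reversing.

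Given these two facts, the $\sigma$-orbits partition $\mathcal{R}^{k+1,p,\mathbf{w}}(x_0,\dots,x_k)$ into pairs whose contributions to the signed count cancel, so the total signed sum vanishes, which is exactly the assertion of the lemma. The main technical point, and the only step that is not purely formal, is the verification that swapping two sprinkles reverses orientation; this is the same orientation bookkeeping carried out in \cite[Section 3.6]{abo-sei10} for the popsicle moduli, and the only thing to check in our setting is that nothing in the choice of Floer data obstructs this sign analysis, which follows from condition (f) of Definition \ref{dfn:floerdata}.
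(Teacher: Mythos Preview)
Your proposal is correct and follows essentially the same approach as the paper: the paper's own justification (immediately following the lemma) is precisely that $\mathbf{Sym}^p$ acts freely on $\mathcal{R}^{k+1,p,\mathbf{w}}(x_0,\dots,x_k)$ by the invariance condition (f), and that elements related by a transposition contribute with opposite signs. Your write-up simply spells out these two points in more detail, with the orientation analysis of the transposition on sprinkle coordinates being exactly the content of \cite[Section 3.6]{abo-sei10}.

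One small remark on your freeness argument: you assert that the two sprinkles $s_f$ and $s_{f'}$ are distinct points of $S$, but nothing in the definition of a popsicle forbids them from coinciding. What actually guarantees distinctness is a transversality argument---the coincidence locus is positive-codimension in $\mathcal{R}^{k+1,p}$, and for generic (invariant) Floer data the zero-dimensional moduli of popsicle maps avoids it (equivalently, a popsicle map with $s_f=s_{f'}$ would yield a solution in a moduli space of strictly negative expected dimension after forgetting a sprinkle). The paper's sketch is equally brief on this point and defers to \cite{abo-sei10} for details, so this is not a discrepancy with the paper, just a place where both treatments lean on the cited reference.
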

To see why this is the case, observe that $\mathbf{Sym}^p$ acts freely on $\mathcal{R}^{k+1,p,\mathbf{w}} (x_0,x_1,\dots, x_k)$ since the Floer data with Lagrangian labels are assumed to be invariant under the action of $\mathbf{Sym}^p$ in the sense of Definition \ref{dfn:floerdata}. But, any two elements of $\mathcal{R}^{k+1,p,\mathbf{w}} (x_0,x_1,\dots, x_k)$ related by a transposition in $\mathbf{Sym}^p$ contribute with different signs.

This justifies considering the case when $p$ is injective only. In such a case, we denote the corresponding moduli space $\mathcal{R}^{k+1,p,\mathbf{w}}(x_0,x_1,\dots,x_k)$ by $\mathcal{R}^{k+1,F,\mathbf{w}}(x_0,x_1,\dots,x_k)$. Let us denote the Gromov compactification of $\mathcal{R}^{k+1,F,{\bf w}}(x_0,x_1,\dots,x_k)$ by $\overline{\mathcal{R}}^{k+1,F,{\bf w}}(x_0,x_1,\dots,x_k)$ described in Theorem \ref{thm:moduli}.

\bigskip

For later use in the proof of Theorem \ref{thm:ainfinity}, we describe the codimension-one strata of the space $\overline{\mathcal{R}}^{k+1,F,\mathbf{w}}(x_0,x_1,\dots,x_k)$ for a tuple $(x_0,\dots,x_k)$ of Hamiltonian chords such that the expected dimension \eqref{eq:dimension} of the moduli space is one.

First, the codimension-one strata of $\overline{\mathcal{R}}^{k+1,F,\mathbf{w}}(x_0,x_1,
\dots,x_k)$ consisting of stable broken popsicle maps corresponding to \eqref{eq:codimension1strata1} and \eqref{eq:codimension1strata2} are given by 
\begin{equation}\label{eq:codimension1strataofmaps1}
	\bigcup_{1\leq i < j \leq k} \bigcup_{|\{i+1,\dots,j\} \cap F \setminus F_{1}|\leq 1} \bigcup_{y} \mathcal{R}^{k+2-(j-i),F \setminus F_{1},\mathbf{w}_0}(x_0, x_1,\dots,x_{i} ,y, x_{j+1}, \dots, x_k) \times \mathcal{R}^{j-i+1,F_{1},\mathbf{w}_1}(y,x_{i+1}, \dots, x_j),
\end{equation}
and
\begin{equation}\label{eq:codimension1strataofmaps2}
\bigcup_{1\leq i <j \leq k} \bigcup_{|\{i+1,\dots,j\} \cap F \setminus F_{1}| \geq 2} \bigcup_{y} \mathcal{R}^{k+2-(j-i),p_{v_0},\mathbf{w}_0} (x_0,x_1,\dots,x_{i},y,x_{j+1}, \dots,x_k) \times \mathcal{R}^{j-i+1,F_{1},\mathbf{w}_1}(y,x_{i+1},\dots, x_j),
\end{equation}
where 
\begin{itemize}
	\item $\mathbf{w}_0 = (w_0, w_1,\dots, w_i, \sum_{l=i+1}^{j} w_l + |F_{1}|,w_{j+2},\dots,w_k)$, \item $\mathbf{w}_1=(\sum_{l=i+1}^j w_l + |F_{1}|,w_{i+1},\dots,w_j)$.
	\item $y$ runs over all elements of $\mathcal{I}(L_{i}, L_{j}; H_{\tau_{w}})$ of degree $\deg y = \sum_{l=i+1}^j \deg x_l + 2 - (j-i)$ for $w= \sum_{l=i+1}^j w_l + |F_{1}|$ and
	\item $F_1$ runs over all subsets of $\{i+1,\dots, j\} \cap F$ satisfying the prescribed condition
	on the number $|\{i+1,\dots, j\} \cap F \setminus F_1|$ and the stability condition \eqref{eq:stability2}.
\end{itemize}

Second, the codimension-one strata of $\overline{\mathcal{R}}^{k+1,F,\mathbf{w}}(x_0,x_1,\dots,x_k)$ consisting of broken popsicle maps obtained as a semi-stable popsicle map (or equivalently a Floer strip) bubbles off are given by
\begin{equation}\label{eq:codimension1strataofmaps3}
	\bigcup_{y_0} \mathcal{R}^{2, \emptyset, (w_0,w_0)} (x_0, y_0) \times \mathcal{R}^{k+1, F, \mathbf{w}} (y_0, x_1,\dots,x_k),
\end{equation}
where $y_0$ runs over all $y_0 \in \mathcal{I}(L_0,L_k;H_{\tau_{w_0}} )$ of degree $\deg y_0 = \deg x_0 -1$,
and
\begin{equation}\label{eq:codimension1strataofmaps4}
	\bigcup_{y_j} \mathcal{R}^{k+1, F, \mathbf{w}} (y, x_1,\dots, x_{j-1}, y_j, x_j, \dots ,x_k) \times \mathcal{R}^{2, \emptyset, (w_j,w_j)} (y_j, x_j) 
\end{equation}
for $1\leq j \leq k$, where $y_j$ runs over all $y_j \in \mathcal{I}(L_{j-1},L_j;H_{\tau_{w_0}} )$ of degree $\deg y_j = \deg x_j +1$.

We point out that each of the strata \eqref{eq:codimension1strataofmaps1}, \eqref{eq:codimension1strataofmaps2}, \eqref{eq:codimension1strataofmaps3} and \eqref{eq:codimension1strataofmaps4} is a compact zero-dimensional manifold. However, the contribution from \eqref{eq:codimension1strataofmaps2} is zero due to Lemma \ref{lem:trivial}. Therefore, it is enough to consider the strata \eqref{eq:codimension1strataofmaps1}, \eqref{eq:codimension1strataofmaps3} and \eqref{eq:codimension1strataofmaps4} when counting  elements of the codimension-one strata of the moduli space $\mathcal{R}^{k+1,F,\mathbf{w}}(x_0,x_1,\dots,x_k)$. We may combine those three strata \eqref{eq:codimension1strataofmaps1}, \eqref{eq:codimension1strataofmaps3} and \eqref{eq:codimension1strataofmaps4} into
\begin{equation}\label{eq:codimension1strataofmaps}
	\bigcup_{1\leq i < j \leq k} \bigcup_{|\{i+1,\dots,j\} \cap F \setminus F_{1}| \leq 1} \bigcup_{y} \mathcal{R}^{k+2-(j-i),F \setminus F_{1},\mathbf{w}_0}(x_0, x_1,\dots,x_{i} ,y, x_{j+1}, \dots, x_k) \times \mathcal{R}^{j-i+1,F_{1},\mathbf{w}_1}(y,x_{i+1}, \dots, x_j),
\end{equation}
where $\mathbf{w}_0$, $\mathbf{w}_1$ and $y$ are given as right below \eqref{eq:codimension1strataofmaps2} and only the last bullet for the description of $F_1$ changes into
\begin{itemize}
	\item $F_1$ runs over all subsets of $\{i+1,\dots, j\} \cap F$ satisfying $|\{i+1,\dots, j\} \cap F \setminus F_1| \leq 1$.
\end{itemize}
Note that the stability condition \eqref{eq:stability2} is not required here anymore. The above argument says that it is sufficient to count elements of \eqref{eq:codimension1strataofmaps} only among those in the codimension-one strata of  $\mathcal{R}^{k+1,F,\mathbf{w}}(x_0,x_1,\dots,x_k)$. We will make use of this fact in the proof of Theorem \ref{thm:ainfinity} showing the $A_{\infty}$-equations of our higher products on the Rabinowitz Fukaya category.

After choosing an infinitesimal deformation generic enough in the sense of Theorem \ref{thm:moduli}, for any tuple $(x_0,x_1,\dots,x_k)$ of Hamiltonian chords such that the dimension \eqref{eq:dimension} of $\mathcal{R}^{k+1,F,{\bf w}}(x_0,\dots,x_k)$ is zero, one gets a preferred isomorphism
$$ |o_{(P,u)}| :|o_{x_k}|_{\mathbb{K}}\otimes \dots \otimes |o_{x_1}|_{\mathbb{K}} \to |o_{x_0}|_{\mathbb{K}}$$
for every element $(P,u) \in \mathcal{R}^{k+1,F,{\bf w}}(x_0,x_1,\dots,x_k)$. We define the linear map
\begin{equation*}
	m^{k,F,{\bf w}}(x_0,x_1,\dots,x_k)  = \sum_{(P,u) \in \mathcal{R}^{k+1,F,{\bf w}}(x_0,x_1,\dots,x_k)} |o_{(P,u)}|: |o_{x_k}|_{\mathbb{K}}\otimes \dots \otimes |o_{x_1}|_{\mathbb{K}} \to |o_{x_0}|_{\mathbb{K}}
\end{equation*}
as in \cite[Section 3.6]{abo-sei10}.

In particular, the continuation map $c_{\tau_w,\tau_{w+1}} : CF^*(L_0,L_1;H_{\tau_w}) \to CF^*(L_0,L_1;H_{\tau_{w+1}})$ is defined by
\begin{equation}\label{eq:continuation}
 c_{\tau_w,\tau_{w+1}} = \sum_{x_0 \in \mathcal{I}(L_0,L_1;H_{\tau_w}), x_1\in \mathcal{I}(L_0,L_1;H_{\tau_{w+1}}) } m^{1, \{1\}, (w+1,w)}(x_0,x_1).
\end{equation}

Now we are ready to construct higher products
\begin{equation}\label{eq:product1}
	\widecheck{\mu}^k :RFC^*(L_{k-1},L_{k}) \otimes \dots \otimes RFC^*(L_{0},L_{1}) \to RFC(L_{0},L_{k})^*[2-k]
\end{equation}
for $k\in \Z_{\geq 2}$ and tuples $(L_0,\dots,L_k)$ of admissible Lagrangians of $M$.

For that purpose, let us first consider the identifications 
$$ CW_{n-*}(L_0,L_1) = (A^*_{\leq 0}[-1])^* \oplus B_{\leq 0}^*$$
for 
$$A^*_{\leq 0} =\prod_{w=0}^{\infty} CF^* (L_0,L_1; H_{\tau_{-w}}) \text{ and }B^*_{\leq 0}= \prod_{w=0}^{\infty} CF^{*} (L_0,L_1; H_{\tau_{-w}})$$
and 
\begin{equation}\label{eq:wrappedfloercomplexAB}
	CW^*(L_0,L_1) = A_{>0}^* \oplus (B_{>0}[1])^*
\end{equation}
for 
$$A^*_{>0} = \bigoplus_{w=1}^{\infty} CF^*(L_0,L_1;H_{\tau_w}) \text{ and } B^*_{>0}= \bigoplus_{w=1}^{\infty} CF^{*}(L_0,L_1;H_{\tau_w})$$
as graded vector spaces.

Consequently, as a graded vector space, we have the following identifications:
\begin{align*}
	RFC^*(L_0,L_1) &= CW_{n-*}(L_0,L_1) [1] \oplus CW^*(L_0,L_1)\\
	&= A_{\leq 0}^* \oplus (B_{\leq 0}[1])^* \oplus A_{>0}^* \oplus (B_{>0}[1])^*\\
	&= A^* \oplus B[1]^*
\end{align*}
for 
$$A^* = A_{\leq 0}^*\oplus A_{>0}^* \text{ and } B^* = B_{\leq 0}^*\oplus B_{>0}^*.$$
Or equivalently, one may think of this as the formal variables $q$ and $q^{\vee}$ cancel each other so that
$$CW_{n-*}(L_0,L_1)[1] = CW_{n-1-*}(L_0,L_1)q =(A^{*}_{\leq 0}q^{\vee} \oplus B^{*+1}_{\leq 0})q =A^{*}_{\leq 0}  \oplus B^{*+1}_{\leq 0}q$$
and hence
\begin{equation*}
\begin{split}
RFC^*(L_0,L_1) &= CW_{n-1-*}(L_0,L_1)q \oplus CW^*(L_0,L_1)\\
&=(A^{*}_{\leq 0}  \oplus B^{*+1}_{\leq 0}q)\oplus (A^{*}_{> 0}  \oplus B^{*+1}_{> 0}q)\\
&= A^{*} \oplus B^{*+1}q.
\end{split}
\end{equation*}

Here $A^*$ and $B^*$ can be described as follows:
\begin{equation}\label{eq:AB}
	A^*= B^* = \left\{ (x_w)_w \in \prod_{w \in\mathbb{Z}} CF^*(L_0,L_1;H_{\tau_w}) \,\Big| \, \exists \text{ only finitely many } w>0  \text{ such that } x_w \neq 0 \right\}.
\end{equation}

 For later use in the proof of Theorem \ref{thm:ainfinity}, for each $w_0 \in \Z$, we define
 \begin{align*}
  \mathrm{pr}_{A,w_0}&: RFC^*(L_0,L_1)= A^{*} \oplus B^{*+1}q \to CF^{*}(L_0,L_1; H_{\tau_{w_0}})\\
   \mathrm{pr}_{B,w_0}&: RFC^*(L_0,L_1)= A^{*} \oplus B^{*+1}q \to CF^{*+1}(L_0,L_1; H_{\tau_{w_0}})
  \end{align*}
 by
 \begin{equation}\label{eq:prAB}
 	\begin{split}
  \mathrm{pr}_{A,w_0}( (a_w)_w  + (b_w)_w q) &= a_{w_0},\\
  \mathrm{pr}_{B,w_0}( (a_w)_w  + (b_w)_w q) &= b_{w_0}.
  	\end{split}
  \end{equation}

First consider a triple $(k+1=2,F=\emptyset,(w_0,w_0))$ for some weight $w_0\in \Z$, which is not covered in Theorem \ref{thm:moduli}. We define the corresponding operator $$\mu_{A}^{1,\emptyset,(w_0,w_0)} : CF^*(L_0,L_1;w_0) \to CF^{*+1}(L_0,L_1;w_0)$$ by
$$\mu_{A}^{1,\emptyset,(w_0,w_0)}(x_1) = (-1)^{\deg x_1}\delta_{\tau_{w_0}}(x_1), \forall x_1\in \mathcal{I}(L_0,L_1;H_{\tau_{w_0}}),$$
for the Floer differential $\delta_{\tau_{w_0}}$ defined in \eqref{eq:differential}.

Now let $(k+1,F,{\bf w})$ be a triple given as in Theorem \ref{thm:moduli}.
Then let $e_i \in \{0,1\}$ be an exponent given by
\begin{equation*}
	e_i =\begin{cases} 1& (i \in F), \\ 0 & (\text{otherwise})\end{cases}
\end{equation*}
for each $i \in \{ 1, \dots, k\}$. We define an operator
\begin{equation*}
	\mu^{k,F,{\bf w}}_A : CF^*(L_{k-1}, L_{k};H_{\tau_{w_k}})q^{e_k}\otimes \dots \otimes CF^*(L_{0},L_{1};H_{\tau_{w_1}})q^{e_1} \to CF^*(L_{0},L_{k};H_{\tau_{w_0}}),
\end{equation*}
of degree $2-k$ by mapping
$$ x_{k}q^{e_k} \otimes \dots \otimes x_1 q^{e_1} \mapsto \sum_{x_0 \in\mathcal{I}(L_{0},L_{k};H_{\tau_{w_0}})} (-1)^* m^{k,F, {\bf w}}(x_0,\dots,x_k) (x_{k}\otimes \dots \otimes x_1),$$
for $x_j \in \mathcal{I}(L_{j-1}, L_{i_j};H_{\tau_{w_j}})$ and $* = \sum_{j=1}^{j=k} j\deg x_j + \sum_{j \in F} \sum_{m >j} (\deg x_m -1)$, and extending this linearly.

Next, for each $f \in F$, consider
$$F^f = F \setminus \{ f\}$$
and
$${\bf w}^f = (w_0 -1, w_1,\dots,w_k).$$
Then, we define an operator 
\begin{equation*}
	\mu^{k,F,{\bf w}}_B : CF^*(L_{k-1}, L_{k};H_{\tau_{w_k}})q^{e_k}\otimes \dots \otimes CF^*(L_{0},L_{1};H_{\tau_{w_1}})q^{e_1} \to CF^*(L_{0},L_{k};H_{\tau_{w_0-1}})
\end{equation*}
of degree $3-k$ by mapping
$$ x_{k}q^{e_k} \otimes \dots \otimes x_f q^{e_f} \otimes \dots \otimes x_1 q^{e_1} \mapsto \sum_{f \in F} (-1)^{*_f} \mu^{k, F^f, {\bf w}^f}_{A} ( x_{k}q^{e_k} \otimes \dots \otimes x_f \otimes \dots x_1 q^{e_1}),$$
for $x_j \in \mathcal{I}(L_{j-1}, L_{j};H_{\tau_{w_j}})$ and 
$$*_f = \sum_{l=f+1}^k (\deg x_l -1),$$
and extending this linearly.

Let $(e_1,\dots, e_k) \in \prod_{i=1}^k \{0,1\}$ and let $F =\{i\in \{1,\dots,k\}| e_i =1\}$. 
For each $i =1,\dots, k$, let
$$(x_{w_i}^i)_{w_i} q^{e_i} \in \begin{cases} A^{*} &  (e_i=1), \\ B^{*+1}q & (e_i=0). \end{cases}$$
be given. For $k\geq 2$, we define the product $\widecheck{\mu}^k$ \eqref{eq:product1} by
\begin{equation}\label{eq:product2}
	\begin{split}
		\widecheck{\mu}^k\left( (x^k_{w_k})_{w_k} q^{e_k}\otimes \dots \otimes (x^1_{w_1})_{w_1} q^{e_1}\right)&= \widecheck{\mu}^k_A\left((x^k_{w_k})_{w_k} q^{e_k}\otimes \dots \otimes (x^1_{w_1})_{w_1} q^{e_1}\right)\\
		&+\widecheck{\mu}^k_B\left((x^k_{w_k})_{w_k} q^{e_k}\otimes \dots \otimes (x^1_{w_1})_{w_1} q^{e_1}\right) q, 
	\end{split}
\end{equation}
where $\widecheck{\mu}^k_A\left((x^k_{w_k})_{w_k} q^{e_k}\otimes \dots \otimes (x^1_{w_1})_{w_1} q^{e_1}\right)$ and $\widecheck{\mu}^k_A\left((x^k_{w_k})_{w_k} q^{e_k}\otimes \dots \otimes (x^1_{w_1})_{w_1} q^{e_1}\right)$ are defined by
\begin{equation*}\label{eq:productAB}
	\begin{split}
	\widecheck{\mu}^k_A\left((x^k_{w_k})_{w_k} q^{e_k}\otimes \dots \otimes (x^1_{w_1})_{w_1} q^{e_1}\right) & =	\left(\sum_{\sum_{i=1}^k w_i +|F|=w_0}\mu^{k,F, (\sum w_i+|F|,w_k,\dots, w_1)}_A (x^k_{w_k} q^{e_k} \otimes \dots \otimes x^1_{w_1} q^{e_1}) \right)_{w_0},\\
	 \widecheck{\mu}^k_B\left((x^k_{w_k})_{w_k} q^{e_k}\otimes \dots \otimes (x^1_{w_1})_{w_1} q^{e_1}\right)& = \left(\sum_{\sum_{i=1}^k w_i+|F|-1=w_0}\mu^{k,F,(\sum w_i+|F|,w_k,\dots, w_1)}_B (x^k_{w_k} q^{e_k}\otimes \dots \otimes x^1_{w_1} q^{e_1}) \right)_{w_0}.
	\end{split}
\end{equation*}

Recall that $\mu_{A}^{k,F,\mathbf{w}}$ is defined by counting rigid stable popsicle maps that are asymptotic to a Hamiltonian chord at the negative end and Hamiltonian chords at positive ends, and that have sprinkles $\{s_f\}_{f\in F}$. On the other hand, $\mu_{B}^{k,F,\mathbf{w}}$ is a signed sum of $\mu_{A}^{k,F^{f},\mathbf{w}^{f}}$ for all $f\in F$, which means that it is defined counting rigid stable popsicle maps with one sprinkle $s_f$ omitted from $\{s_f\}_{f\in F}$. We will see in the proof of Theorem \ref{thm:ainfinity} that we have to take all such stable popsicle maps into account in order to define higher products that satisfy the $A_{\infty}$-equation once we define the Rabinowitz Floer complex as in \eqref{eq:rabinowitzcomplex} and the differential $\widecheck{\mu}^1$ as in \eqref{eq:rabinowitzdifferential}.

Note further that the definition of $\widecheck{\mu}^k$ given in \eqref{eq:product2} does not extend to the case $k=1$.
Indeed, the expression \eqref{eq:product2} for the case $k=1$ is different from the differential $\widecheck{\mu}^1$ \eqref{eq:rabinowitzdifferential}. To be more precise, for any pair of admissible Lagrangians $(L_0,L_1)$, they differ by the mapping
\begin{equation*}
	\partial_q : RFC^*(L_0,L_1) \to RFC^{*+1}(L_0,L_1), (a_w)_w+(b_w)_w q \mapsto (b_w)_w
\end{equation*}
up to sign, since $\widecheck{\mu}^1$ \eqref{eq:rabinowitzdifferential} involves a map sending $(b_w)q \in B^{*+1}q$ to $(b_w) \in A^{*+1}$.

 We extend the operation $\partial_q$ to tensor products of Rabinowitz Floer complexes by defining $$\partial_q: RFC^*(L_{k-1},L_k) \otimes \dots \otimes RFC^*(L_0,L_1) \to (RFC^*(L_{k-1},L_k) \otimes \dots \otimes RFC^*(L_0,L_1))[1]$$ by
$$\partial_q (c^k\otimes \dots \otimes c^1 ) = \sum_{i=1}^{k} (-1)^{\sum_{j=i+1}^{k} (\deg c^j - 1)} (c^k \otimes \dots \otimes\partial_q(c^i) \otimes \dots \otimes c^1)$$
for all homogeneous elements $c^i \in RFC^{*}(L_{i-1},L_i)$ for $i=1,\dots, k$. Then we have 
\begin{equation}\label{eq:trivial}
	\widecheck{\mu}^k_B(c^k \otimes \dots \otimes c^1) = \widecheck{\mu}^k_A \circ \partial_q(c^k\otimes \dots \otimes c^1).
\end{equation} 
We will see in the proof of Theorem \ref{thm:ainfinity} that this is a part of the $A_{\infty}$-equation involving $\widecheck{\mu}^1$.

\begin{rmk}\label{rmk:wrappedfukayacategory}
	The restriction of the higher products $\widecheck{\mu}^k$ to $CW^*=A_{>0}^* \oplus B_{>0}^{*+1}q \subset A^*\oplus B^{*+1}q = RFC^*$ induces an $A_{\infty}$-structure on the wrapped Fukaya category of $M$. Indeed, following the idea of \cite{abo-sei10}, for a background class $b
 \in H^2(M,\Z/2)$, we may define the wrapped Fukaya category $\mathcal{W}_b(M)$ of $M$ to be an $A_{\infty}$-category with objects in $\{L_i\}_{i\in I}$. For every pair $i,j \in I$, we define the morphism space from $L_i$ to $L_j$ to be the wrapped Floer complex \eqref{eq:wrappedfloercomplexAB}. Furthermore, from the construction \eqref{eq:product2}, one can deduce that if 
	\begin{equation*}
		(x_w^m)_w q^{e_m} \in \begin{cases} A^{*}_{>0}  & (e_m=0),\\  B^{*+1}_{>0}q& (e_m=1) \end{cases}
	\end{equation*} 
	for all $1\leq m \leq k$, then the higher product $\widecheck{\mu}^k((x^k_w)_w q^{e_k}\otimes \dots \otimes (x^1_w)_w q^{e_1})$ lies in $A^{*}_{>0}  \oplus B^{*+1}_{>0}q$ again. We denote the restriction of the higher product $\widecheck{\mu}^k$ to wrapped Floer complexes by
	$$\mu^k : CW^*(L_{k-1},L_k)\otimes \dots \otimes CW^*(L_0,L_1) \to CW^*(L_0,L_k).$$
	Once the higher products $\widecheck{\mu}^k$ are shown to satisfy the $A_{\infty}$-equation, then the same thing can be said for the higher products $\mu^k$. Similarly, we will denote by $\F_b(M)$ the corresponding compact Fukaya category. We will omit the subscript $b$ from the notation whenever it is not crucial.

	Note that our wrapped Fukaya category is essentially the same as that introduced in \cite{abo-sei10} with some technical differences. Indeed, we have chosen a sequence of Hamiltonians $\{H_{\tau_w}^{L_0,L_1}\}_{w \in \mathbb{Z}}$ for each pair of admissible Lagrangians $(L_0,L_1)$, while in \cite{abo-sei10}, the authors chose one Hamiltonian $H$ with slope $1$ at infinity and used sub-closed 1-forms to consider its multiples $wH$ for $w\in \Z_{> 0}$ when defining the wrapped Floer complex and the higher products.
\end{rmk}

The following lemma shows that the product $\widecheck{\mu}^k$ is well-defined for every $k \in \mathbb{Z}$.
\begin{lem}\label{lem:welldefined}
	Let $k\in \mathbb{Z}_{\geq 1}$. For any tuple $((x^k_{w_k})_{w_k} q^{e_k}, \dots, (x^1_{w_1})_{w_1} q^{e_1}) \in RFC^*(L_{k-1}, L_k) \times \dots \times RFC^*(L_0,L_1)$, the following hold.

	\begin{enumerate}
		\item There are only finitely many $w_0 \in \Z_{>0}$ such that
		\begin{equation}\label{eq:onlyfinitelymanytermsA}
			w_0 = \sum_{i=1}^k w_i +|F| \text{ and }
			\mu^{k,F, (w_0 +1,w_1,\dots, w_k)}_A (x^k_{w_k} q^{e_k} \otimes \dots \otimes x^1_{w_1} q^{e_1}) \neq 0
		\end{equation}
		for some $(w_1, \dots, w_k) \in \mathbb{Z}^k$.
		\item For a given $w_0 \in \Z$, there are only finitely many pairs of tuples $(w_1,\dots ,w_k) \in \Z^{k}$ such that 
		\eqref{eq:onlyfinitelymanytermsA} holds.
		\item There are only finitely many $w_0 \in \Z_{>0}$ such that
		\begin{equation}\label{eq:onlyfinitelymanytermsB}
			w_0 = \sum_{i=1}^k w_i +|F|-1 \text{ and }
			\mu^{k,F, (w_0 ,w_1,\dots, w_k)}_B (x^k_{w_k} q^{e_k} \otimes \dots \otimes x^1_{w_1} q^{e_1}) \neq 0
		\end{equation}
		for some $(w_1, \dots, w_k) \in \mathbb{Z}^k$.
		\item For a given $w_0 \in \Z$, there are only finitely many of tuples $(w_1,\dots ,w_k) \in \Z^{k}$ such that 
		\eqref{eq:onlyfinitelymanytermsB} holds.
	\end{enumerate}
\end{lem}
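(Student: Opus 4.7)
The plan is to observe that all four finiteness assertions reduce to purely combinatorial facts, using only the weight equation \eqref{eq:weight} together with the defining property \eqref{eq:AB} of $A^{\ast}$ and $B^{\ast}$. No analytic input from the popsicle-map moduli spaces is required: existence of a rigid popsicle map only makes the contributing set of tuples smaller, so it suffices to bound the set of weight tuples compatible with the weight equation and with non-vanishing of every input factor. Note in particular that $F = \{ i : e_i = 1 \}$ is determined by the inputs, so $|F|$ is a fixed quantity throughout the argument. For each $i \in \{1, \ldots, k\}$ set
\[
W_i := \sup\{w \in \mathbb{Z} : x^i_w \neq 0\},
\]
which by \eqref{eq:AB} is a finite integer (the case in which some $x^i$ vanishes identically is trivial since every operator is then zero). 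Any tuple $(w_1, \ldots, w_k)$ contributing a non-zero term to $\mu^{k, F, \mathbf{w}}_A$ must satisfy $w_i \leq W_i$ for every $i$, otherwise one of the input tensor factors is zero.

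For parts (1) and (3), the relevant weight equations read $w_0 = \sum_{i=1}^k w_i + |F|$ and $w_0 = \sum_{i=1}^k w_i + |F| - 1$, respectively. Combined with $w_i \leq W_i$ and $|F| \leq k$, this immediately yields $w_0 \leq \sum_i W_i + k$, so only finitely many $w_0 \in \mathbb{Z}_{>0}$ can support a non-zero contribution.

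For parts (2) and (4), fix $w_0 \in \mathbb{Z}$. Then $\sum_i w_i$ is pinned to a fixed integer (namely $w_0 - |F|$ or $w_0 - |F| + 1$), and the inequalities $w_j \leq W_j$ for $j \neq i$ force
\[
w_i \geq (w_0 - |F|) - \sum_{j \neq i} W_j
\]
(and the analogous bound with the $+1$ variant). Each $w_i$ therefore lies in a bounded integer interval, so only finitely many admissible tuples $(w_1, \ldots, w_k)$ exist. The main potential \emph{obstacle} is not analytic at all but notational: one simply needs to unpack the definitions of $\widecheck{\mu}^k_A$, $\widecheck{\mu}^k_B$ and \eqref{eq:AB}, and to recognise that $F$ is fixed by the input exponents $(e_1, \ldots, e_k)$, after which the required finiteness is immediate from elementary inequalities, without any appeal to Gromov compactness or action estimates for popsicle maps.
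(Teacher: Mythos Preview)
Your proof is correct and follows essentially the same approach as the paper's own argument: both extract, from the description \eqref{eq:AB}, an upper bound $W_i$ (the paper's $M_i$) on the weights at which each input is nonzero, and then combine these bounds with the weight equation to bound $w_0$ above and, for fixed $w_0$, to trap each $w_i$ in a finite interval. The only cosmetic difference is that you phrase the bound on $w_0$ via $|F|\le k$, whereas the paper uses the fixed value of $|F|$ directly; both are fine.
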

\begin{proof}
	 We prove the statements (1) and (2) only. Then the statements (3) and (4) can be proved similarly.
	 	
	 	For the statement (1), note that the description of $A^*$ and $B^*$ in \eqref{eq:AB} implies that, for each $i\in \{1,\dots, k\}$, there exists a sufficiently large integer $M_i$ such that $x_{w_i}^{i}=0$ for all $w_i \geq M_i$. Therefore, if $w_0 \geq \sum_{i=1}^k M_i$, then at least one of $w_i$ needs to satisfy $w_i \geq M_i$ to have $w_0 = \sum_{i=1}^k w_i +|F|$. Therefore, if $w_0 \geq \sum_{i=1}^k M_i$ and $w_0 =\sum_{i=1}^k w_i + |F|$, then we have
	 $$x^k_{w_k} q^{e_k} \otimes \dots \otimes x^1_{w_1} q^{e_1} =0$$
 for any tuple $(w_1,\dots, w_k) \in \Z^k$ and any subset $F \subseteq \{1,\dots,k\}$.
	 
	 For the statement (2), observe that, for a given $w_0 \in \mathbb{Z}$, there are only finitely many tuples $(w_1,\dots,w_k) \in \Z^k$ such that $w_0 = \sum_{i=1}^k w_i + |F|$ and $w_i <M_i$ for all $i \in \{1,\dots,k\}$. Indeed, if $w_i < w_0 - \sum_{j\neq i} M_j -|F|$ for some $i\in \{1,\dots, k\}$, then $w_0 = \sum_{i=1}^k w_i + |F|$ does not hold unless $w_j \geq M_j$ for at least one $j \in \{1,\dots, k\} \setminus \{i\}$.
\end{proof}

Then we finally have the following theorem.
\begin{thm}[{cf. \cite[Lemma 4.9 and Lemma 4.11]{ggv22}}]\label{thm:ainfinity}
 There exist products $\widecheck{\mu}^k$ on Rabinowitz Floer complexes such that
	\begin{enumerate}
		\item $\widecheck{\mu}^k$ satisfy the $A_\infty$-equation. i.e. for any $k \in \mathbb{Z}_{\geq 1}$ and any tuple $((x^k_{w_k})_{w_k} q^{e_k}, \dots, (x^1_{w_1})_{w_1} q^{e_1}) \in RFC^*(L_{k-1}, L_k) \times \dots \times RFC^*(L_0,L_1)$,
		\begin{equation}\label{eq:ainfinity}
			\begin{split}
			\sum_{1\leq i <j \leq k} (-1)^{\#_{i}}&\widecheck{\mu}^{k+1-(j-i)}\Big((x^k_{w_k})_{w_k}q^{e_k} \otimes \dots \otimes (x^{j+1}_{w_{j+1}})_{w_{j+1}}q^{e_{j+1}} \otimes  \\
		&\widecheck{\mu}^{j-i}\left((x^j_{w_j})_{w_j} q^{e_j}\otimes \dots \otimes (x^{i+1}_{w_{i+1}})_{w_{i+1}} q^{e_{i+1}}\right) \otimes (x^i_{w_i})_{w_i}q^{e_i} \otimes \dots \otimes (x^1_{w_1})_{w_1} q^{e_1}\Big)=0,
			\end{split}
		\end{equation}
		where $\#_i = \sum_{l=1}^{i} (\deg x_l -1)$ for each $1\leq i \leq k$.
		
		\item The natural inclusions $i : CW^*(L_i,L_j) \hookrightarrow RFC^*(L_i,L_j)$ induce an $A_{\infty}$-homomorphism
		$$ \bigoplus_{ i,j} CW^*(L_i,L_j) \to \bigoplus_{i,j} RFC^*(L_i,L_j)$$
		for any collection $\{L_i\}$ of admissible Lagrangians. In particular, the inclusions induce an $A_{\infty}$-functor from the wrapped Fukaya category $\W (M)$ to the Rabinowitz Fukaya category $\mathcal{RW}(M)$.
	\end{enumerate}
	
\end{thm}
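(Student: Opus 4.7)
The plan is to prove the $A_\infty$-equations \eqref{eq:ainfinity} by the standard strategy of identifying the left-hand side with the signed count of the boundary of the one-dimensional components of $\overline{\mathcal{R}}^{k+1,F,\mathbf{w}}(x_0,x_1,\dots,x_k)$. By Theorem \ref{thm:moduli} together with the no-escape Lemma \ref{lem:noescape}, these moduli spaces are compact one-manifolds whose boundary strata are classified in \eqref{eq:codimension1strataofmaps1}--\eqref{eq:codimension1strataofmaps4}. Since the signed boundary count of a compact one-manifold vanishes, summing the resulting chord-level identities over all $(F,\mathbf{w})$ satisfying \eqref{eq:weight} will yield the $A_\infty$-equation for $\widecheck{\mu}^k$.

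The first step is to use Lemma \ref{lem:trivial} to discard the stratum \eqref{eq:codimension1strataofmaps2}, reducing the analysis to the combined stratum \eqref{eq:codimension1strataofmaps} together with the semi-stable strata \eqref{eq:codimension1strataofmaps3} and \eqref{eq:codimension1strataofmaps4}; these last two produce the terms in which $\widecheck{\mu}^1$ appears as the outer or inner operation. Each such boundary point then corresponds to a composition of two $\mu^{\bullet,\bullet,\bullet}_A$-operators, giving the chord-level $A_\infty$-identity. Next, I will assemble these identities into \eqref{eq:ainfinity} by tracking the decomposition $RFC^* = A^* \oplus B^{*+1}q$ and the interplay between $\widecheck{\mu}^k_A$ and $\widecheck{\mu}^k_B$. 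The relation \eqref{eq:trivial}, namely $\widecheck{\mu}^k_B = \widecheck{\mu}^k_A \circ \partial_q$, reduces everything to identities for $\widecheck{\mu}^k_A$: summing over all sprinkle subsets $F \subseteq \{1,\dots,k\}$ and keeping track of which inputs carry the formal variable $q$ (equivalently, which sprinkle is singled out in the corresponding popsicle) reproduces both the $A$- and $B$-components of the equation. Well-definedness of the resulting infinite sums will follow from a direct extension of Lemma \ref{lem:welldefined}: for any fixed output weight, only finitely many input weight tuples contribute nontrivially.

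The case $k=1$ of the $A_\infty$-equation is subtle because the differential $\widecheck{\mu}^1$ specified in \eqref{eq:rabinowitzdifferential} contains pieces---the bridging continuation $c$ and the identity cancellations $-b$ visible in \eqref{eq:differential1} and \eqref{eq:differential2}---that do not arise from the schematic \eqref{eq:product2} applied to $k=1$. These extra pieces correspond respectively to the one-sprinkle popsicle moduli $\mathcal{R}^{2,\{1\},(w+1,w)}(x_0,x_1)$ defining the continuation maps \eqref{eq:continuation}, and to the telescoping mapping-cone structure hardwired into the definitions of $CW^*$ and $CW_*$; once identified, they integrate $\widecheck{\mu}^1$ into the uniform popsicle framework and allow its contributions on both sides of \eqref{eq:ainfinity} to be matched with the semi-stable strata \eqref{eq:codimension1strataofmaps3} and \eqref{eq:codimension1strataofmaps4}. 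Part (2) of the theorem is then essentially automatic: from Remark \ref{rmk:wrappedfukayacategory}, the products $\widecheck{\mu}^k$ restricted to $CW^* = A^*_{>0} \oplus B^{*+1}_{>0}q$ again land in $CW^*$, because positive-slope asymptotic inputs force a positive-slope asymptotic output, so the inclusion $CW^* \hookrightarrow RFC^*$ is a strict $A_\infty$-embedding with vanishing higher components.

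I expect the principal obstacle to be the signed matching in the $k=1$ case: precisely identifying the bridging continuation and the identity cancellations with the corresponding popsicle boundary contributions while keeping all sign conventions consistent, since these terms are the only place where the otherwise uniform correspondence between $\widecheck{\mu}^k$ and popsicle counts breaks down. A secondary technicality is the careful verification of orientation signs in the gluing construction, which amounts to adapting the conventions of \cite{abo-sei10} to the mapping-cone setting.
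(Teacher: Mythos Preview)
Your proposal is correct and follows essentially the same approach as the paper: both arguments identify the $A_\infty$-equation with the vanishing boundary count of the one-dimensional compactified moduli spaces, discard the non-injective-flavor stratum via Lemma \ref{lem:trivial}, invoke Lemma \ref{lem:welldefined} for finiteness, and derive part (2) from Remark \ref{rmk:wrappedfukayacategory}. The paper organizes the argument slightly more cleanly by first reducing to single-weight inputs $(x_i)_{w_i=a_i}q^{e_i}$, then projecting the $A_\infty$-equation separately onto the $A$- and $B$-components via $\mathrm{pr}_{A,a_0}$ and $\mathrm{pr}_{B,a_0}$, and finally observing that the $B$-projected equation \eqref{eq:ainfinityB} is simply a signed sum over $f\in F$ of the $A$-projected equation \eqref{eq:ainfinityA} with $F$ replaced by $F\setminus\{f\}$; this makes the role of \eqref{eq:trivial} and the handling of the $\widecheck{\mu}^1$ terms more transparent than your more narrative treatment, but the underlying mathematics is the same.
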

\begin{proof}
Let $(e_1,\dots, e_k) \in \prod_{i=1}^k \{0,1\}$ and let $F= \{i\in \{1,\dots,k\}| e_i =1\}$ be the flavor determined by the tuple $(e_1,\dots,e_k)$. 
Also, for each $1\leq i \leq k$, let $(x^i_{w_i})_{w_i}q^{e_i} \in  RFC^*(L_{i-1},L_i)$ be an element given by
$$x^i_{w_i} = \begin{cases} x_{i} &(w_i = a_i)\\ 0 &(w_i \neq a_i) \end{cases}$$
for some $a_i \in \Z$ and $x_i \in \mathcal{I}(L_{i-1}, L_i;H_{\tau_{a_i}})$, which we will denote by $(x_i)_{w_i =a_i}q^{e_i}$ for convenience.

Consider the $A_{\infty}$-equation \eqref{eq:ainfinity} for the tuple $((x_k)_{w_k=a_k}q^{e_k},\dots, (x_1)_{w_1=a_1}q^{e_1})$. For the integer $a_0 = \sum_{i=1}^k a_i +|F|$, its image under $\mathrm{pr}_{A,a_0}$ \eqref{eq:prAB} is given by
\begin{equation}\label{eq:ainfinityA}
	\begin{split}
		&\sum_{0\leq i<j \leq k}(-1)^{\#_i} \mu_A^{k+1-(j-i),F_{0},\mathbf{w}_0}(x_k q^{e_k}\otimes \dots \otimes \mu_A^{j-i, F_{1},\mathbf{w}_1}(x_{j}q^{e_j} \otimes \dots \otimes x_{i+1}q^{e_{i+1}}) \otimes \dots x_1 q^{e_1} ) + \\
		 & \sum_{0\leq i<j \leq k} (-1)^{\#_i}\mu_A^{k+1-(j-i),F_{0}\cup \{*\},\mathbf{w}'_0}(x_k q^{e_k}\otimes \dots \otimes \mu_B^{j-i, F_{1},\mathbf{w}_1}(x_{j}q^{e_j} \otimes \dots \otimes x_{i+1}q^{e_{i+1}})q \otimes \dots x_1 q^{e_1} ) =0,
	\end{split}
\end{equation}
where $F_1=\{i+1,\dots, j\}\cap F$ and $F_0 = F\setminus F_1$.
 Similarly, for the integer $a_0 = \sum_{i=1}^k a_i +|F|-1$, its image under $\mathrm{pr}_{B,a_0}$ is given by
\begin{equation}\label{eq:ainfinityB}
	\begin{split}
		&\sum_{0\leq i<j \leq k} (-1)^{\#_i} \mu_B^{k+1-(j-i),F_{0},\mathbf{w}_0}(x_k q^{e_k}\otimes \dots \otimes \mu_A^{j-i, F_{1},\mathbf{w}_1}(x_{j}q^{e_j} \otimes \dots \otimes x_{i+1}q^{e_{i+1}}) \otimes \dots x_1 q^{e_1} )+\\
		& \sum_{0\leq i<j \leq k} (-1)^{\#_i} {\mu}_B^{k+1-(j-i),F_{0}\cup \{*\},\mathbf{w}'_0}(x_k q^{e_k}\otimes \dots \otimes \mu_B^{j-i, F_{1},\mathbf{w}_1}(x_{j}q^{e_j} \otimes \dots \otimes x_{i+1}q^{e_{i+1}})q \otimes \dots x_1 q^{e_1} ) =0,
	\end{split}
\end{equation}
where the flavors $F_1$ and $F_0$ are given as above.

Lemma \ref{lem:welldefined} implies that, for an arbitrarily given $((x^k_{w_k})_{w_k} q^{e_k}, \dots, (x^1_{w_1})_{w_1} q^{e_1}) \in RFC^*(L_{k-1}, L_k) \times \dots \times RFC^*(L_0,L_1)$, the projection image of the corresponding $A_{\infty}$-equation \eqref{eq:ainfinity} under $\mathrm{pr}_{A,a_0}$(respectively $\mathrm{pr}_{B,a_0}$) is a finite sum of \eqref{eq:ainfinityA} (respectively \eqref{eq:ainfinityB}) for some $((x^k_{a_k})_{w_k=a_k}q^{e_k},\dots,(x^1_{a_1})_{w_1=a_1}q^{e_1})$. This means that it is enough to show that both \eqref{eq:ainfinityA} and \eqref{eq:ainfinityB} hold to prove the statement (1).

Let us first show that \eqref{eq:ainfinityA} holds for any tuple $((x_k)_{w_k=a_k}q^{e_k},\dots, (x_1)_{w_1=a_1}q^{e_1})$ given as above. Consider all the terms of the left hand side of \eqref{eq:ainfinityA} involving at least one of $\widecheck{\mu}^1$. Some of these terms cancel each others for a trivial reason due to \eqref{eq:trivial}. Next, for a given $x_0 \in \mathcal{I}(L_0,L_k;H_{\tau_{a_0}})$, the coefficient of $x_0$ in the left hand side of \eqref{eq:ainfinityA} except those canceled due to \eqref{eq:trivial} is given by the count of rigid stable broken popsicle maps of \eqref{eq:codimension1strataofmaps}. This sums up to zero since the union of \eqref{eq:codimension1strataofmaps} and \eqref{eq:codimension1strataofmaps2} is given by the codimension-one strata of the one-dimensional compactified moduli space $\overline{\mathcal{R}}^{k+1,F,\mathbf{w}}$, but the signed sum of rigid elements of \eqref{eq:codimension1strataofmaps2} is zero as explained above. This proves that \eqref{eq:ainfinityA} holds.

To show that \eqref{eq:ainfinityB} holds, observe that the left hand side of \eqref{eq:ainfinityB} is the (signed) sum of the left hand side of \eqref{eq:ainfinityA} with $F$ replaced by $F^{f} = F\setminus \{f\}$ over all $f\in F$. Since we have already shown that \eqref{eq:ainfinityA} holds for any flavor $F$, the assertion follows.
 
 The statement (2) is a consequence of Remark \ref{rmk:wrappedfukayacategory}.
 \end{proof}

\begin{exa}
We describe the product when $k=2$. 
In particular, consider the following product when two inputs contain the formal variable $q$:
\begin{equation*}
		\widecheck{\mu}^2\left( (x_2)_{w_2=a_2} q \otimes (x_1)_{w_1=a_1} q \right) = \widecheck{\mu}^2_A\left((x_2)_{w_2=a_2} q \otimes (x_1)_{w_1=a_1} q\right)  +\widecheck{\mu}^2_B\left((x_2)_{w_2=a_2} q \otimes (x_1)_{w_1=a_1} q \right)q. 
\end{equation*}

Then, the first term 
$\widecheck{\mu}^2_A\left((x_2)_{w_2=a_2} q \otimes (x_1)_{w_1=a_1} q\right)$ is given by $\mu^{2,\{1,2\},(w_{1}+w_{2}+2,w_{1},w_{2})}(x_2 q \otimes x_1 q)$ (up to sign)  and it is computed by counting rigid stable popsicle maps as in Figure \ref{fig:mu2} (A). The second one $\widecheck{\mu}^2_B\left((x_2)_{w_2=a_2} q \otimes (x_1)_{w_1=a_1} q \right)q$ consists of two terms (up to sign)
$$\mu^{2,\{ 2 \},(w_{1}+w_{2}+1,w_{1},w_{2})}_A (x_2 q \otimes x_1 ) \mbox{ and } \mu^{2,\{ 1 \},(w_{1}+w_{2}+1,w_{1},w_{2})}_A (x_2 \otimes x_1 q)$$
and they are defined by popsicles as in Figure \ref{fig:mu2} (B) and (C). These come from forgetting a sprinkle on each geodesic.

\begin{figure}[h]
\begin{subfigure}[t]{0.31\textwidth}
\includegraphics[scale=1]{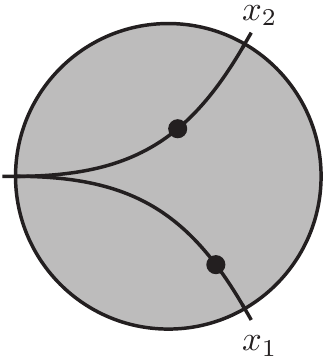}
\centering
\caption{ }
\end{subfigure}
\begin{subfigure}[t]{0.31\textwidth}
\includegraphics[scale=1]{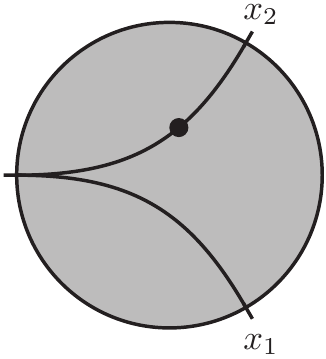}
\centering
\caption{ }
\end{subfigure}
\begin{subfigure}[t]{0.31\textwidth}
\includegraphics[scale=1]{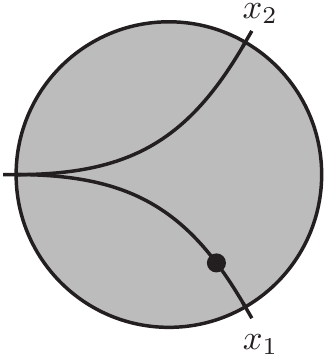}
\centering
\caption{ }
\end{subfigure}
\centering
\caption{ }
\label{fig:mu2}
\end{figure}
\end{exa}

\begin{rmk}
As mentioned in the introduction, our $A_\infty$-structures $\widecheck{\mu}^k$ are defined using linear Hamiltonians rather than quadratic Hamiltonians as in \cite{ggv22}. We adapted the idea of the construction of $A_{\infty}$-structures on the wrapped Fukaya category in \cite{abo-sei10} using linear Hamiltonians to the mapping cone construction of Rabinowitz Fukaya category introduced in \cite{cie-oan20,ggv22} to construct an $A_\infty$-structure on the Rabinowitz Fukaya category. Therefore we expect that our Rabinowitz Fukaya category is quasi-equivalent to the Rabinowitz Fukaya category constructed in \cite{ggv22}, which is parallel to the fact that the wrapped Fukaya category constructed in \cite{abo-sei10} defined using linear Hamiltonians is considered to be quasi-equivalent to the wrapped Fukaya category constructed in \cite{abo10,abo11} defined using quadratic Hamiltonians. Moreover, our Rabinowitz Fukaya category shares a lot of  common features with the Rabinowitz Fukaya category introduced in \cite{ggv22}. In particular, Theorem \ref{thm:ainfinity} can be compared with \cite[Lemma 4.9 and Lemma 4.11]{ggv22}.
\end{rmk}

\section{Calabi--Yau property of derived Rabinowitz Fukaya category}\label{section:cy}
The main goal of this section is to prove that the derived Rabinowitz Fukaya category of a Liouville domain $M$ of dimension $2n$ is $(n-1)$-Calabi--Yau assuming the condition \eqref{condition3} of Theorem \ref{thm:maintheorem}.

 To prove the assertion, we first show that there is a non-degenerate paring 
\begin{equation*}
	\beta^k=\beta^k_{L_i,L_j} : RFH^{n-1-k}(L_j,L_i) \times RFH^{k} (L_i,L_j) \to \mathbb{K},
\end{equation*}
for any pair $i,j \in I$ and any $k\in \mathbb{Z}$. We further show that this non-degenerate pairing can be extended to the triangulated envelope of the Rabinowitz Fukaya category.

Note that, for a given chain complex $C$ over a field $\mathbb{K}$, we denote its dual complex $\Hom_{\mathbb{K}}(C,\mathbb{K})$ by $C^\vee$. Then our strategy to show the existence of a non-degenerate pairing $\beta$ is to construct chain maps
\begin{itemize}
	\item $\overline{\alpha}: CW^*(L_i,L_j) \to CW_*(L_j,L_i)^{\vee}$,
	\item $\overline{\gamma} : CW_{n-1-*}(L_i,L_j) \to (CW^{n-1-*}(L_j,L_i))^{\vee}$ and 
	\item $\overline{\beta} : RFC^*(L_i,L_j) \to (RFC^{n-1-*}(L_j,L_i))^{\vee}$
\end{itemize}
for $i,j \in I$ such that the following diagram commutes:
\begin{equation}\label{eq:morphismbewteenses}
	\xymatrix{ 0\ar[r] & CW^*(L_i,L_j)\ar[r]^{i} \ar[d]^{\overline{\alpha}}&RFC^*(L_i,L_j) \ar[r]^{p} \ar[d]^{\overline{\beta}}&CW_{n-1-*}(L_i,L_j)\ar[r] \ar[d]^{\overline{\gamma}}&0 \\ 0\ar[r]& CW_*(L_j,L_i)^{\vee} \ar[r]^{p^*} &RFC^{n-1-*}(L_j,L_i)^{\vee} \ar[r]^{i^*}& CW^{n-1-*}(L_j,L_i)^{\vee} \ar[r]&0. }
\end{equation}
We will further show that $\overline{\alpha}$ is a quasi-isomorphism if the condition \eqref{condition3} of Theorem \ref{thm:maintheorem} is satisfied and $\overline{\gamma}$ is a quasi-isomorphism even without such an assumption. Then the commutativity of the above diagram and the five-lemma imply that $\overline{\beta}$ is a quasi-isomoprhism as well. 
Since taking dual and taking homology commute over a field, we have 
$$ RFH^{n-1-*}(L_j,L_i)^{\vee} = H^*( RFC^{n-1-*}(L_j, L_i)^{\vee}).$$

This implies that the pairing
$$ \beta^k : RFH^{n-1-k}(L_j,L_i) \times RFH^{k}(L_i,L_j) \to \mathbb{K}, k\in \Z$$
defined by
$$ \beta^k([y],[x]) = \overline{\beta}(x)(y)$$
for all cycles $x\in RFC^{k}(L_i,L_j)$ and $y\in RFC^{n-1-k}(L_j,L_i)$
is non-degenerate.

\subsection{Construction of $\overline{\alpha}$ and $\overline{\gamma}$}
Let us begin the construction of the chain maps $\overline{\alpha}$ and $\overline{\gamma}$. First, for any admissible Lagrangian $L$ of $M$, let $\epsilon_{L} >0$ be sufficiently small so that
\begin{equation*}
	\epsilon_{L} < \mathrm{min} ~\mathrm{Spec} (\partial L, \partial L) \cap \mathbb{R}_{>0}.
\end{equation*}

The proof of Lemma \ref{lem:existenceoffloerdata} implies that the Floer data $\{(\tau^{L_0,L_1}_{w}, H_{\tau^{L_0,L_1}_w},J^{L_0,L_1}_w)\}_{w\in \mathbb{Z}}$ associated to Lagrangian pairs (Definition \ref{dfn:floerdataforlagrangian}) have been chosen in such a way that the following two additional conditions are satisfied:
\begin{enumerate}[label=(\alph*)]
	\setcounter{enumi}{6}
	\item $-\epsilon_L < \tau^{L,L}_{0} <0$ for all admissible Lagrangian $L$. 
	\item $H_{\tau^{L,L}_0}|_L$ and $H_{\tau^{L,L}_1}|_L$ are $C^2$-small Morse functions for any admissible Lagrangian $L$.
\end{enumerate}

Denoting $\tau^{L,L}_w$ by $\tau_w$ for $w\in \mathbb{Z}$, the Floer complex $CF^*(L,L;H_{\tau_0})$ (respectively $CF^*(L,L;H_{\tau_1})$) can be identified with the cohomological Morse complex $CM^*(L;H_{\tau_0}|_L)$ (respectively $CM^*(L;H_{\tau_1}|_L)$), as argued in \cite[Proposition 2.10]{cie-oan18} and \cite[Section 5]{rit13}.

Furthermore, the Morse complex $CM^*(L;H_{\tau_0}|_L)$ is canonically identified with the dual of the Morse complex $CM^{n-*}(L;-H_{\tau_0}|_L)$ by definition. Namely, there is an identification:
\begin{equation*}\label{eq:dual}
	CM^{*}(L;H_{\tau_0}|_L)^{\vee} = CM^{n-*}(L;-H_{\tau_0}|_L).
\end{equation*}

	 But, the homology of the latter $CM^{n-*}(L;-H_{\tau_0}|_L)$ is isomorphic to $H^{n-*}(L;\mathbb{K})$ or $H_*(L,\partial L;\mathbb{K})$ as $-H_{\tau_0}|_L$ has a positive slope less than $\epsilon_L$ at infinity. Consequently, the homology of $CM^*(L;H_{\tau_0}|_L)$ is isomorphic to $H^*(L,\partial L;\mathbb{K})$ or $H_{n-*}(L;\mathbb{K})$.

Let $\psi_L \in CM^{n}(L;H_{\tau_0}|_L)^{\vee} = CM^{0}(L;-H_{\tau_0}|_L)$ be a cycle that represents the unit class $1 \in H^0(L;\mathbb{K})$ or the relative fundamental class $[L,\partial L] \in H_n(L,\partial L;\mathbb{K})$. Equipping the ground field $\mathbb{K}$ with the trivial differential, this gives rise to a chain map $\langle [L,\partial L], -  \rangle : CM^{*}(L;H_{\tau_0}|_L) \to \mathbb{K}$ defined by
\begin{equation*}
\langle [L,\partial L], x \rangle = \psi_L(x).
\end{equation*}

Now we are ready to define chain maps
\begin{equation*}
	\begin{split}
		\alpha &=\alpha_{L_0,L_1} : CW_*(L_1,L_0) \otimes CW^*(L_0,L_1) \to \mathbb{K},\\
		\gamma &=\gamma_{L_0,L_1} : CW^*(L_1,L_0) \otimes CW_*(L_0,L_1) \to \mathbb{K}.
	\end{split}
\end{equation*}

Here we equip the tensor product $CW_*(L_1,L_0)\otimes CW^*(L_0,L_1)$ with the differential $\delta'$ given by
\begin{equation*}\label{eq:tensordifferential}
	\delta'(y\otimes x) =  (-1)^{\deg x} \partial(y)\otimes x + y\otimes \delta(x)
\end{equation*}
for all homogeneous elements $x \in CW^*(L_0,L_1)$ and $y\in CW_*(L_1,L_0)$ and equip the tensor product $CW^*(L_1,L_0)\otimes CW_*(L_0,L_1)$ with a differential defined similarly. This definition of $\delta'$ ensures that the maps $\alpha$ and $\gamma$ defined below are chain maps. It can be shown that the differential $\delta'$ is equivalent to the more usual differential on a tensor product in the sense that their associated cochain complexes are canonically chain isomorphic.

Recall from \eqref{eq:ses} that there exist the inclusion map
$$ i: CW^*(L_0,L_1) \to RFC^*(L_0,L_1)$$
and the quotient map
$$ p: RFC^*(L_0,L_1) \to CW_{n-1-*}(L_0,L_1),$$
which are chain maps. Even though there is no chain map from $CW_{n-1-*}(L_0,L_1)$ to $RFC^*(L_0,L_1)$ that is the inverse of $p$, $CW_{n-1-*}(L_0,L_1)$ is still a subspace of $RFC^*(L_0,L_1)$. Let us denote the inclusion map by
$$ p^{-1} : CW_{n-1-*}(L_0,L_1) \to RFC^*(L_0,L_1),$$
which is also a right inverse of $p$.

We define the map $\alpha$ and $\gamma$ 
\begin{align*}
	\alpha( y\otimes x)& = \left\langle [L_0,\partial L_0], \pi \circ p \circ \widecheck{\mu}^2 ( p^{-1}(y)\otimes i(x))\right\rangle,\\
	\gamma(x\otimes y) &= \left\langle [L_0,\partial L_0], \pi \circ p \circ \widecheck{\mu}^2 ( i(x)\otimes p^{-1}(y))\right\rangle
\end{align*}
for $y \in CW_{*}(L_1,L_0)$ and $x \in CW^*(L_0,L_1)$, where 
$\pi : CW_{n-*}(L_0,L_0) \to CF^*(L_0,L_0;H_{\tau_0})\cong CM^*(L_0;H_{\tau_0}|_{L_0})$ is the projection map discussed in \eqref{eq:projection}. Then we have:
\begin{lem}\label{lem:chainmap}
	Both $\alpha$ and $\gamma$ are chain maps.
\end{lem}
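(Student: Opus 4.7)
The plan is to verify directly that $\alpha\circ\delta'=0$ and $\gamma\circ\delta'=0$ by invoking the $k=2$ instance of the $A_\infty$-equation \eqref{eq:ainfinity} for $(\widecheck{\mu}^1,\widecheck{\mu}^2)$, using the chain-level properties of the constituent maps together with the fact that the splitting $p^{-1}$ fails to be a chain map only through a term of the form $i\circ c$.

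First I set up a short dictionary. The map $i$ is a chain map by construction. Reading off the matrix form of $\widecheck{\mu}^1$ in \eqref{eq:rabinowitzdifferential}, we obtain
\[
\widecheck{\mu}^1\bigl(p^{-1}(y)\bigr) \;=\; -p^{-1}(\partial y) \;+\; i\bigl(c(y)\bigr),
\]
so the defect of $p^{-1}$ is absorbed by $i$. Next, $p$ is a chain map with $p\circ i=0$, and $\pi$ is a chain map with respect to the sign-modified differential $\delta'_0$ on $CF^*(L_0,L_0;H_{\tau_0})$ recalled just before \eqref{eq:projection}. Finally, $\psi_{L_0}$ has been chosen to represent a cohomology class and is therefore a cocycle in the dual complex, so it annihilates every $\delta'_0$-coboundary.

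Applying the $k=2$ $A_\infty$-relation to $a_2=p^{-1}(y)$ and $a_1=i(x)$, and then substituting $\widecheck{\mu}^1(i(x))=i(\delta x)$ and the identity above, yields an equation in $RFC^*(L_0,L_0)$ of the schematic form
\[
\widecheck{\mu}^1\widecheck{\mu}^2\bigl(p^{-1}(y)\otimes i(x)\bigr)
 \pm \widecheck{\mu}^2\bigl(p^{-1}(\partial y)\otimes i(x)\bigr)
 \pm \widecheck{\mu}^2\bigl(p^{-1}(y)\otimes i(\delta x)\bigr)
 \pm \widecheck{\mu}^2\bigl(i(c(y))\otimes i(x)\bigr) \;=\; 0.
\]
I then push this identity through $\langle[L_0,\partial L_0],-\rangle\circ\pi\circ p$. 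The last term vanishes because $\widecheck{\mu}^2(i(c(y))\otimes i(x))$ lies in $i(CW^*(L_0,L_0))$ by Theorem \ref{thm:ainfinity}(2) and $p\circ i=0$; the first term vanishes because $\pi\circ p$ is a chain map to $(CF^*(L_0,L_0;H_{\tau_0}),\delta'_0)$ and $\psi_{L_0}$ is a cocycle there. The remaining two terms are, up to signs, exactly $\alpha(\partial y\otimes x)$ and $\alpha(y\otimes\delta x)$, proving $\alpha\circ\delta'=0$. The argument for $\gamma$ is structurally identical, taking $a_2=i(x)$ and $a_1=p^{-1}(y)$ and using the same two vanishing principles; the roles of the two inputs in the tensor product are simply exchanged.

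The only real obstacle is sign bookkeeping: the entries of the matrix $\widecheck{\mu}^1$, the Koszul factor $(-1)^{\deg x}$ in the tensor differential $\delta'$, the exponents $\#_i$ governing the $A_\infty$-relation, and the degree shift built into $p^{-1}$ all interact. The qualitative cancellation is robust, but one must verify carefully that the two surviving terms reassemble into $\alpha(\delta'(y\otimes x))$ with precisely the sign $(-1)^{\deg x}$ prescribed by $\delta'$.
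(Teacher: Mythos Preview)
Your proposal is correct and follows essentially the same route as the paper: both arguments feed the $k=2$ $A_\infty$-relation into $p^{-1}(y)\otimes i(x)$, kill the defect of $p^{-1}$ via $p\circ i=0$ together with Theorem~\ref{thm:ainfinity}(2), and absorb the remaining $\widecheck{\mu}^1\widecheck{\mu}^2$ term using that $p$, $\pi$, and $\langle[L_0,\partial L_0],-\rangle$ are chain maps. The only organisational difference is that the paper first proves the intermediate map $\eta=p\circ\widecheck{\mu}^2(p^{-1}(-)\otimes i(-))$ is a chain map (splitting $y=y'+y''$ according to whether it has a $CF^{n-*}(L_1,L_0;H_{\tau_0})$-component, since $c$ vanishes on $y'$), whereas you keep $y$ intact via the uniform identity $\widecheck{\mu}^1(p^{-1}(y))=-p^{-1}(\partial y)+i(c(y))$ and push straight through to $\mathbb{K}$; this is slightly cleaner. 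Your caveat about signs is fair: the paper does carry them through explicitly in \eqref{eq:longargument1}, and that is the one place where your sketch would need to be completed.
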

\begin{proof}
	Let us prove the assertion for $\alpha$ only since the other one can be proved similarly.
	Since the operators $\langle [L_0,\partial L_0] , - \rangle$ and $\pi$ are already chain maps, it suffices to show that the map $\eta : CW_*(L_1,L_0) \otimes CW^*(L_0,L_1) \to CW_*(L_0,L_0)$ defined by
	$$ \eta (y\otimes x) = p \circ \widecheck{\mu}^2( p^{-1}(y)\otimes i(x))$$
	is a chain map. 
	
	Let $\pi^{-1} : CF^{n-*}(L_1,L_0;H_{\tau_0}) \to CW_*(L_1,L_0)$ be the inclusion map, which is also a right inverse of $\pi$. Then every element $y\in CW_*(L_1,L_0)$ is the sum
	$$ y = y' + y'' = (y - \pi^{-1} (\pi(y))) + \pi^{-1} (\pi(y))$$
	of an element $y' =y - \pi^{-1} (\pi(y))$ that does not involve any summand from $CF^{n-*}(L_1,L_0;H_{\tau_0}) \subset CW_*(L_1,L_0)$ and an element $y'' = \pi^{-1}(\pi(y)) \in CF^{n-*}(L_1,L_0;H_{\tau_0}) \subset CW_*(L_1,L_0)$. Hence it also suffices to show that the assertion holds for such elements $y'$ and $y''$ separately.
	
	Let us first show that the assertion holds for $y'\otimes x$, for which $y'$ does not involve any summand from $CF^{n-*}(L_1,L_0;H_{\tau_0})$. Indeed, for such elements $y'$, one can observe
	$$ \widecheck{\mu}^1 \circ p^{-1}(y') = - p^{-1} \circ \partial(y')$$
	from the definition of the differential $\partial$ on $CW_*(L_1,L_0)$ and \eqref{eq:rabinowitzdifferential}.
	Therefore, we have
	\begin{equation}\label{eq:longargument1}
		\begin{split}
			\eta \circ \delta'(y'\otimes x)  &= (-1)^{\deg x} \eta ( \partial(y') \otimes x) + \eta( y'\otimes \delta(x)) \\
			&= (-1)^{\deg x} p\circ\widecheck{\mu}^2(p^{-1}\left(\partial(y')) \otimes i(x)\right)+ p\circ \widecheck{\mu}^2\left( p^{-1}(y') \otimes \delta(i(x)) \right)\\
			&= (-1)^{\deg x -1} p\circ \widecheck{\mu}^2\left( \widecheck{\mu}^1(p^{-1}(y')) \otimes i(x) \right) +p\circ \widecheck{\mu}^2 \left( p^{-1}(y') \otimes \widecheck{\mu}^1(i(x)) \right)\\
			&= p \left( (-1)^{\deg x-1}\widecheck{\mu}^2\left( \widecheck{\mu}^1(p^{-1}(y')) \otimes i(x)\right)  + \widecheck{\mu}^2 \left( p^{-1}(y') \otimes \widecheck{\mu}^1(i(x)) \right) \right)\\
			&= - p \circ \widecheck{\mu}^1 \circ \widecheck{\mu}^2 \left( p^{-1}(y') \otimes i(x) \right)\\
			&= \partial \circ p \circ \widecheck{\mu}^2 \left( p^{-1}(y') \otimes i(x) \right)\\
			&= \partial\circ \eta(y'\otimes x).
		\end{split}
	\end{equation}
	Here the fifth equality follows from the $A_{\infty}$-equation between $\widecheck{\mu}^{k}$'s. This proves the assertion.
	
	Now, for all homogeneous elements $y'' \in CF^{n-1-*}(L_1,L_0;H_{\tau_0}) \subset CW_*(L_1,L_0)$, observe
	$$ \widecheck{\mu}^1 \circ p^{-1}(y'') = -p^{-1} \circ \partial (y'') + (-1)^{\deg y''+1} i\circ c_{\tau_0,\tau_1}(y''),$$
	where $i\circ c_{\tau_0,\tau_1} : CF^{n-1-*}(L_1,L_0;H_{\tau_0}) \to  RFC^{n-1-*}(L_1,L_0)$ factors through
	$$CF^{n-1-*}(L_1,L_0;H_{\tau_0})\xrightarrow{c_{\tau_0,\tau_1}} CF^{n-1-*}(L_1,L_0;H_{\tau_1}) \subset CW^{n-1-*}(L_1,L_0)\xrightarrow{i} RFC^{n-1-*}(L_1,L_0).$$
	
	The above argument \eqref{eq:longargument1} can be applied here with the only difference that a multiple of the following term
	$$ p \circ \widecheck{\mu}^2 \left( i(c_{\tau_0,\tau_1}(y'')) \otimes i(x) \right)$$
	is added to the lines following from the third line in \eqref{eq:longargument1}.
	But the above term is zero since the inclusion $i$ induces an $A_\infty$-homomorphism as argued in Theorem \ref{thm:ainfinity} and hence the above term equals
	$$ 0=p\circ i \circ \mu^2(c_{\tau_0,\tau_1}(y'')\otimes x).$$
	This completes the proof.
\end{proof}

	\begin{rmk}
		To understand Lemma \ref{lem:chainmap} more intuitively, recall that, for any oriented manifold $L$ with boundary, the singular cochain complex $C^*(L;\mathbb{K})$ together with the coboundary map and the cup product defines a dg algebra and the relative singular cochain complex $C^*(L,\partial L;\mathbb{K})$ carries a dg module structure over $C^*(L;\mathbb{K})$ given by the cup product:
		\begin{equation}\label{eq:dgmodule}
		C^*(L,\partial L;\mathbb{K}) \otimes C^*(L;\mathbb{K}) \to C^*(L,\partial L;\mathbb{K}).
		\end{equation}
		The map $\eta : CW_*(L_1,L_0) \otimes CW^*(L_0,L_1) \to CW_*(L_0,L_0)$ defined in the proof of Lemma \ref{lem:chainmap} is a Floer-theoretic enhancement of the cup product \eqref{eq:dgmodule}. Since the cup product is a chain map, it is reasonable to expect $\eta$ to be a chain map as well.
	\end{rmk}

As a consequence of Lemma \ref{lem:chainmap}, $\alpha$ and $\gamma$ induce the pairings
\begin{align*}
	\alpha^k : HW_k(L_1,L_0) \times HW^k(L_0,L_1)& \to \mathbb{K},\\
	\gamma^k: HW^k(L_1,L_0) \times HW_k(L_0,L_1)& \to \mathbb{K}
\end{align*}
for each $k\in \mathbb{Z}$.

Furthermore, Lemma \ref{lem:chainmap} allows us to consider the following chain maps:
$$\overline{\alpha} : CW^*(L_0,L_1) \to CW_*(L_1,L_0)^{\vee} \text{ and } \overline{\gamma} : CW_*(L_0,L_1) \to CW^*(L_1,L_0)^{\vee} $$ defined by
\begin{align*}
	\overline{\alpha}(x) (y) &= \alpha(y \otimes x),\\
	\overline{\gamma}(y) (x) &= \gamma(x \otimes y)
\end{align*}
for all $ x \in CW^*(L_0,L_1)$ and $y \in CW_*(L_1,L_0)$.

Let us denote their induced maps on homologies by
\begin{equation*}
\begin{split}
 \overline{\alpha}^k &: HW^k(L_0,L_1) \to HW_{k}(L_1,L_0)^\vee,\\
   \overline{\gamma}^k &: HW_k(L_0,L_1) \to HW^{k}(L_1,L_0)^\vee
\end{split}
\end{equation*}
for each $k\in \Z$.\\

Now let us turn into the proof that both $\overline{\alpha}$ and $\overline{\gamma}$ induce isomorphisms on homologies assuming that the condition \eqref{condition3} of Theorem \ref{thm:maintheorem} is satisfied. For that purpose, first observe that, for each $w\in \mathbb{Z}_{\geq 1}$, there are embeddings
$$ CF^*(L_0, L_1;H_{\tau_w}) \subset CW^*(L_0,L_1) \text{ and } CF^{n-*}(L_1,L_0; H_{\tau_{-w}} ) \subset CW_{*}(L_1,L_0)$$
as subspaces. This allows us to consider the restriction of the product $\widecheck{\mu}^2$ to $CF^{n-*}(L_1,L_0;H_{\tau_{-w}})\otimes CF^*(L_0,L_1;H_{\tau_w})$ given by
$${\mu}^{2,\emptyset,(0,w,-w)}_B : CF^{n-*}(L_1,L_0;H_{\tau_{-w}}) \otimes CF^*(L_0,L_1;H_{\tau_w}) \to CF^{n}(L_0,L_0;H_{\tau_0}).$$

But, for $x_0 \in CF^{n}(L_0,L_0;H_{\tau_0})$, $x_1 \in CF^{*}(L_0,L_1; H_{\tau_w})$ and $x_2 \in CF^{n-*}(L_1,L_0;H_{\tau_{-w}})$, the space $\mathcal{R}^{3,\emptyset,(0,w,-w)}(x_0,x_1,x_2)$ of popsicle maps coincides with the space of pair-of-pants curves with one negative end $x_0$ and two positive ends $x_1$ and $x_2$. Therefore $\mu^{2,\emptyset,(0,-w,w)}_B$ coincides with the pair-of-pants product up to sign. 

Similarly, the product $\widecheck{\mu}^2$ restricts to
$$\mu^{2,\emptyset,(0,-w,w)}_B : CF^{n-1-*}(L_1, L_0;H_{\tau_w}) \otimes CF^{*+1} (L_0,L_1;H_{\tau_{-w}} )\to CF^{n} (L_0,L_0;H_{\tau_0}),$$
which also coincides with the pair-of-pants product up to sign. Let
\begin{equation}\label{eq:tildepairing}
\begin{split}
	\widetilde{\alpha}_w &: CF^{n-*}(L_1,L_0: H_{\tau_{-w}}) \otimes CF^* (L_0,L_1;H_{\tau_{w}}) \to \mathbb{K},\\ \widetilde{\gamma}_w&: CF^{n-1-*}(L_1,L_0;H_{\tau_{w}}) \otimes CF^{*+1}(L_0,L_1;H_{\tau_{-w}}) \to \mathbb{K}
\end{split}
\end{equation}
be the chain maps defined by
\begin{align*}
	\widetilde{\alpha}_w (y\otimes x) &= \langle [L_0,\partial L_0], \mu^{2,\emptyset,(0,w,-w)}_B (y\otimes x) \rangle,\\
	\widetilde{\gamma}_w (x \otimes y)&= \langle [L_0,\partial L_0], \mu^{2,\emptyset,(0,-w,w)}_B (x\otimes y) \rangle.
\end{align*}
Let us denote the induced pairings on homologies by
\begin{align*}
	\widetilde{\alpha}^k_w &: HF^{n-k}(L_1,L_0: H_{\tau_{-w}}) \times HF^k (L_0,L_1;H_{\tau_{w}}) \to \mathbb{K},\\ \widetilde{\gamma}^k_w&: HF^{k}(L_1,L_0;H_{\tau_{w}}) \times HF^{n-k}(L_0,L_1;H_{\tau_{-w}}) \to \mathbb{K}
\end{align*}
for each $k \in \Z$ by abuse of notation.
\begin{rmk}
In general, these pairings $\widetilde{\alpha}_{w+1}^k$ and $\widetilde{\gamma}_{w+1}^{n-k}$ are not non-degenerate since $\tau_{-w}$ may not be close to $-\tau_w$ so that $[\tau_{-w}, -\tau_w] \cap \mathrm{Spec} (\partial L_1,\partial L_0) \ \neq \emptyset$. This is the reason why we choose other slopes $\sigma_{-w}$ in the proof of Lemma \ref{lem:limitnondegenerate}. 
\end{rmk}

The pairings $\widetilde{\alpha}_{w+1}^k$ and $\widetilde{\gamma}_{w+1}^{n-k}$ are compatible with the continuation maps in the sense that
\begin{equation}\label{eq:compatiblitywithcontinuation}
	\begin{split}
	\widetilde{\alpha}_{w+1}^k ( y , c_w(x) ) &= \widetilde{\alpha}_w^k ( c_{-(w+1)} (y) , x),\\
	\widetilde{\gamma}_{w+1}^{n-k} (c_w(x) , y) &= \widetilde{\gamma}_w^{n-k} ( x , c_{-(w+1)}(y) )
	\end{split}
\end{equation}
for all $w\in \Z_{\geq 0}$, $k\in \Z$, $x\in HF^k(L_0,L_1; H_{\tau_w})$ and $y \in HF^{n-k}(L_1,L_0;H_{\tau_{-(w+1)}})$. This implies that these induce pairings on limits as follows.
\begin{equation*}
\begin{split}
	\widetilde{\alpha}^k &: \varprojlim_{w \to \infty}  HF^{n-k}(L_1,L_0;H_{\tau_{-w}}) \times \varinjlim_{w \to \infty} HF^k (L_0,L_1; H_{\tau_w}) \to \mathbb{K},\\  
	\widetilde{\gamma}^{k} &:  \varinjlim_{w \to \infty} HF^{k} (L_1,L_0; H_{\tau_w}) \times  \varprojlim_{w \to \infty}  HF^{n-k}(L_1,L_0;H_{\tau_{-w}})  \to \mathbb{K}.
	\end{split}
\end{equation*}

Finally, these induce maps
\begin{equation*}\label{eq:limitdual}
\begin{split}
	\overline{\widetilde{\alpha}}^k &:  \varinjlim_{w \to \infty} HF^k (L_0,L_1; H_{\tau_w}) \to (\varprojlim_{w \to \infty}  HF^{n-k}(L_1,L_0;H_{\tau_{-w}}))^{\vee},\\  
	\overline{\widetilde{\gamma}}^{k} &:  \varprojlim_{w \to \infty}  HF^{n-k}(L_1,L_0;H_{\tau_{-w}})  \to (\varinjlim_{w \to \infty} HF^{k}(L_1,L_0; H_{\tau_w}))^{\vee}.
	\end{split}
\end{equation*}

\begin{lem}\label{lem:limitnondegenerate}
	\mbox{}
	\begin{enumerate}
	\item	For $k\in \Z$, if $\dim HW^k(L_0,L_1) = \dim  \varinjlim_{w \to \infty} HF^k (L_0,L_1; H_{\tau_w})  < \infty$, then $\overline{\widetilde{\alpha}}^k$ is an isomorphism.
	\item For $k \in \Z$, $\overline{\widetilde{\gamma}}^k$ is an isomorphism.
	\end{enumerate}
\end{lem}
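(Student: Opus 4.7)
My plan is to reduce both statements to finite-level Floer-theoretic Poincar\'{e} duality, then pass to limits via standard vector-space duality. The approach has three steps.

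First, the remark preceding the lemma notes that the pairings $\widetilde{\alpha}^k_w$ and $\widetilde{\gamma}^k_w$ themselves need not be non-degenerate because $[\tau_{-w}, -\tau_w]$ may contain periods in $\mathrm{Spec}(\partial L_1, \partial L_0)$. To fix this, for each $w \geq 1$ I would select an auxiliary slope $\sigma_{-w}$ with $\sigma_{-w} < -\tau_w$, $(\sigma_{-w}, -\tau_w] \cap \mathrm{Spec}(\partial L_1, \partial L_0) = \emptyset$, and $\sigma_{-w} \to -\infty$; this is possible since the spectrum is discrete. Time-reversal of Hamiltonian chords then induces a canonical chain-level identification
\begin{equation*}
CF^{n-*}(L_1, L_0; H_{\sigma_{-w}}) \;\cong\; CF^{*}(L_0, L_1; H_{\tau_w})^{\vee},
\end{equation*}
and a standard TQFT deformation argument---collapsing the pair-of-pants together with the augmentation cap into a single continuation cylinder over an action-empty window---shows that the modified pair-of-pants pairings (with $\sigma_{-w}$ in place of $\tau_{-w}$) realize this identification on cohomology. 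In particular these finite-level pairings are non-degenerate.

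Next, since $\sigma_{-w} \to -\infty$, the family $\{H_{\sigma_{-w}}\}$ is cofinal among admissible Hamiltonians with slopes tending to $-\infty$, and the compatibility \eqref{eq:compatiblitywithcontinuation} ensures that the modified finite-level pairings commute with continuation. Consequently,
\begin{equation*}
\varprojlim_{w} HF^{n-k}(L_1, L_0; H_{\sigma_{-w}}) \;\cong\; \varprojlim_{w} HF^{n-k}(L_1, L_0; H_{\tau_{-w}}),
\end{equation*}
and the induced limit pairings are unchanged by the substitution. Now write $V_w = HF^{k}(L_0, L_1; H_{\tau_w})$ (and analogously with the Lagrangians swapped in part (2)); the previous step identifies the inverse-limit system with $\{V_w^{\vee}\}$ compatibly with continuation. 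The map $\overline{\widetilde{\gamma}}^k$ then becomes the canonical isomorphism $\varprojlim V_w^{\vee} \cong (\varinjlim V_w)^{\vee}$, valid for any direct system of vector spaces and requiring no finite-dimensionality, which proves (2). Meanwhile $\overline{\widetilde{\alpha}}^k$ becomes the natural double-dual map $\varinjlim V_w \to \bigl((\varinjlim V_w)^{\vee}\bigr)^{\vee}$, which is an isomorphism if and only if $\varinjlim V_w = HW^k(L_0, L_1)$ is finite-dimensional---precisely the hypothesis of (1).

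The main obstacle is the geometric content of the first step: identifying the pair-of-pants-plus-augmentation pairing with the chain-level Floer duality coming from time-reversal of Hamiltonian chords. I would handle this by deforming the pair-of-pants-plus-cap through a one-parameter family of Riemann surfaces to a straight cylinder and invoking action-window estimates that force the resulting continuation map to be an isomorphism; careful sign tracking in the conventions of Section~\ref{subsection:wrappedfloercohomology} is tedious but routine.
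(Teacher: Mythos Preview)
Your proposal is correct and follows essentially the same route as the paper: introduce auxiliary slopes $\sigma_{-w}$ close to $-\tau_w$ so that the finite-level pair-of-pants pairings become non-degenerate, identify the two inverse systems by cofinality, and then reduce (2) to $\varprojlim V_w^{\vee}\cong(\varinjlim V_w)^{\vee}$ and (1) to the double-dual map being an isomorphism under the finite-dimensionality hypothesis. The only cosmetic differences are that the paper imposes the slightly sharper bound $\tau_{-w}\le\sigma_{-w}\le -\tau_w+\tau_0$ (the upper bound is needed so the product lands in $CF^n(L_0,L_0;H_{\tau_0})$) and cites \cite[Section~6.1]{cie-oan20} for the finite-level non-degeneracy rather than sketching the TQFT-deformation argument you outline.
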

\begin{proof}	
	Let us prove the first assertion first.
	For each $w\in \Z_{\geq 1}$, we may choose $\sigma_{-w} \in \mathbb{R}$ in such a way that
	\begin{itemize}
	\item $ \tau_{-w} \leq \sigma_{-w}  \leq -\tau_w +\tau_0$,
	\item $[\sigma_{-w}, -\tau_w] \cap \mathrm{Spec} (\partial L_1, \partial L_0) = \emptyset$.
	\end{itemize}
	Then the pairing $a^k_w : HF^{n-k}(L_1,L_0; H_{\sigma_{-w}}) \times HF^k (L_0,L_1;H_{\tau_{w}}) \to \mathbb{K}$ given as above is non-degenerate as proved in \cite[Section 6.1]{cie-oan20}. This implies that there is an isomorphism
	$$\overline{a}^k_w : HF^k (L_0,L_1;H_{\tau_{w}}) \cong HF^{n-k}(L_1,L_0; H_{\sigma_{-w}})^{\vee},$$
	which makes sense since both $HF^* (L_0,L_1;H_{\tau_{w}})$ and $HF^{n-*}(L_1,L_0; H_{\sigma_{-w}})$ are finite-dimensional.
	
	Furthermore, $\sigma_{-w}$ can be chosen in such a way that the sequence $(\sigma_{-w})_{w \in \Z_{\geq 1}}$ is decreasing so that we may identify
	\begin{equation}\label{eq:identification}
	\varprojlim_{w \to \infty}  HF^{n-k}(L_1,L_0;H_{\tau_{-w}}) = \varprojlim_{w \to \infty}  HF^{n-k}(L_1,L_0;H_{\sigma_{-w}}).
	\end{equation}
	
	Then since the isomorphisms $\overline{a}^k_w$ are compatible with continuation maps once again, we have
	\begin{equation*}
	\overline{a}^k : \varinjlim_{w \to \infty} HF^k (L_0,L_1; H_{\tau_w}) \to (\varprojlim_{w \to \infty}  HF^{n-k}(L_1,L_0;H_{\sigma_{-w}}))^{\vee}.
	\end{equation*}
	Here note that the assumption $\dim  \varinjlim_{w \to \infty} HF^k (L_0,L_1; H_{\tau_w}) < \infty$ implies that 
	\begin{equation*}\label{eq:doubledual}
	\begin{split}
	 \varinjlim_{w \to \infty} HF^k (L_0,L_1; H_{\tau_w}) &\cong \left((\varinjlim_{w \to \infty} HF^k (L_0,L_1; H_{\tau_w}))^{\vee}\right)^{\vee} \\
	 &\cong \left(\varprojlim_{w \to \infty} HF^k (L_0,L_1; H_{\tau_w})^{\vee}\right)^{\vee}.
	\end{split}
	\end{equation*}
	Since $HF^{n-k}(L_1,L_0;H_{\sigma_{-w}})$ is identified with $HF^k (L_0,L_1; H_{\tau_w})^{\vee}$ via $(\overline{a}_w^k)^{\vee}$ for each $w$ and the dual of the continuation map $c_{\tau_w,\tau_{w+1}} : HF^k (L_0,L_1;H_{\tau_w})\to HF^k (L_0,L_1;H_{\tau_{w+1}})$ is identified with the continuation map $c_{-\sigma_{w+1},-\sigma_w}$ under the identification $(\overline{a}_w^k)^{\vee}$ due to a compatibility as in \eqref{eq:compatiblitywithcontinuation}, this implies that 
	$$\varinjlim_{w \to \infty} HF^k (L_0,L_1; H_{\tau_w}) \cong \left(\varprojlim_{w \to \infty} HF^{n-k} (L_1,L_0; H_{\sigma_{-w}})\right)^{\vee}.$$
	Finally \eqref{eq:identification} proves the first assertion.
	
	The second assertion can be proved similarly even without any assumption on dimension since the dual of a direct limit is given by the inverse limit of duals in general.
\end{proof}

Now we are ready to prove the following lemma.
\begin{lem}\label{lem:wrappednondegenerate}
	\mbox{}
	\begin{enumerate}
		\item For $k\in \mathbb{Z}$, if $\dim HW^k(L_0,L_1) < \infty$, $\overline{\alpha}^k : HW^k (L_0,L_1) \to HW_k(L_1,L_0)^{\vee}$ is an isomorphism.
		\item For $k\in \mathbb{Z}$, $\overline{\gamma}^k : HW_k (L_0,L_1) \to  (HW^k(L_1,L_0))^\vee$ is an isomorphism.
		\end{enumerate}
\end{lem}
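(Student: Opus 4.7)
The plan is to reduce the statement to Lemma \ref{lem:limitnondegenerate} via the canonical isomorphisms
$$HW^k(L_0,L_1)\cong\varinjlim_{w}HF^k(L_0,L_1;H_{\tau_w}),\qquad HW_k(L_1,L_0)\cong\varprojlim_{w}HF^{n-k}(L_1,L_0;H_{\tau_{-w}})$$
of Lemma \ref{lem:directlimit} and Lemma \ref{lem:inverselimit}. Specifically, I would show that under these identifications $\overline{\alpha}^k$ corresponds to $\overline{\widetilde{\alpha}}^k$ and $\overline{\gamma}^k$ corresponds to $\overline{\widetilde{\gamma}}^k$; both statements of the lemma then follow at once: part (1) from Lemma \ref{lem:limitnondegenerate}(1) under the finite-dimensionality hypothesis on $HW^k(L_0,L_1)$, and part (2) from Lemma \ref{lem:limitnondegenerate}(2), which requires no such hypothesis.

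To carry out this identification, I would analyze $\widecheck{\mu}^2$ on the subcomplexes $CF^{n-k}(L_1,L_0;H_{\tau_{-w}})\subset CW_k(L_1,L_0)$ and $CF^k(L_0,L_1;H_{\tau_w})\subset CW^k(L_0,L_1)$ afforded by the decompositions \eqref{eq:wrappedfloercomplexAB} and its analogue for $CW_*$. For $y$ and $x$ taken from these summands neither input carries a $q$-factor, so the flavor is $F=\emptyset$, and the balancing condition \eqref{eq:weight} forces the output weight to be $w_0=w+(-w)=0$. Consequently $\widecheck{\mu}^2(p^{-1}(y)\otimes i(x))$ collapses to the single pair-of-pants contribution $\mu^{2,\emptyset,(0,w,-w)}_A(y\otimes x)$, which lands in the summand $CF^{*}(L_0,L_0;H_{\tau_0})$ sitting inside the $CW_{n-*}[1]$ factor of $RFC^*(L_0,L_0)$. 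Applying $p$ and then $\pi$ leaves this element essentially unchanged, and pairing with $[L_0,\partial L_0]$ recovers $\widetilde{\alpha}_w(y\otimes x)$ up to sign; an analogous computation gives that $\gamma$ restricted to the corresponding summands equals $\pm\widetilde{\gamma}_w$. The compatibility \eqref{eq:compatiblitywithcontinuation} of $\widetilde{\alpha}_w$ and $\widetilde{\gamma}_w$ with the continuation maps then allows $\alpha$ and $\gamma$ to descend, on homology, to the pairings $\widetilde{\alpha}$ and $\widetilde{\gamma}$ between the direct and inverse limits, so that $\overline{\alpha}^k=\overline{\widetilde{\alpha}}^k$ and $\overline{\gamma}^k=\overline{\widetilde{\gamma}}^k$ under the identifications above.

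The main technical obstacle will be the sign bookkeeping: the degree shift in $RFC^*=CW_{n-*}[1]\oplus CW^*$, the signs in the definitions \eqref{eq:rabinowitzdifferential} and \eqref{eq:product2} of the Rabinowitz $A_\infty$-operations, and the conversion between cohomological and homological degree conventions on wrapped Floer complexes all combine to produce signs that must be tracked carefully in order to establish the on-the-nose equalities $\alpha(y\otimes x)=\pm\widetilde{\alpha}_w(y\otimes x)$ and $\gamma(x\otimes y)=\pm\widetilde{\gamma}_w(x\otimes y)$ on the chosen summands. Once the signs are settled, passage to the limit is automatic and Lemma \ref{lem:limitnondegenerate} yields the result.
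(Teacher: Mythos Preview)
Your overall strategy is exactly the paper's: reduce to Lemma~\ref{lem:limitnondegenerate} by showing that $\overline{\alpha}^k$ and $\overline{\gamma}^k$ are identified with $\overline{\widetilde{\alpha}}^k$ and $\overline{\widetilde{\gamma}}^k$ under the limit descriptions of Lemmas~\ref{lem:directlimit} and~\ref{lem:inverselimit}. However, your key computational step contains a genuine error, not merely a sign issue.

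You claim that for $y\in CF^{n-k}(L_1,L_0;H_{\tau_{-w}})$ (the non-$q^{\vee}$ summand of $CW_k$) and $x\in CF^k(L_0,L_1;H_{\tau_w})$ (the non-$q$ summand of $CW^k$), neither $p^{-1}(y)$ nor $i(x)$ carries a $q$-factor in $RFC^*=A^*\oplus B^{*+1}q$, so that $F=\emptyset$. This is false for $p^{-1}(y)$. Under the identification $CW_{n-*}[1]=A_{\le 0}^*\oplus B_{\le 0}^{*+1}q$ spelled out after \eqref{eq:wrappedfloercomplexAB}, it is the $q^{\vee}$-part of $CW_*$ that becomes $A_{\le 0}$, while the non-$q^{\vee}$ part becomes $B_{\le 0}q$. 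Hence $p^{-1}(y)$ \emph{does} carry a $q$, the flavor is $F=\{2\}$, and the $\widecheck{\mu}^2_A$-output lands at weight $w_0=1$ in $A_{>0}$, which is killed by $p$. If you followed your own reasoning consistently (output in $A_{\le 0}$ at weight $0$), that summand corresponds to the $q^{\vee}$-part of $CW_*$ and is annihilated by $\pi$, so your computation would give $\alpha=0$. The correct mechanism is that $\pi\circ p$ selects the $\widecheck{\mu}^2_B$-output at weight $0$, and $\mu^{2,\{2\},(1,w,-w)}_B=\pm\,\mu^{2,\emptyset,(0,w,-w)}_A$ recovers the pair-of-pants product; this is what the paper records (up to its own notational looseness) as $\widetilde{\alpha}_w$.

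Beyond this, your ``passage to the limit is automatic'' skips the part the paper works hardest on. The summands $CF^k(L_0,L_1;H_{\tau_w})$ and $CF^{n-k}(L_1,L_0;H_{\tau_{-w}})$ are not sub- or quotient complexes of $CW^*$ and $CW_*$, so restricting $\alpha$ to them does not a priori compute anything on homology. The paper introduces the filtrations $E_w\subset CW^*$ and $Q_w=CW_*/C_w$ (and their refinements $E_w^v$, $Q_w^v$), checks that $\alpha$ vanishes on $C_w\otimes E_w$ so descends to $\alpha_w:Q_w\otimes E_w\to\mathbb{K}$, and then assembles the commutative diagrams \eqref{eq:commutativediagram1}--\eqref{eq:commutativediagram2} identifying $\overline{\alpha}$ with the limit of the $\overline{\alpha}_w^{w-1}=\overline{\widetilde{\alpha}}_w$. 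This filtration apparatus is precisely what turns your heuristic into a proof.
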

\begin{proof}
	Let us prove the assertion for $\alpha$ only again since the other one can be proved similarly.
	
	Due to Lemma \ref{lem:limitnondegenerate}, it suffices to show that the map $\overline{\alpha}^k$ is identified with $\overline{\widetilde{\alpha}}^k$ under the identification $HW^k(L_0,L_1) =  \varinjlim_{w \to \infty} HF^k (L_0,L_1; H_{\tau_w})$ and $HW_k(L_1,L_0) = \varprojlim_{w\to \infty} HF^{n-k}(L_1,L_0;H_{\tau_{-w}})$.
	
	For that puprose, for each $w\in \mathbb{Z}_{>0}$, let $E_w \xrightarrow{\iota_w} CW^*(L_0,L_1)$ be a subcomplex defined by
	$$E_w = \bigoplus_{u=1}^{w} CF^*(L_0,L_1;H_{\tau_v}) \oplus \bigoplus_{u=1}^{w-1} CF^{*+1} (L_0,L_1;H_{\tau_v}) q.$$
	Then we have
	$$ CW^*(L_0,L_1) = \varinjlim_{w \to \infty} E_w,$$
	where the direct limit is taken with respect to the natural inclusions 
	$$E_w \to E_{w+1}.$$
	
	Furthermore, in the proof of Lemma \ref{lem:inverselimit}, we have introduced a quotient complex $CW_*(L_1,L_0) \xrightarrow{\pi_w} Q_w$ for a subcomplex $C_w \subset CW_*(L_1,L_0)$ for each $w\in \mathbb{Z}_{\geq 0}$. These quotient complexes admit further quotient maps
	$$Q_{w+1} \to Q_w.$$
	
	For each $w\in \mathbb{Z}_{>0}$, observe that the restriction of $\alpha$ to
	$$ C_w \otimes E_w \subset CW_*(L_1,L_0)\otimes CW^*(L_0,L_1)$$
	is zero since any element of the image $p \circ \widecheck{\mu}^2( C_w \otimes E_w)$ has a zero summand from $CF^*(L_0,L_0;H_{\tau_0})$. 
	Therefore $\alpha$ induces a chain map
	$$\alpha_{w} : Q_w \otimes E_w \to \mathbb{K},$$
	which further induces a chain map
	$$\overline{\alpha}_{w} : E_w \to Q_w^{\vee}.$$

	For each $0\leq v<w$, let $E^v_w \subset E_w$ be a subcomplex given by
	$$ E^v_w = \bigoplus_{u=v+1}^w CF^*(L_0,L_1;H_{\tau_u}) \oplus \bigoplus_{u=v+1}^{w-1} CF^{*+1}(L_0,L_1;H_{\tau_u})q.$$
	
	As argued in the proof of Lemma \ref{lem:inverselimit}, since the quotient complex of the inclusion $E^{v}_w \to E^{v-1}_w$ is acyclic, all the inclusions
	$$ CF^{*} (L_0,L_1;H_{\tau_w}) \cong E^{w-1}_w \hookrightarrow E^{w-2}_w \hookrightarrow \dots \hookrightarrow E^0_w = E_w$$
	are quasi-isomorphisms.
	
	Recall that for any $0\leq v<w$, there is a further quotient complex
	$$ Q_w^v = CW_*(L_0,L_1)/(C_w\oplus D^v)$$
	for some subcomplex $D^v \subset CW_*(L_0,L_1)$. Then once again, one can show that the restriction of $\alpha$ to
	$$ (C_w \oplus D^v) \otimes E^v_w \subset CW_*(L_1,L_0)\otimes CW^*(L_0,L_1) $$
	is zero and hence $\alpha$ induces a chain map
	$$ \alpha^v_w : Q^v_w \otimes E^v_w \to \mathbb{K}.$$
	This further induces a chain map
	$$ \overline{\alpha}^v_w : E^v_w \to (Q^v_w)^{\vee}.$$
	
	In particular, identifying
	$$ E^{w-1}_w = CF^*(L_0,L_1;H_{\tau_w}) \text{ and } Q^{w-1}_w = CF^{n-*}(L_1,L_0; H_{\tau_{-w}}),$$
	the chain map $$\alpha^{w-1}_w : CF^{n-*}(L_1,L_0; H_{\tau_{-w}}) \otimes CF^*(L_0,L_1;H_{\tau_w}) \to \mathbb{K}$$
	is given by
	$$ \alpha^{w-1}_w ( y \otimes x) = \langle [L_0,\partial L_0], \mu^{2,\emptyset,(0,w,-w)}_B (y\otimes x)\rangle,$$
	which is the same as the chain map $\widetilde{\alpha}_w$ discussed in \eqref{eq:tildepairing}.

	Now observe that the following diagram commutes:
	\begin{equation}\label{eq:commutativediagram1}
		\xymatrix{  CF^*(L_0,L_1;H_{\tau_w})=E_w^{w-1} \ar[r]^{\qquad\qquad \simeq} \ar[d]^{\overline{\alpha}^{w-1}_w} &E^{w-2}_w \ar[d]^{\overline{\alpha}^{w-2}_w} \ar[r]^{\quad\simeq} & \cdots \ar[r]^{\simeq\qquad} & E^0_w =E_w \ar[d]^{\overline{\alpha}^{0}_w}\\CF^{n-*}(L_1,L_0; H_{\tau_{-w}})^{\vee}= (Q^{w-1}_w)^{\vee} \ar[r]^{\qquad \qquad \qquad \simeq}& (Q^{w-2}_w)^{\vee} \ar[r]^{\quad \simeq} &\cdots \ar[r]^{\simeq \qquad}& (Q^0_w)^{\vee},}
	\end{equation}
	since all $\alpha_{w}^{v}$ come from the restriction of the pairing $\alpha$. 
	
	Here recall from the proof of Lemma \ref{lem:inverselimit} that $\phi_{w}^{-1} : Q_w = Q^{-1}_w \to  Q^0_w$ is a quasi-isomorphism and it commutes with the quotient maps $Q_{w+1}^{\cdot} \to Q_w^{\cdot}$. As a consequence, we have the following quasi-isomorphism 
	\begin{equation*}
		\varinjlim_{w\to \infty} (Q^0_w)^{\vee} \xrightarrow{\lim_{w}(\phi_w^{-1})^{\vee}}  \varinjlim_{w \to \infty} (Q^{-1}_w)^{\vee} =\varinjlim_{w\to \infty} Q_w^{\vee}. 
	\end{equation*}
	This further fits into the following commutative diagram
	\begin{equation}\label{eq:commutativediagram2}
			\xymatrix@C+1pc{ &\varinjlim_{w\to \infty} E_w \ar[dl]_{\lim_w \overline{\alpha}^0_w} \ar[d]^{\lim_w \overline{\alpha}_w} \ar[r]^{\simeq} & CW^*(L_0,L_1)\ar[d]^{\overline{\alpha}} \\   \varinjlim_{w\to \infty} (Q_w^0)^{\vee} \ar[r]_{\lim_{w}(\phi_w^{-1})^{\vee}} & \varinjlim_{w\to \infty} Q_w^{\vee} \ar[r]& CW_*(L_1,L_0)^{\vee},}
	\end{equation}
	where the bottom-right arrow from $\varinjlim_{w\to \infty} Q_w^{\vee}$ to $CW_*(L_1,L_0)^{\vee}$ is the natural inclusion map $\varinjlim_{w\to \infty} Q_w^{\vee} \to (\varprojlim_{w \to \infty}Q_w)^\vee = CW_*(L_1,L_0)^{\vee}.$ 
	
	Finally, under the assumption that $\dim HW^k(L_0,L_1) <\infty$ for all $k\in \Z$, the commutative diagrams \eqref{eq:commutativediagram1} and \eqref{eq:commutativediagram2} prove that the bottom-right arrow is a quasi-isomorphism and furthermore that $\overline{\alpha}^k$ is identified with $\overline{\widetilde{\alpha}}^k$ as asserted.
\end{proof}

	\subsection{Construction of $\overline{\beta}$}
Now let us define a chain map
$$ \beta = \beta_{L_0,L_1} : RFC^{n-1-*} (L_1,L_0) \otimes RFC^*(L_0,L_1) \to \mathbb{K} $$
by
$$ \beta( y\otimes x) = \langle [L_0,\partial L_0], \pi \circ p \circ \widecheck{\mu}^2( y\otimes x) \rangle$$
for all $x\in  RFC^*(L_0,L_1), y \in RFC^{n-1-*} (L_1,L_0)$. Furthermore, we define a chain map
$$ \overline{\beta} : RFC^*(L_0,L_1) \to RFC^{n-1-*}(L_1,L_0)^{\vee}$$
by 
$$ \overline{\beta}(x)(y) = \beta(y\otimes x)$$
for all $x \in RFC^*(L_0,L_1)$ and $y\in RFC^{n-1-*}(L_1,L_0)$.

Then we have the following lemma.
\begin{lem}\label{lem:commutativity}
	All the squares in the diagram \eqref{eq:morphismbewteenses} commute.
\end{lem}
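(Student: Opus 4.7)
The plan is to verify the left and right squares of the diagram \eqref{eq:morphismbewteenses} separately, though in each case the argument reduces to the same observation: $p\circ\widecheck{\mu}^2$ vanishes on pairs of the form $i(-)\otimes i(-)$. This is a consequence of the strict $A_{\infty}$-homomorphism property of the inclusion $i:\W(M)\to \RW(M)$ established in Theorem \ref{thm:ainfinity}(2)---the strictness, i.e.\ absence of higher components, being exactly the content of Remark \ref{rmk:wrappedfukayacategory}---together with the identity $p\circ i=0$ coming from the short exact sequence \eqref{eq:ses}. The same mechanism was already used implicitly in the proof of Lemma \ref{lem:chainmap}.

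For the left square, I would first unpack the two composites on a test vector. For $x\in CW^*(L_0,L_1)$ and $z\in RFC^{n-1-*}(L_1,L_0)$, a direct computation gives
\[(\overline{\beta}\circ i)(x)(z)=\bigl\langle [L_0,\partial L_0],\,\pi\circ p\circ \widecheck{\mu}^2(z\otimes i(x))\bigr\rangle,\]
\[(p^*\circ\overline{\alpha})(x)(z)=\bigl\langle [L_0,\partial L_0],\,\pi\circ p\circ \widecheck{\mu}^2(p^{-1}(p(z))\otimes i(x))\bigr\rangle.\]
It therefore suffices to show that $\pi\circ p\circ \widecheck{\mu}^2\bigl((z-p^{-1}(p(z)))\otimes i(x)\bigr)=0$. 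But $z-p^{-1}(p(z))$ lies in $\ker p=\mathrm{im}\,i$, hence equals $i(w)$ for some $w\in CW^{n-1-*}(L_1,L_0)$; then $\widecheck{\mu}^2(i(w)\otimes i(x))=i(\mu^2(w\otimes x))$ by the $A_{\infty}$-homomorphism property, and this vanishes after applying $p$.

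The right square will be verified by the symmetric computation. For $x\in RFC^*(L_0,L_1)$ and $y\in CW^{n-1-*}(L_1,L_0)$, the two composites evaluate, respectively, to $\pi\circ p\circ \widecheck{\mu}^2(i(y)\otimes x)$ and $\pi\circ p\circ \widecheck{\mu}^2(i(y)\otimes p^{-1}(p(x)))$ inside the pairing $\langle [L_0,\partial L_0],-\rangle$, and writing $x-p^{-1}(p(x))=i(x')$ for some $x'\in CW^*(L_0,L_1)$ finishes the argument exactly as above. I do not foresee any real obstacle; the only point requiring some care is keeping track of signs when passing to the dual maps, but this is routine bookkeeping given the explicit definitions of $\alpha$, $\beta$, and $\gamma$.
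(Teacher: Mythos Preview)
Your proof is correct and follows essentially the same route as the paper: both arguments decompose an element $z\in RFC^*$ as $i(z')+p^{-1}(p(z))$ and then kill the $i(z')$-contribution using $\widecheck{\mu}^2(i(-)\otimes i(-))=i(\mu^2(-\otimes -))$ together with $p\circ i=0$. The paper only spells out the left square and leaves the right square to the reader, so your explicit treatment of both is, if anything, slightly more complete.
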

\begin{proof}
	Let us prove the assertion for the first square only since that for the other square can be proved similarly.
	The commutativity of the first square means
	$$ p^{*} \circ \overline{\alpha}  =  \overline{\beta} \circ i,$$
	or equivalently
	$$ \alpha ( p(y) \otimes x) = \beta(y \otimes i(x)) $$
	for all $x\in CW^*(L_0,L_1)$ and $y \in RFC^{n-1-*}(L_1,L_0)$. 
	
	Let $x\in CW^*(L_0,L_1)$ and $y \in RFC^{n-1-*}(L_1,L_0)$ be given. Then $y$ can be written as
	$$ y = i(y') + p^{-1}(y'')$$
	for some $y' \in CW^{n-1-*}(L_1,L_0)$ and $y'' = p(y) \in CW_{*}(L_1,L_0)$. Then, as in the proof of Lemma \ref{lem:chainmap}, since the inclusion $i$ induces $A_\infty$-homomorphism, we have
	\begin{align*}
		p\circ \widecheck{\mu}^2( y \otimes i(x)) &= p\circ \widecheck{\mu}^2 ( i(y') + p^{-1}(y''), i(x) ) \\
		&= p \circ \widecheck{\mu}^2 ( p^{-1}(y''), i(x) ). 
	\end{align*}
	
	Consequently, we have
	\begin{align*}
		\beta( y \otimes i(x) ) &= \langle [L_0,\partial L_0], \pi \circ p \circ \widecheck{\mu}^2( y\otimes i(x)) \rangle \\
		&= \langle [L_0,\partial L_0], \pi \circ p \circ \widecheck{\mu}^2( p^{-1}(y'') \otimes i(x)) \rangle\\
		&= \alpha ( y'' \otimes x) = \alpha (p(y) \otimes x).
	\end{align*}
	This proves the assertion.
\end{proof}

Combining Lemma \ref{lem:wrappednondegenerate} and Lemma \ref{lem:commutativity}, we conclude that 
\begin{itemize}
\item $\dim RFH^k(L_0,L_1) < \infty$ for all $k \in \mathbb{Z}$ and
\item  there exists a quasi-isomorphism
\begin{equation}\label{eq:beta}
	\overline{\beta} : RFC^*(L_0,L_1) \to RFC^{n-1-*}(L_1, L_0)^{\vee}
\end{equation}
\end{itemize}
assuming $\dim HW^{k} (L_0, L_1) <\infty$ for all $k \in \mathbb{Z}$.

This implies that $\overline{\beta}$ induces a non-degenerate pairing 
$$ \beta^k : RFH^{n-1-k}(L_1,L_0) \times RFH^k(L_0,L_1) \to \mathbb{K}$$
for every $k \in \Z$.

\subsection{Extension of $\overline{\beta}$ to the triangulated envelope}
For the rest of this subsection, we assume that the condition \eqref{condition3} of Theorem \ref{thm:maintheorem} is satisfied.
Combining Theorem \ref{thm:ainfinity} with the assumption (in particular, that $\{L_i\}_{i \in I}$ generates $\mathcal{W}_b(M)$), we see that the triangulated envelope of $\mathcal{RW}$ (usually constructed as the category of one-sided twisted complexes on $\mathcal{RW}$) coincides with the dg category $\mathrm{Perf}\,\mathcal{RW}$ of perfect $A_\infty$-modules over $\mathcal{RW}$.

Now fix an admissible Lagrangian $L$ and consider $A_\infty$-functors
$$RFC^*(L,-),RFC^{n-1-*}(-,L)^\vee : \mathcal{RW} \to \mathrm{Mod}\,\mathbb{K}.$$
Then one can show that the quasi-isomorphisms \eqref{eq:beta} assemble to a natural quasi-isomorphism
$$RFC^*(L,-) \to RFC^{n-1-*}(-,L)^{\vee}$$
between them.
More concretely, for $k\in \Z_{\geq 1}$, we define
$$\overline{\beta}^k : RFC^*(L_{k-1},L_k) \otimes \cdots \otimes RFC^*(L_0,L_1) \to \mathrm{Hom}_{\mathrm{Mod}\,\mathbb{K}}(RFC^*(L,L_0),RFC^{n-1-*}(L_k,L)^\vee)[-k]$$
by
$$\overline{\beta}^k(z_1,\dots,z_k)(x)(y) = (-1)^{\deg z_1 + \dots + \deg z_k -k} \langle [L,\partial L],\pi \circ p \circ \widecheck{\mu}^{k+2}(y \otimes z_k \otimes \cdots \otimes z_1 \otimes x) \rangle$$
where $z_i \in RFC^*(L_{i-1},L_i),x \in RFC^*(L,L_0)$ and $y \in RFC^{n-1-*}(L_k,L)$.

Using \cite[Section (3m)]{sei08}, we can extend this to a natural quasi-isomorphism
$$\mathrm{Hom}_{\mathrm{Perf}\,\mathcal{RW}}(L,-) \to \mathrm{Hom}_{\mathrm{Perf}\,\mathcal{RW}}(-,L[n-1])^\vee$$
where $\mathrm{Hom}_{\mathrm{Perf}\,\mathcal{RW}}(L,-)$ and $\mathrm{Hom}_{\mathrm{Perf}\,\mathcal{RW}}(-,L[n-1])^\vee$ are regarded as dg functors from $\mathrm{Perf}\,\mathcal{RW}$ to $\mathrm{Perf}\,\mathbb{K}$.

Similarly, fix an object $L'$ of $\mathrm{Perf}\,\mathcal{RW}$ and consider $A_\infty$-functors
$$\mathrm{Hom}_{\mathrm{Perf}\,\mathcal{RW}}(-,L'),\mathrm{Hom}_{\mathrm{Perf}\,\mathcal{RW}}(L',-[n-1])^\vee : \mathcal{RW}^\mathrm{op} \to \mathrm{Mod}\,\mathbb{K}.$$
As above, one can see that the quasi-isomorphisms \eqref{eq:beta} can eventually be extended to a natural quasi-isomorphism
$$\mathrm{Hom}_{\mathrm{Perf}\,\mathcal{RW}}(-,L') \to \mathrm{Hom}_{\mathrm{Perf}\,\mathcal{RW}}(L',-[n-1])^\vee$$
where $\mathrm{Hom}_{\mathrm{Perf}\,\mathcal{RW}}(-,L')$ and $\mathrm{Hom}_{\mathrm{Perf}\,\mathcal{RW}}(L',-[n-1])^\vee$ are regarded as dg functors from $\mathrm{Perf}\,\mathcal{RW}^\mathrm{op}$ to $\mathrm{Perf}\,\mathbb{K}$.

Recall that, for an $A_\infty$-category $\mathcal{A}$, its (idempotent complete) derived category $D^\pi(\mathcal{A})$ is defined by the idempotent completion of the localization of $H^0(\mathrm{Mod}\,\mathcal{A})$ with respect to $A_\infty$-quasi-isomorphisms. Here $\mathrm{Mod}\,\mathcal{A}$ denotes the dg category of $A_\infty$-modules over $\mathcal{A}$. For more details, see \cite[Section 2]{bjk22}.

Consequently, we have
\begin{thm}\label{thm:maintheorem2}
Let $M$ be a $2n$-dimensional Liouville domain.
If the condition \eqref{condition3} of Theorem \ref{thm:maintheorem} is satisfied, $D^\pi\mathcal{RW}(M)$ is a $(n-1)$-Calabi--Yau triangulated category.
\end{thm}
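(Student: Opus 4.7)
The strategy is to package the constructions of the preceding subsections into a clean categorical statement. Assume condition \eqref{condition3}. For any pair $L_0,L_1 \in \{L_i\}_{i \in I}$, Lemma \ref{lem:wrappednondegenerate} gives that both $\overline{\alpha}$ and $\overline{\gamma}$ are quasi-isomorphisms. Combining Lemma \ref{lem:commutativity} with the five-lemma applied to the $\mathbb{K}$-dual of the short exact sequence \eqref{eq:ses} (which remains short exact because we work over a field), the middle vertical map $\overline{\beta}$ of the diagram \eqref{eq:morphismbewteenses} is a quasi-isomorphism as well. This immediately yields both clauses of Definition \ref{dfn:calabiyau} on generating pairs: $RFH^k(L_0,L_1)$ is finite-dimensional (being the dual of a finite-dimensional vector space via \eqref{eq:beta}), and the induced $\beta^k$ is a non-degenerate pairing between $RFH^{n-1-k}(L_1,L_0)$ and $RFH^k(L_0,L_1)$.

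The next step is to upgrade these chain-level quasi-isomorphisms into an $A_\infty$-bifunctorial structure. I would assemble the higher components
\[
\overline{\beta}^k(z_1,\dots,z_k)(x)(y) = (-1)^{\sum \deg z_i - k}\,\bigl\langle [L,\partial L],\,\pi \circ p \circ \widecheck{\mu}^{k+2}(y \otimes z_k \otimes \cdots \otimes z_1 \otimes x)\bigr\rangle
\]
exhibited in the exposition into a natural quasi-isomorphism of $A_\infty$-modules $RFC^*(L,-) \simeq RFC^{n-1-*}(-,L)^{\vee}$ for each fixed $L$. The $A_\infty$-module equations for the family $\{\overline{\beta}^k\}$ reduce to the $A_\infty$-relations \eqref{eq:ainfinity} for $\widecheck{\mu}^{k+2}$, together with the facts that $\pi$, $p$, and the evaluation $\langle [L,\partial L],-\rangle$ are all chain maps (as in Lemma \ref{lem:chainmap}), plus the observation from Theorem \ref{thm:ainfinity}(2) that $i:CW^{*}\hookrightarrow RFC^{*}$ is an $A_\infty$-homomorphism, which kills the spurious contributions coming from the summands not lying in the image of $p^{-1}$.

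Finally, I would extend this to the triangulated envelope. Since the objects of $\mathcal{RW}(M)$ are by construction the admissible Lagrangians in $\{L_i\}_{i\in I}$, these tautologically split-generate $D^{\pi}\mathcal{RW}(M)$, so the triangulated envelope coincides with $\mathrm{Perf}\,\mathcal{RW}(M)$. The Yoneda-type extension argument of \cite[Section (3m)]{sei08}, applied first in the second variable and then in the first, promotes the family of natural quasi-isomorphisms of bimodules to a natural quasi-isomorphism of dg bifunctors
\[
\mathrm{Hom}_{\mathrm{Perf}\,\mathcal{RW}(M)}(-,-) \Longrightarrow \mathrm{Hom}_{\mathrm{Perf}\,\mathcal{RW}(M)}(-,-[n-1])^{\vee}
\]
on $\mathrm{Perf}\,\mathcal{RW}(M)^{\mathrm{op}} \times \mathrm{Perf}\,\mathcal{RW}(M)$. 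The full subcategory on which this natural transformation is a quasi-isomorphism is triangulated and closed under summands; since it contains the generators $\{L_i\}$ it must be all of $\mathrm{Perf}\,\mathcal{RW}(M)$. Passing to $H^0$ then gives the bifunctorial non-degenerate pairing required by Definition \ref{dfn:calabiyau}, proving that $D^{\pi}\mathcal{RW}(M)$ is $(n-1)$-Calabi--Yau.

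The main obstacle I expect is bookkeeping rather than conceptual: verifying that the higher components $\overline{\beta}^k$ satisfy the $A_\infty$-bimodule relations with the correct signs, and checking that the two-variable Yoneda extension descends correctly to the idempotent completion. All the geometric content is absorbed into Lemma \ref{lem:wrappednondegenerate} (via Lemma \ref{lem:limitnondegenerate}, which itself relies on the chain-level duality of \cite{cie-oan20}), and into the $A_\infty$-equations of Theorem \ref{thm:ainfinity}; everything else is standard $A_\infty$-formalism.
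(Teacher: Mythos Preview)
Your proposal is correct and follows essentially the same route as the paper: first deduce that $\overline{\beta}$ is a quasi-isomorphism on generators from Lemmas \ref{lem:wrappednondegenerate} and \ref{lem:commutativity} via the five-lemma, then assemble the higher components $\overline{\beta}^k$ (with the same formula) into a natural transformation of $A_\infty$-functors, and finally extend to $\mathrm{Perf}\,\mathcal{RW}$ one variable at a time using \cite[Section (3m)]{sei08}. Your added remark that the locus where the transformation is a quasi-isomorphism is a thick triangulated subcategory containing the generators is a clean way to phrase the last step, and your honest acknowledgment that the sign and bimodule-relation bookkeeping is the only remaining work matches the paper's own level of detail.
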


This completes the proof of Theorem \ref{thm:maintheorem}.

\section{Examples}\label{section:examples}
In this section, we provide examples of Liouville domains for which condition \eqref{condition3} of Theorem \ref{thm:maintheorem} is satisfied. As a result, for such a Liouville domain, its derived Rabinowitz Fukaya category is Calabi--Yau.
\subsection{Cotangent bundles of manifolds with a finite fundamental group}
Let $N$ be a connected closed smooth manifold whose fundamental group is finite. For any field $\mathbb{K}$ of characteristic 0, the Serre spectral sequence  \cite{hat02} for the fibration $\Omega_{x}(N) \hookrightarrow \mathcal{P} (N) \to N$ is a spectral sequence $\{E^{r+1}_{p,q},d^r\}$ converging to $H_*(\mathcal{P}(N);\mathbb{K})$ such that
$$E^2_{p,q} \cong H_p(N; H_q(\Omega_{x} (N);\mathbb{K})).$$
Here $\mathcal{P}(N) =\mathrm{Map} \left(([0,1],0),(N,x)\right)$ is the space of based paths in $(N,x)$ and the map $\mathcal{P}(N) \to N$ is the evaluation map at $1\in [0,1]$. Consequently, the fiber of the evaluation map over a point in $N$ is homotopy equivalent to the space $\Omega_{x} (N)$ of loops in $N$ based at a point $x \in N$.

Considering that the path space $\mathcal{P}(N)$ is a finite union of contractible components, the above spectral sequence actually shows that the homology $H_k (\Omega_{x} (N);\mathbb{K})$ is finite-dimensional for all $x\in N$ and $k \in \Z_{\geq 0}$. This can be deduced by a result of Serre \cite{ser51}.

 Indeed, this can be proved by an induction on $k$. First, for $k=0$, the assumption that $\pi_1(N)$ is finite implies that $\Omega_{x} (N)$ is a finite union of path-connected components and hence $\dim H_0(\Omega_{x} (N);\mathbb{K})$ is finite. Suppose that $\dim H_j (\Omega_{x}(N);\mathbb{K}) <\infty$ for all $0 \leq j <k$ for some $k\in \Z_{\geq 1}$. Then, considering that $\dim E^{\infty}_{0,k} = 0$ and that the differential $d^r$ changes the bidegree $(p,q)$ into $(p-r,q+r-1)$ on the $r$-th page of the Serre spectral sequence, we have
$$\dim H_k (\Omega_{x}(N);\mathbb{K}) \leq \sum_{1<j \leq \dim N} \dim H_{k+1-j} (\Omega_{x}(N);\mathbb{K}) \cdot \dim H_{j}(N;\mathbb{K}).$$
Here we used the fact that $\dim H_1(N;\mathbb{K}) = 0$. The assertion follows from this inequality.

Now recall the following two theorems.
\begin{thm}[\cite{abo11}]
A cotangent fiber $T^*_{x} N$ generates the wrapped Fukaya category $\W_b(T^*N)$, where $b$ is the pullback of the second Stiefel--Whitney class of $N$ via the projection $T^* N\to N$.
\end{thm}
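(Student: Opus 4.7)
The plan is to invoke Abouzaid's split-generation criterion (from \emph{A geometric criterion for generating the Fukaya category}): an admissible Lagrangian $L$ split-generates $\mathcal{W}_b(M)$ provided the open-closed map $\mathcal{OC}\colon HH_*(CW^*_b(L,L)) \to SH^{*+n}_b(M)$ hits the unit of symplectic cohomology. So the first step is to verify that the criterion applies to $L = T^*_x N$ equipped with the chosen grading and the background structure induced by $b = \pi^* w_2(N)$. Since the fiber is contractible, grading and a relative Pin-structure exist, so $T^*_x N$ qualifies as an object of $\W_b(T^*N)$.

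Next I would identify both sides of the open-closed map in terms of loop-space topology. By the Abbondandolo--Schwarz/Viterbo theorem (as sharpened by Abouzaid in \emph{On the wrapped Fukaya category and based loops}), one has a quasi-isomorphism
\[
CW^*_b(T^*_x N, T^*_x N) \;\simeq\; C_{-*}(\Omega_x N;\K),
\]
with the $A_\infty$-structure corresponding to the Pontryagin product on based loops (and the coefficient system twisted appropriately when $N$ is not spin — this is precisely where $b$ enters). On the closed-string side, symplectic cohomology of $T^*N$ is isomorphic to the homology of the free loop space $\mathcal{L}N$ with twisted coefficients. Under these identifications, the open-closed map becomes the classical comparison
\[
HH_*\bigl(C_{-*}(\Omega_x N)\bigr) \longrightarrow H_{-*}(\mathcal{L}N),
\]
which is Goodwillie's isomorphism (possibly after completion) when $N$ is simply connected, and more generally a split-surjection onto the component of contractible loops; in either case its image contains the class of the constant loop at $x$, which represents the unit of $SH^*_b$.

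The main computational step, and the principal obstacle, is to rigorously match the Floer-theoretic open-closed map with this topological map at the chain level. This requires a careful analysis of the moduli spaces of half-discs with one interior output and boundary on $T^*_x N$: one must construct a cascade/PSS-type quasi-isomorphism intertwining $\mathcal{OC}$ with the map induced by $\Omega_x N \hookrightarrow \mathcal{L}N$ followed by the Jones--Goodwillie identification on Hochschild chains. Once the unit is shown to lie in the image of $\mathcal{OC}$, Abouzaid's criterion yields that $T^*_x N$ split-generates $\W_b(T^*N)$; since split-generation in the perfect derived completion is all that is needed for the applications in this paper, the theorem follows.
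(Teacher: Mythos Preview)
The paper does not give its own proof of this statement: it is quoted as a result from the literature, with the citation \cite{abo11} (Abouzaid, \emph{A cotangent fibre generates the Fukaya category}) serving as the entire justification. So there is no ``paper's proof'' to compare against.

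Your sketch is a faithful outline of the strategy actually used in \cite{abo11}: Abouzaid's split-generation criterion reduces the problem to showing the open-closed map from $HH_*(CW^*_b(T^*_xN,T^*_xN))$ to $SH^{*+n}_b(T^*N)$ hits the unit, and this is verified by identifying both sides with loop-space invariants and the map itself with (a version of) the Goodwillie map. One small caution: the Goodwillie isomorphism in its cleanest form requires simply-connectedness or a completion, but for the generation criterion one only needs the unit (the class of the constant loop) in the image, which is easier and holds without any hypothesis on $\pi_1(N)$. Your sketch acknowledges this, so the outline is sound. For the present paper, however, all of this is packaged into the citation and no argument is expected.
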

\begin{thm}[\cite{abb-sch10,abo12}]
There is an $A_{\infty}$-isomorphism
$$ HW^*(T^*_{x} N,T^*_{x} N) \cong H_{-*} (\Omega_{x} (N);\mathbb{K}).$$
\end{thm}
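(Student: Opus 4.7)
The plan is to construct the $A_\infty$-isomorphism explicitly via the Abbondandolo--Schwarz correspondence, as refined to an $A_\infty$-morphism by Abouzaid. First, one chooses a fiberwise convex Hamiltonian $H : T^*N \to \R$ that grows quadratically in the fiber direction (the equivalence with the linear setup of Section~\ref{section:rw} follows from a continuation argument). Via the Legendre transform, the action functional $\mathcal{A}_H$ on the space of paths in $T^*N$ with endpoints on $T^*_{x}N$ is identified with the classical Lagrangian action on $\Omega_{x}(N)$; in particular, critical points of $\mathcal{A}_H$ correspond bijectively to geodesic loops based at $x$, and the respective gradient flows match after the Legendre identification.

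Next, I would define a chain-level map
\[
\Phi : CW^*(T^*_{x}N, T^*_{x}N) \to C_{-*}^{\mathrm{cub}}(\Omega_{x}(N); \K)
\]
by counting moduli spaces of Floer half-strips. Explicitly, for a Hamiltonian chord $y$ and a cubical simplex $\sigma : [0,1]^k \to \Omega_{x}(N)$ one counts maps from a punctured half-strip into $T^*N$ that are asymptotic to $y$ at the puncture and whose boundary projects to $N$ along $\sigma$. Transversality together with Gromov-type compactness (with the $C^0$-bounds coming from the convexity of $H$) shows that $\Phi$ is a well-defined chain map of the correct degree.

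To show $\Phi$ is a quasi-isomorphism, I would filter both sides by the symplectic action and the loop-space energy, respectively. Under the Legendre identification these filtrations agree, and at each non-degenerate critical level the map $\Phi$ is locally modelled by the Morse-theoretic identification of unstable manifolds. Passing to the associated graded yields an isomorphism on each stratum, and a spectral sequence comparison argument then gives that $\Phi$ itself is a quasi-isomorphism.

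Finally, I would promote $\Phi$ to an $A_\infty$-morphism by constructing higher components $\Phi^k$ for $k \geq 2$ from moduli spaces of Floer disks with $k$ boundary marked points labelled by cubical simplices of $\Omega_{x}(N)$. Matching the codimension-one boundary strata of these moduli spaces with the Pontryagin (concatenation) product on cubical chains produces the $A_\infty$-functor equations. The main obstacle is the coherent transversality and orientation bookkeeping needed for $(\Phi^k)_{k \geq 1}$ to assemble into a genuine $A_\infty$-morphism; this is the technical heart of Abouzaid's construction in \cite{abo12}, and combining it with the Abbondandolo--Schwarz quasi-isomorphism on the linear component yields the desired $A_\infty$-isomorphism.
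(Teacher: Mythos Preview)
The paper does not supply a proof of this statement: it is quoted as a known theorem from \cite{abb-sch10,abo12} and used as a black box in Section~\ref{section:examples}. There is therefore no ``paper's own proof'' to compare against.

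That said, your sketch is a faithful high-level summary of the arguments in those references: the Abbondandolo--Schwarz chain map built from half-strips, the action/energy filtration argument for the quasi-isomorphism, and Abouzaid's upgrade to an $A_\infty$-morphism via moduli of disks with boundary constraints in the based loop space. One small point: as stated, the theorem asserts an isomorphism of $A_\infty$-algebras on \emph{homology}, so strictly speaking what is required is a chain-level $A_\infty$-quasi-isomorphism inducing a ring isomorphism between $HW^*$ with the triangle product and $H_{-*}(\Omega_x N)$ with the Pontryagin product; your description of $\Phi$ and the $\Phi^k$ delivers exactly that. The direction of the map and the precise model for chains on $\Omega_x N$ vary among the sources (Abouzaid in \cite{abo12} builds the $A_\infty$-functor from the loop-space side into the wrapped side using moduli of half-disks), but this does not affect the conclusion.
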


These two theorems and the observation that $H_{k} (\Omega_{x} (N);\mathbb{K})$ is finite-dimensional for every $k\in \mathbb{Z}_{\geq 0}$ say that $\W_b(T^*N)$ is generated by $T^*_{x} N$ and its endomorphism algebra $ HW^{-k}(T^*_{x} N,T^*_{x} N)$ is finite-dimensional for every $k \in \mathbb{Z}$. Thus, the wrapped Fukaya category $\W_b(T^*N)$ satisfies the condition \eqref{condition3} of Theorem \ref{thm:maintheorem} for the generator $T^*_{x} N$.

\subsection{Plumbings of $T^*S^{n}$ along trees}
Let $Q=(Q_0,Q_1)$ be a finite quiver whose underlying graph is a tree. For an integer $n \geq 3$, let $X^{n}_Q$ be the plumbing of cotangent bundles $T^* S^{n}$ along the underlying graph of $Q$. Indeed, for each $v\in Q_0$, we associate a copy of the cotangent bundle $T^* S^{n}$, which we denote by $T^* S_v$. Then we take the plumbing between $T^*S_v$ and $T^* S_w$ whenever $v$ and $w$ are connected by an arrow of $Q$. The resulting space is $X^{n}_Q$. See \cite{etg-lek17} for a construction of such a space.

For each vertex $v \in Q_0$, let $L_v$ be a cotangent fiber of at a base point on $S_v \subset T^* S_v$. Then it was shown  in \cite{etg-lek17,lek-ued20,asp21} that the $A_{\infty}$-algebra $\oplus_{v,w \in Q_0} CW^*(L_v,L_w)$ is quasi-isomorphic to the $n$-Calabi--Yau Ginzburg dg algebra $\Gamma_Q$ \cite{gin06} associated with $Q$ once the cotangent fibers $L_v$ are graded in a certain way.

To describe the Ginzburg dg algebra $\Gamma_Q$, first consider a quiver $\widehat{Q}$, which we will call {\em Ginzburg double} of $Q$, obtained from the quiver $Q$ by adding a reverse arrow $\alpha^*$ for each $\alpha \in Q_1$ and a loop $t_i$ for each $i\in Q_0$. Then, for each $\alpha \in Q_1$, grade $\alpha$ by $\deg \alpha =0$ and its reverse $\alpha^*$ by $\deg \alpha^* = 2-n$ and for each vertex $i\in Q_0$, grade $t_i$ by $\deg t_i = 1-n$. Then, $\Gamma_Q$ is defined by the path algebra $\mathbb{K}\widehat{Q}$ as a graded associative algebra and equipped with a dg algebra structure given by
\begin{itemize}
	\item $d\alpha = 0 = d\alpha^*$ for $\alpha \in Q_0$ and
	\item $dt_i = \sum_{s(\alpha) =i} \alpha^*\alpha - \sum_{t(\beta) = i} \beta \beta^*$. 
\end{itemize}

Since Ginzburg algebra $\Gamma_Q$ satisfies the conditions of \cite[Proposition 2.5]{kal-yan16} when $n \geq 3$, the degree $k$ part $\Gamma^k_Q$ of $\Gamma_Q$ is finite-dimensional for every $k\in \Z$. Since the results of \cite{etg-lek17,cdrgg17,gps20} say that the cotangent fibers $L_v$ generate the wrapped Fukaya category $\W(X^{n}_Q)$, this observation implies that $\W(X^{n}_Q)$ satisfies the condition \eqref{condition3} of Theorem \ref{thm:maintheorem} for the set of Lagrangian cocores $\{L_v\}_{v\in Q_0}$.\\

This case is particularly interesting since it is known that the Rabinowitz Fukaya category $\mathcal{RW}(X^{n}_Q)$ is quasi-equivalent to the quotient category $\W(X^{n}_Q)/\mathcal{F}(X^{n}_Q)$ due to \cite[Proposition 6.4]{ggv22}. On the other hand, we proved that the derived category of the quotient $D^{\pi}(\W(X^{n}_Q)/\mathcal{F}(X^{n}_Q))$ carries a $(n-1)$-Calabi--Yau cluster structure in \cite{bjk22}.

Now we show that the $(n-1)$-Calabi--Yau pairing $\beta_{D^{\pi}(\W/\F)}$ on the derived quotient category $D^{\pi} (\W/\F) \coloneqq D^{\pi} (\W(X^{n}_Q)/\mathcal{F}(X^{n}_Q))$ is equivalent to the pairing $\beta_{D^{\pi}\RW}$ constructed in Section \ref{section:cy} on the derived Rabinowitz Fukaya category $D^{\pi} \RW \coloneqq D^{\pi} \mathcal{RW}(X^{n}_Q)$ in the sense that there exists a nonzero constant $c\in\mathbb{K}$ such that
$$ \beta_{D^\pi(\W/\F)} : \Hom_{D^{\pi}(\W/\F)} ( Y,X[n-1]) \times \Hom_{D^{\pi}(\W/\F)}(X,Y) \to \mathbb{K}$$
is identified with
$$ c\cdot \beta_{D^{\pi}\RW} : \Hom_{D^{\pi}\RW} (\Phi(Y), \Phi(X)[n-1]) \times  \Hom_{D^\pi\RW} (  \Phi(X), \Phi(Y)) \to \mathbb{K}$$
for all objects $X,Y$ of $D^{\pi}(\W/\F)$ under the equivalence $\Phi : D^{\pi}(\W/\F) \to D^{\pi}\RW$ that was shown to exist by \cite{ggv22}. This assertion will be verified by Lemma \ref{lem:equivalence} below.

For that purpose, we first recall some properties of a Calabi--Yau pairing on a triangulated category. Let $\mathcal{C}$ be a triangulated category over a field $\mathbb{K}$ with a $(n-1)$-Calabi--Yau pairing $\beta$.
\begin{dfn}\label{dfn:pairingtriangle}
For any two objects $X,Y$ of $\mathcal{C}$, let $\overline{\beta}_{X,Y} : \Hom_{\mathcal{C}} (X,Y) \to \Hom_{\mathcal{C}}(Y,X[n-1])^{\vee}$ be defined by
$$ \overline{\beta}_{X,Y} (f) (g) = \beta_{X,Y} (g,f)$$
for $f \in \Hom_{\mathcal{C}}(X,Y)$ and $g\in \Hom_{\mathcal{C}}(Y,X[n-1])$.
\end{dfn}

The following lemma follows from the bifunctoriality of Calabi--Yau pairing.
\begin{lem}\label{lem:bifunctoriality}
For any three objects $X$, $Y$, $Z$ of $\mathcal{C}$, $f\in \Hom_{\mathcal{C}} (X,Y)$ and $g\in \Hom_{\mathcal{C}}(Y,X[n-1])$, we have
\begin{enumerate}
\item$\beta_{X,Y} (g,f) = \beta_{X,X[n-1]}(\mathrm{Id}_{X[n-1]}, g\circ f)$.
\item $\beta_{X,Y} (g, f) = \beta_{Y,X[n-1]} ( f[n-1], g).$
\item There exists a commutative diagram between two long exact sequences as follows.
\begin{equation*}\label{eq:commutativediagram}
\begin{tikzcd}
	\cdots \ar[r] &\Hom_{\mathcal{C}} (Z,Y) \ar[r] \ar[d, "\overline{\beta}_{Z,Y}"] & \Hom_{\mathcal{C}} (Z ,\mathrm{Cone}(X \xrightarrow{f} Y)) \ar[r]   \ar[d,"\overline{\beta}_{Z,\mathrm{Cone}(X \xrightarrow{f} Y)} "] & \Hom_{\mathcal{C}} (Z, X[1]) \ar[r]  \ar[d, "\overline{\beta}_{Z,X[1]}"]& \cdots \\
	\cdots \ar[r] &\Hom_{\mathcal{C}} (Y,Z[n-1])^\vee \ar[r] & \Hom_{\mathcal{C}} (\mathrm{Cone}(X \xrightarrow{f} Y),Z[n-1])^{\vee} \ar[r] & \Hom_{\mathcal{C}} (X[1],Z[n-1])^{\vee} \ar[r]& \cdots
	\end{tikzcd}
\end{equation*}

\end{enumerate}
\end{lem}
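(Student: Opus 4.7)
The plan is to deduce all three claims from a single naturality identity: since $\beta$ is a natural transformation between the bifunctors $F(X,Y) = \Hom_{\mathcal{C}}(X,Y)$ and $G(X,Y) = \Hom_{\mathcal{C}}(Y, X[n-1])^\vee$ on $\mathcal{C}^{op} \times \mathcal{C}$, unwinding the definition yields, for any $h : X' \to X$, $k : Y \to Y'$, $f \in \Hom_{\mathcal{C}}(X,Y)$ and $g' \in \Hom_{\mathcal{C}}(Y', X'[n-1])$, the equation
$$\beta_{X,Y}\bigl(h[n-1] \circ g' \circ k,\, f\bigr) \;=\; \beta_{X', Y'}\bigl(g',\, k \circ f \circ h\bigr).$$
This will be essentially my only tool. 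For (1), I specialize to $X' = X$, $Y' = X[n-1]$, $h = \mathrm{Id}_X$, $k = g$, $g' = \mathrm{Id}_{X[n-1]}$; the left-hand side becomes $\beta_{X,Y}(g,f)$ and the right-hand side becomes $\beta_{X, X[n-1]}(\mathrm{Id}_{X[n-1]}, g \circ f)$, which is the claim.

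For (2), applying (1) to both sides reduces the equality $\beta_{X,Y}(g,f) = \beta_{Y, X[n-1]}(f[n-1], g)$ to the cyclic identity
$$\beta_{X, X[n-1]}\bigl(\mathrm{Id}_{X[n-1]},\, g \circ f\bigr) \;=\; \beta_{Y, Y[n-1]}\bigl(\mathrm{Id}_{Y[n-1]},\, f[n-1] \circ g\bigr),$$
that is, a cyclic symmetry of the trace $\mathrm{tr}_Z(\varphi) := \beta_{Z, Z[n-1]}(\mathrm{Id}_{Z[n-1]}, \varphi)$ under the swap of $f$ and $g$. This step is the main conceptual obstacle, since naturality in the bare sense of Definition \ref{dfn:calabiyau} does not by itself deliver it. In our geometric setting, however, $\beta$ is built in Section \ref{section:cy} from the pair-of-pants product composed with the integration $\langle [L_0,\partial L_0], -\rangle$, and the required cyclic invariance is precisely the well-known chain-level symmetry of the three-punctured sphere together with the trace property of integration against the relative fundamental class; the interchange $f \leftrightarrow g$ at the level of Hamiltonian chords corresponds to a cyclic relabeling of the boundary punctures. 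I will therefore establish (2) by invoking this cyclic symmetry of the chain-level pair-of-pants product and then passing to cohomology.

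For (3), both long exact sequences arise by applying dual cohomological functors, namely $\Hom_{\mathcal{C}}(Z, -)$ and $\Hom_{\mathcal{C}}(-, Z[n-1])^\vee$, to the distinguished triangle $X \xrightarrow{f} Y \to \mathrm{Cone}(f) \to X[1]$, and the vertical arrows are the components of the natural transformation $\overline{\beta}_{Z,-}$ determined by $\beta$. Commutativity of each square is then a direct specialization of the displayed naturality identity to $h = \mathrm{Id}_Z$ and $k$ equal to the relevant connecting morphism of the triangle (for the square involving the shift $X[1]$, one inserts a suspension). I will verify one representative square in detail and note that the remaining two squares, and indeed the full diagram obtained by rotating the triangle, are treated identically.
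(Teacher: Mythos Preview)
Your treatment of (1) and (3) is correct and matches the paper's intent: both follow directly from the naturality of $\beta$ in each variable, exactly as you unwind it. The paper gives no proof beyond the sentence ``follows from the bifunctoriality of Calabi--Yau pairing,'' so on these points you are simply filling in what is left implicit.

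For (2) you have correctly put your finger on a genuine subtlety: the cyclic identity $\beta_{X,Y}(g,f)=\beta_{Y,X[n-1]}(f[n-1],g)$ does \emph{not} follow from bifunctoriality in the bare sense of Definition~\ref{dfn:calabiyau}. However, your proposed fix---invoking the chain-level cyclic symmetry of the pair-of-pants product from Section~\ref{section:cy}---does not match the generality of the statement. Lemma~\ref{lem:bifunctoriality} is formulated for an arbitrary $(n-1)$-Calabi--Yau triangulated category $\mathcal{C}$, and its sole application in the paper is Lemma~\ref{lem:equivalence}, where property (2) is used for \emph{two} pairings $\alpha$ and $\beta$ simultaneously (see the chain of equalities \eqref{eq2}). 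Only one of these is the Rabinowitz pairing constructed geometrically in Section~\ref{section:cy}; the other comes from the Calabi--Yau triple structure of \cite{bjk22}. Your pair-of-pants argument establishes (2) for the former but says nothing about the latter, so the proof of Lemma~\ref{lem:equivalence} would remain incomplete.

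The honest resolution is either to strengthen Definition~\ref{dfn:calabiyau} to include the symmetry (2) as part of the data---this is standard in the literature on Serre functors and Calabi--Yau structures, and both pairings at hand do satisfy it for independent reasons---or to verify (2) separately for each pairing appearing in the application. The paper's one-line justification elides this point; your instinct that something extra is needed is correct, but the extra input should be supplied at the level of the definition or for each pairing individually, not by specializing the abstract lemma to one geometric instance.
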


For the following lemma, for objects $X$ and $Y$ of $\mathcal{C}$, we define
$$\Hom^*_{\mathcal{C}} (X,Y) = \bigoplus_{d \in \Z} \Hom_{\mathcal{C}} ( X, Y[d])[-d].$$
\begin{lem}\label{lem:equivalence}
Let $\mathcal{C}$ be a triangulated category over a field $\mathbb{K}$.
Suppose that $\mathcal{C}$ admits two $(n-1)$-Calabi--Yau pairings $\alpha$ and $\beta$. Suppose further that $\mathcal{C}$ has a collection $\{X_i\}_{i \in I}$ of objects generating $\mathcal{C}$ for some set $I$, such that
\begin{itemize}
\item $\dim \Hom_{\mathcal{C}}(X_i,X_i[n-1])=1$ or equivalently $\dim \Hom_{\mathcal{C}}(X_i,X_i)=1$ for each $i\in I$ and 
\item for any $i,j \in I$, there exists a finite chain $\{{i_k} \in I\}_{k=0}^m$ such that
${i_0} =i$, $i_m = j$ and
$$ \Hom^*_{\mathcal{C}}(X_{i_k},X_{i_{k+1}}) \neq 0$$ for all $0 \leq k \leq m-1$.
\end{itemize}
Then there exists a nonzero constant $c \in \mathbb{K}$ such that
$$ \beta = c \cdot \alpha.$$
\end{lem}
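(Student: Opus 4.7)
The plan is to exploit the 1-dimensionality of $\Hom_{\mathcal{C}}(X_i,X_i)$ together with Lemma~\ref{lem:bifunctoriality} to reduce every value of $\beta$ to a scalar multiple of the corresponding value of $\alpha$, with the scalar independent of the objects. First I would introduce local ratios: for any $X\in\mathcal{C}$ with $\dim\Hom_{\mathcal{C}}(X,X)=1$, non-degeneracy of $\alpha$ forces $\dim\Hom_{\mathcal{C}}(X,X[n-1])=1$, so both $\alpha_{X,X}$ and $\beta_{X,X}$ are nonzero pairings on a tensor product of two 1-dimensional spaces and hence differ by a unique constant $c_X\in\mathbb{K}^{\times}$ with $\beta_{X,X}=c_X\,\alpha_{X,X}$. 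Bifunctoriality of the two pairings with respect to the shift autoequivalence of $\mathcal{C}$ implies $c_{X[d]}=c_X$ for every $d\in\mathbb{Z}$.

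The next step is to propagate $c_X$ to off-diagonal pairings. For any $Y\in\mathcal{C}$, $f\in\Hom(X,Y)$ and $g\in\Hom(Y,X[n-1])$, Lemma~\ref{lem:bifunctoriality}(1) combined with the definition of $c_X$ yields
\[\beta_{X,Y}(g,f)=\beta_{X,X[n-1]}(\mathrm{Id}_{X[n-1]},g\circ f)=c_X\,\alpha_{X,X[n-1]}(\mathrm{Id}_{X[n-1]},g\circ f)=c_X\,\alpha_{X,Y}(g,f),\]
because $g\circ f$ belongs to the 1-dimensional space $\Hom(X,X[n-1])$, so the functionals $\beta_{X,X[n-1]}(\mathrm{Id},-)$ and $\alpha_{X,X[n-1]}(\mathrm{Id},-)$ on that space have ratio $c_X$. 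Hence $\beta_{X,Y}=c_X\,\alpha_{X,Y}$ whenever $X$ has 1-dimensional endomorphism algebra. Applying Lemma~\ref{lem:bifunctoriality}(2) to swap the arguments and then (1) to reduce to the pair $(Y,Y[n-1])$ gives the symmetric statement $\beta_{X,Y}=c_Y\,\alpha_{X,Y}$ whenever $Y$ has 1-dimensional endomorphism algebra. Taking $X=X_i$, $Y=X_j[d]$ and using non-degeneracy of $\alpha$ to produce a pair $(f,g)$ with $\alpha_{X_i,X_j[d]}(g,f)\neq 0$ as soon as $\Hom(X_i,X_j[d])\neq 0$, I deduce $c_{X_i}=c_{X_j[d]}=c_{X_j}$, where the last equality uses shift-invariance. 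Since $\Hom^{*}_{\mathcal{C}}(X_i,X_j)\neq 0$ means exactly that $\Hom(X_i,X_j[d])\neq 0$ for some $d$, the chain-connectedness hypothesis then forces all $c_{X_i}$ to coincide with a common nonzero constant $c\in\mathbb{K}^{\times}$; in particular $\beta_{X_i,W}=c\,\alpha_{X_i,W}$ for every $i\in I$ and every $W\in\mathcal{C}$.

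To finish, I extend to an arbitrary first argument. Applying Lemma~\ref{lem:bifunctoriality}(2) to the previous equality gives $\beta_{Z,X_i[d]}=c\,\alpha_{Z,X_i[d]}$ for every $Z\in\mathcal{C}$, $i\in I$ and $d\in\mathbb{Z}$. Fixing $Z$, let $\mathcal{D}_Z\subseteq\mathcal{C}$ be the full subcategory of objects $W$ satisfying $\overline{\beta}_{Z,W}=c\,\overline{\alpha}_{Z,W}$. Lemma~\ref{lem:bifunctoriality}(3) provides a commutative diagram of long exact sequences whose vertical arrows are $\overline{\beta}_{Z,-}$ and $c\,\overline{\alpha}_{Z,-}$; the five-lemma then shows that $\mathcal{D}_Z$ is closed under mapping cones, shifts and direct summands. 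Since $\mathcal{D}_Z$ contains $\{X_i[d]\}_{i\in I,\,d\in\mathbb{Z}}$ and $\{X_i\}_{i\in I}$ generates $\mathcal{C}$ as a triangulated category, we have $\mathcal{D}_Z=\mathcal{C}$, and hence $\beta=c\,\alpha$ on all of $\mathcal{C}$. The main obstacle will be the bookkeeping around the shift-invariance $c_{X[d]}=c_X$ and the five-lemma step in the presence of the shift in the target $\Hom(W,Z[n-1])^{\vee}$; both reduce to careful unpacking of the bifunctoriality of $\alpha$ and $\beta$ with respect to the shift autoequivalence.
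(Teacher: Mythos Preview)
Your proof is correct and follows essentially the same strategy as the paper's: define local ratios $c_{X_i}$ via the one-dimensionality of $\Hom(X_i,X_i[n-1])$, use Lemma~\ref{lem:bifunctoriality}(1)--(2) together with the chain-connectedness hypothesis to show all $c_{X_i}$ coincide, and then extend to arbitrary objects using Lemma~\ref{lem:bifunctoriality}(3) and the fact that the $X_i$ generate. Your write-up is in fact slightly more explicit than the paper's in the propagation step; the only quibble is that the phrase ``five-lemma'' in the final step is a mild misnomer (you need that two commuting isomorphisms agree, not that one map is an isomorphism), but the paper glosses over this point in exactly the same way (``applying Lemma~\ref{lem:bifunctoriality}(3) iteratively''), and the argument goes through with a routine diagram chase using naturality in both variables.
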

\begin{proof}
For each $i \in I$, let $f_i \in \Hom_{\mathcal{C}}(X_i, X_i[n-1])$ be a generator. Then, since $\alpha_{X_i,X_i[n-1]} (\mathrm{Id}_{X_i[n-1]}, f_i) \neq 0 \neq \beta_{X_i,X_i[n-1]} (\mathrm{Id}_{X_i[n-1]}, f_{i})$, there exists a nonzero constant $c_i \in \mathbb{K}$ such that
\begin{equation}\label{eq1}
\beta_{X_i,X_i[n-1]} (\mathrm{Id}_{X_i[n-1]}, f_{i}) =c_i \alpha_{X_i,X_i[n-1]} (\mathrm{Id}_{X_i[n-1]}, f_i).
\end{equation}
We first show that $c_i = c_j$ for all $i,j \in I$. Since our assumption implies that it suffices to prove the assertion for any $i,j \in I$ such that
$$ \Hom^*_{\mathcal{C}}(X_i,X_j) \neq 0,$$
we assume that $i$ and $j$ satisfy the above condition.

Then there exists $m \in \Z$ such that $\Hom_{\mathcal{C}} (X_i,X_j[m]) \neq 0$. Since $\alpha$ is a $(n-1)$-Calabi--Yau pairing on $\mathcal{C}$, for any nonzero $f_{ij} \in \Hom_{\mathcal{C}} (X_i,X_j[m])$, there exists $f_{ji} \in \Hom_{\mathcal{C}} ( X_j[m], X_i[n-1])$ such that
$$\alpha_{X_i, X_{j}[m]} (f_{ji},f_{ij}) \neq 0.$$

But Lemma \ref{lem:bifunctoriality} (1) implies that
$$\alpha_{X_i, X_{j}[m]} (f_{ji},f_{ij})  = \alpha_{X_i, X_i} (\mathrm{Id}_{X_i[n-1]} ,f_{ji} \circ f_{ij}) \neq 0.$$
This means that $f_{ji} \circ f_{ij} \neq 0 \in \Hom_{\mathcal C} (X_i,X_i[n-1])$ and hence it is a nonzero multiple of $f_i$.
Similarly, $f_{ij}[n-1] \circ f_{ji}$ is a nonzero multiple of $f_j[m]$. Let us say that 
$$ f_i = a_i f_{ji} \circ f_{ij} \text{ and } f_j[m] = a_j f_{ij}[n-1] \circ f_{ji}$$
for some nonzero elements $a_i$ and $a_j$ of $\mathbb{K}$.

Now Lemma \ref{lem:bifunctoriality} (2) says that
$$\alpha_{X_i,X_j[m]} (f_{ji}, f_{ij}) = \alpha_{X_j[m],X_i[n-1]} ( f_{ij}[n-1], f_{ji}) $$

From this, it follows that
\begin{equation}\label{eq2}
\begin{split}
\alpha_{X_i,X_i[n-1]} (\mathrm{Id}_{X_i[n-1]}, f_i) &= a_i \alpha_{X_i,X_i[n-1]} (\mathrm{Id}_{X_i[n-1]}, f_{ji} \circ f_{ij}) \\
&= a_i  \alpha_{X_i,X_j[m]} ( f_{ji} , f_{ij})\\
&= a_i \alpha_{X_j[m], X_i[n-1]} ( f_{ij}[n-1], f_{ji})\\
&= a_i \alpha_{X_j[m], X_j[m+n-1]} (  \mathrm{Id}_{X_j[m]},f_{ij}[n-1]\circ f_{ji})\\
&= a_i a_j^{-1} \alpha_{X_j[m],X_j[m+n-1]} (  \mathrm{Id}_{X_j[m]}, f_j[m])\\
&=  a_i a_j^{-1}  \alpha_{X_j,X_j[n-1]} (  \mathrm{Id}_{X_j[n-1]}, f_j).
\end{split}
\end{equation}
Similarly, we have
\begin{equation}\label{eq3}
\beta_{X_i,X_i[n-1]} (\mathrm{Id}_{X_i[n-1]}, f_i) =   a_i a_j^{-1}  \beta_{X_j,X_j[n-1]} (  \mathrm{Id}_{X_j[n-1]}, f_j).
\end{equation}

Now \eqref{eq1}, \eqref{eq2} and \eqref{eq3} imply that $c_i = c_j$ as desired. Let us now call $c =c_i$ for $i \in I$.
This actually means that
$$\beta_{X_i,X_j[m]} (f_{ji} ,f_{ij})  = c\alpha_{X_i,X_j[m]} (f_{ji}, f_{ij})$$
for all $f_{ij} \in \Hom_{\mathcal{C}} (X_i,X_j[m])$ and $f_{ji} \in \Hom_{\mathcal{C}}( X_j[m],X_i[n-1])$ due to Lemma \ref{lem:bifunctoriality} (1) again.

Since $\{X_i\}_{i\in I}$ is assumed to generate $\mathcal{C}$, any object of $\mathcal{C}$ is obtained as an iterated cone from objects $\{X_i\}$. Then applying Lemma \ref{lem:bifunctoriality} (3) iteratively, one can show that for any object $X$, $Y$ of $\mathcal{C}$, the following holds:
$$\beta_{X,Y} = c \alpha_{X,Y}.$$
The assertion follows.
\end{proof}

Turning back to the case of the derived category $D^{\pi} (\W/\F)$, the cocores $\{L_v\}_{v\in Q_0}$ are known to satisfy
\begin{itemize}
	\item $\dim \Hom_{D^{\pi}(\W/\F)} (L_v ,L_v) = \dim H^0(e_v \cdot \Gamma_Q \cdot e_v) = 1$ and
	\item $\Hom^*_{D^{\pi}(\W/\F)} (L_v,L_w) = H^*(e_w \cdot \Gamma_Q \cdot e_v) \neq 0$
\end{itemize}
for all $v,w\in \widehat{Q}_0 = Q_0$ when $n \geq 3$. Here, $e_v$ is the constant path at $v$ for each $v\in \widehat{Q}_0$.
 Indeed, for each $v,w \in \widehat{Q}_0$, there is a unique shortest path from $v$ to $w$ in the Ginzburg double $\widehat{Q}$. The shortest path 
\begin{itemize}
\item has the minimal length and
\item has the maximal degree
\end{itemize}
 among all paths from $v$ to $w$ when $n \geq 3$. The second property implies that the shortest path is a cocycle and the first property says that it is not a coboundary since, for a given path in $\widehat{Q}$, the differential of $\Gamma_Q$ increases its length or maps it to zero. Therefore, it defines a nonzero cohomology class in $H^*(e_w \cdot \Gamma_Q \cdot e_v)$. In particular, if $v=w$, then the shortest path is the constant path $e_v$ at $v$, which has degree $0$, and all the other loops at $v$ have negative degrees. As a result, Lemma \ref{lem:equivalence} can be applied to prove our assertion.

\subsection{Weinstein domains with periodic Reeb flow on boundary}
Let $(W,\omega=d\lambda)$ be a Weinstein domain of dimension $2n$ for some $n \in \Z_{\geq 2}$ such that $2c_1(W) =0$. 

Let $Z$ denote the Liouville vector field on $(W,\omega=d\lambda)$, i.e., $\iota_Z \omega =\lambda$. Furthermore, let $\xi = \ker \lambda|_{\partial W}$ denote the contact distribution on the boundary $\partial W$.
Since the Reeb vector field $R$ and the Liouville vector field $Z$ are linearly independent everywhere on $TW|_{\partial W}$, the restriction of the tangent bundle $TW$ to the boundary $\partial W$ decompose into
\begin{equation*}\label{eq:decomposition}
TW|_{\partial W} = (\mathbb{R} \langle Z \rangle \oplus \mathbb{R} \langle R \rangle) \bigoplus  \xi.
\end{equation*}
Furthermore, since there is an almost complex structure $J$ on $W$ such that $J$ preserves both summands $(\mathbb{R} \langle Z \rangle \oplus \mathbb{R} \langle R \rangle)$ and $\xi$, the assumption $2c_1(W)=0$ implies that $2c_1(\xi) =0$ as well.
This allows us to choose a quadratic complex volume form 
\begin{equation*}\label{eq:quadratic}
\eta = \eta_{1} \wedge \eta_{2}
\end{equation*}
on $TW$ for some quadratic complex volume forms $\eta_1$ on $(\mathbb{R} \langle Z \rangle \oplus \mathbb{R} \langle R \rangle)$ and $\eta_2$ on $\xi$.

Assume further that the Reeb flow $R$ on the contact boundary $(\partial W, \xi = \ker \lambda|_{\partial W})$ is periodic in the sense that there exists $T >0$ such that
$$\mathrm{Fl}_R^T = \mathrm{Id}_{\partial W},$$
where $\mathrm{Fl}_R^t$ is the time-$t$ flow of the Reeb vector field $R$.
Let $T_0$ be the minimal period among such periods.

Let $\gamma_0 :S^1 = \mathbb{R}/\Z \to \partial W$ be a principal Reeb orbit given by
\begin{equation*}\label{eq:gamma1}
\gamma_0 (t) = \mathrm{Fl}_R^{T_0t} (y)
\end{equation*}
for some $y\in \partial W$. 

As done in \cite{rit13}, let us consider a symplectic trivialization
\begin{equation*}\label{eq:trivialization}
\Psi : (S^1 \times \R^{2(n-1)}, \sum_{j=1}^{n-1} dx_j \wedge dy_j) \cong (\gamma_0^* \xi, (d\lambda|_{\partial W})|_{\xi})
\end{equation*}
such that  
\begin{equation}\label{eq:compatibility}
\Psi^* \eta_2 = (\wedge_{j=1}^{n-1} (dx_j + \sqrt{-1} dy_j))^{\otimes 2},
\end{equation}
where $(x_1,y_1,\dots,x_{n-1},y_{n-1})$ denotes the coordinate of $\R^{2(n-1)}$. Then consider the loop $\Gamma_0 :S^1=\R/\Z \to \mathrm{Sp}(2(n-1))$ of symplectic matrices given by
\begin{equation*}\label{eq:Gamma}
{\Gamma}_0(t) = \Psi^{-1}_t \circ d \mathrm{Fl}_{R}^{tT_0} \circ \Psi_0,
\end{equation*}
where $\Psi_t =\Psi|_{\{t\} \times \R^{2(n-1)}}$.

Let us assume further that the Maslov index $\mu_{T_0}$ given by
\begin{equation}\label{eq:maslovindex}
\mu_{T_0} \coloneqq \mu_{RS} (\mathrm{Graph}\,{\Gamma}_0, \Delta)
\end{equation}
is nonzero. Here the right hand side means the Robbin--Salamon index of the path of Lagrangians subspaces of $(\R^{2(n-1)}\times \R^{2(n-1)}, -\omega_{std}\oplus \omega_{std})$ given by the graphs of ${\Gamma}_0$ with respect to the diagonal Lagrangian subspace $\Delta$.

On the other hand, the wrapped Fukaya category $\W(W)$ is shown to be generated by Lagrangian cocores, say $\{L_i\}_{i=1}^k$, by \cite{cdrgg17,gps20}. We will prove the following proposition in this subsection.
\begin{prop}\label{prop:periodic}
Let $(W,\omega=d\lambda)$ be a Weinstein domain with $2c_1(W) =0$ and with periodic Reeb flow such that the Maslov index $\mu
_{T_0}$ in \eqref{eq:maslovindex} is nonzero. Then the condition \eqref{condition3} of Theorem \ref{thm:maintheorem} is satisfied for $W$ with respect to the Lagrangian cocores $\{L_i\}_{i=1}^k$ and any background class $b\in H^2(W,\Z/2)$. 
\end{prop}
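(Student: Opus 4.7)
The strategy is to exploit the periodicity of the Reeb flow to show that, in each fixed cohomological degree, the direct system $\{HF^{\bullet}(L_i, L_j; H_{\tau_w})\}_{w}$ of Lemma \ref{lem:directlimit} stabilizes at a finite stage, from which the finite-dimensionality of $HW^{k}(L_i, L_j)$ in each degree $k$ follows. Since the Lagrangian cocores $\{L_i\}_{i=1}^{k}$ already generate $\W(W)$, this will suffice to verify condition \eqref{condition3} of Theorem \ref{thm:maintheorem}.

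The first step will be to parametrize the Reeb chords from $\partial L_i$ to $\partial L_j$. Because $\mathrm{Fl}_R^{T_0} = \mathrm{Id}_{\partial W}$, any chord of length $T$ gives rise to chords of length $T + kT_0$ for every $k \in \Z_{\geq 0}$ via extension by the Reeb flow. Since $\partial L_i$ and $\partial L_j$ are compact Legendrians whose chord endpoints form discrete transverse intersections $\mathrm{Fl}_R^t(\partial L_i) \cap \partial L_j$, there are only finitely many base chords $\{c_a\}_{a \in A_{ij}}$ of length in $(0, T_0]$, and every Reeb chord is an iterate $c_a^{(k)}$ of some $c_a$. Next I will compute the Maslov index of $c_a^{(k)}$: using the quadratic complex volume form $\eta = \eta_1 \wedge \eta_2$ and the trivialization $\Psi$ satisfying \eqref{eq:compatibility}, together with the additivity of the Robbin--Salamon index under concatenation of symplectic paths, the grading will satisfy
\[ \deg c_a^{(k)} = \deg c_a^{(0)} + k\, \mu_{T_0}, \]
where the contribution $k \mu_{T_0}$ comes from iterating $k$ full periods of the symplectic loop $\Gamma_0$ defining $\mu_{T_0}$ in \eqref{eq:maslovindex}.

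Given this grading formula, the stabilization argument will proceed as follows. Since $\mu_{T_0} \neq 0$ and $|A_{ij}| < \infty$, for each fixed $k_0 \in \Z$ the set of Reeb chords of degree in $\{k_0 - 1, k_0, k_0 + 1\}$ is finite, so there exists $T^{(k_0)} > 0$ beyond which no Reeb chord of such degree occurs. Choosing $w_0$ with $\tau_{w_0} \geq T^{(k_0)}$, the generators of $CF^{\ell}(L_i, L_j; H_{\tau_w})$ for $\ell \in \{k_0 - 1, k_0, k_0 + 1\}$ agree for all $w \geq w_0$, and one may arrange the Floer data so that the differentials among them are independent of $w$ as well. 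The continuation maps will thus induce isomorphisms $HF^{k_0}(L_i, L_j; H_{\tau_w}) \xrightarrow{\sim} HF^{k_0}(L_i, L_j; H_{\tau_{w+1}})$ for $w \geq w_0$, and Lemma \ref{lem:directlimit} yields $HW^{k_0}(L_i, L_j) \cong HF^{k_0}(L_i, L_j; H_{\tau_{w_0}})$, which is finite-dimensional.

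The main technical obstacle will be the Maslov index calculation. While the idea is standard (compare the analogous arguments in \cite{rit13}), the computation requires extending a compatible symplectic trivialization from $\gamma_0^* \xi$ to $c_a^* \xi$ for every base chord $c_a$ (not merely for the principal orbit $\gamma_0$) and verifying rigorously that the per-period contribution is precisely $\mu_{T_0}$, independent of the ambiguity in the chosen trivialization. The compatibility condition \eqref{eq:compatibility} together with $2c_1(\xi) = 0$ is exactly what ensures that this quantity is well-defined and additive over concatenations.
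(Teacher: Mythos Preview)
Your proposal follows essentially the same route as the paper: enumerate Reeb chords as iterates of finitely many base chords, show the Maslov grading shifts by $\mu_{T_0}$ per iterate, and use $\mu_{T_0}\neq 0$ to conclude that only finitely many chords land in each degree, whence the direct system of Lemma~\ref{lem:directlimit} stabilizes degreewise.

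Two places where the paper is more careful and where your write-up has small gaps. First, the non-degeneracy you assume for Reeb chords from $\partial L_i$ to $\partial L_j$ is not automatic, especially when $i=j$; the paper handles this by replacing $L_j$ with a Hamiltonian-isotopic $L'_j$ for which all chords are non-degenerate (and which has $HW^*(L_i,L'_j)\cong HW^*(L_i,L_j)$). Second, your sentence ``one may arrange the Floer data so that the differentials among them are independent of $w$'' is exactly where the paper invests effort: it introduces \emph{good pairs} of Hamiltonians (Definition~\ref{dfn:good}) that literally agree below a radius, uses the action filtration (Lemma~\ref{lem:action}) to identify the chord sets in the relevant window, and observes that for such pairs the continuation map is the identity on generators because only the constant solution contributes. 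With this in hand the paper argues only surjectivity of $HF^d(L_i,L'_j;H_{\nu_m})\to HW^d$ (rather than your isomorphism using degrees $k_0-1,k_0,k_0+1$), which already suffices for finite-dimensionality. Your degree formula carries the opposite sign to the paper's $\deg\overline{x}=\deg x-\mu_{T_0}$, but since only $\mu_{T_0}\neq 0$ is used, this is a convention issue and does not affect the conclusion.
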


To prove Proposition \ref{prop:periodic}, we need to show that for any pair $1\leq i,j \leq k$ and any degree $d \in \Z$, $HW^{d}(L_i,L_j)$ is finite-dimensional. For that purpose, for a given $1\leq i, j \leq k$, we introduce a new Lagrangian submanifold $L'_j$ obtained from $L_j$ by perturbing near $\partial W$ with a generic Hamiltonian isotopy. To be more precise, we define $L'_j$ to be $\phi_1(L_j)$ for some path $\{\phi_t\}_{t\in [0,1]}$ of Hamiltonian diffeomorphisms on $W$ satisfying
\begin{itemize}
	\item $\phi_0 =\mathrm{Id}$,
	\item $\phi_t^*\lambda = \lambda$ for all $t\in [0,1]$,
 and
	\item that every Reeb chord from $\partial L_i$ to $\partial L'_j$ is non-degenerate.
\end{itemize}
Note that the last requirement can be satisfied even when $i=j$. Since wrapped Floer cohomology is invariant under such Hamiltonian perturbations, we will consider the wrapped Floer cohomology $HW^*(L_i,L'_j)$ instead of $HW^*(L_i,L_j)$.
We may further assume that the primitive $h_{L_i}$ of $\lambda|_{\widehat{L}_i}$ is zero outside a compact subset of $W$ for any $1\leq i \leq k$. This is possible for $n\geq 2$ since the boundary $\partial L_i$ of a Lagrangian cocore $L_i$ is topologically $(n-1)$-dimensional sphere and hence is connected. Then the primitive $h_{L'_j}$ of $\lambda|_{\widehat{L}'_j}$ can be chosen to be zero outside a compact subset of $W$ for any $1\leq j \leq k$ as well.

Let $H_{\nu} : \widehat{W} \to \R$ be an admissible Hamiltonian for $\nu \notin \mathrm{Spec} (\partial L_i, \partial L'_j)$ such that
$$H_{\nu}(r,y) = h_\nu(r), \forall (r,y) \in [1,\infty) \times \partial W$$
 for some function $h_{\nu} : [1,\infty) \to \R$ satisfying
 \begin{itemize}
 \item $h_{\nu}'(r) \geq 0$,
 \item $h'_{\nu}(r) = \nu, r \geq r_{\nu}$ for some $r_{\nu} >1$ and
 \item $h''_{\nu}(r) >0$ for $1< r < r_{\nu}$.
 \end{itemize}
 
 To make our arguments more convenient, for given such Hamiltonians $H_{\mu}$ and $H_{\nu}$ for some $\mu <\nu$, we further consider a certain requirement for the pair $(H_{\mu},H_{\nu})$ following the idea of \cite[Section 8]{kwo-van16}:
\begin{dfn}\label{dfn:good}
Let $\mu < \nu$. A pair ($H_{\mu}$,$H_{\nu}$) of admissible Hamiltonians on $\widehat{W}$ satisfying the above conditions for some functions $h_{\nu}$ and $h_{\mu}$ is said to be a {\em good pair} if
\begin{itemize}
\item $r_{\nu} > r_{\mu}$ and
\item $h_{\nu} = h_{\mu}$ on $\widehat{W} \setminus (r_{\mu}, \infty) \times \partial W$.
\end{itemize}

\end{dfn}

\begin{center}
\begin{figure}[h!]
  \includegraphics[width=2in]{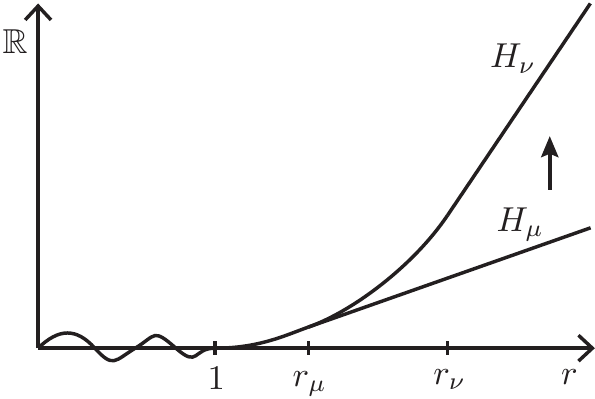}
  \caption{A good pair of Hamiltonians}
  \label{figure2}
\end{figure}
\end{center}

Let $1\leq i, j \leq k$ be given. For an admissible Hamiltonian $H_{\nu}$ as above, any Hamiltonian chord $x \in \mathcal{I}(L_i,L'_j;H_{\nu})$ is one of the following:
\begin{enumerate}
\item[(a)] a short Hamiltonian chord from $L_i$ to $L'_j$ in the interior of $W$ or
\item[(b)] a Hamiltonian chord in $[1,\infty) \times \partial W$ corresponding to a Reeb chord in the sense that
$$x(t) = (r_x, \gamma (t)) \coloneqq \left(r_x, \mathrm{Fl}_R^{t h_{\nu}'(r_x)} ( \gamma (0))\right) \in [1,\infty) \times \partial W, t\in [0,1],$$ 
for some $r_x\in [1,\infty)$ such that $h'_{\nu}(r_x) \in \mathrm{Spec}(\partial L_i,\partial L'_j)$.
\end{enumerate}
The non-degeneracy assumption on Reeb chords from $\partial L_i$ to $\partial L'_j$ and the assumption that $h''_{\nu}(r) >0$ imply that every Hamiltonian chord $x \in \mathcal{I}(L_i,L'_j;H_{\nu})$ of the form (b) is non-degenerate.

For any Hamiltonian chord $x \in \mathcal{I}(L_i,L'_j;H_{\nu})$ of the form (b) for some $r_x \in [1,\infty)$, since the primitive functions $h_{L_i}$ and $h_{L'_j}$ are chosen to be zero near the boundary, the action $\mathcal{A}_{H_{\nu}}(x)$ is given by
$$\mathcal{A}_{H_{\nu}}(x) = -r_xh'_{\nu} (r_x) +h_{\nu} (r_x).$$ 

This leads to consider a function $A_{\nu} : [1,\infty) \to \R$ defined by
$$A_{\nu} (r) = - r h'_{\nu} ( r) + h_{\nu} (r),$$
which gives the action value at the radius $r$. Observe that $A_{\nu}$ is decreasing since $A_{\nu}'(r) = -r h''_{\nu}(r) <0$.

For any good pair $(H_{\mu}, H_{\nu})$ of admissible Hamiltonians on $\widehat{W}$, any Hamiltonian chord $x$ for $H_{\mu}$ is again a Hamiltonian chord for $H_{\nu}$ and their actions coincide, i.e.,
$$\mathcal{A}_{H_{\mu}}(x) = \mathcal{A}_{H_{\nu}}(x).$$
Moreover, $A_{\nu} (r) = A_{\mu}(r)$ for all $r\in [1,r_{\mu}]$.

For any $-\infty \leq  a \leq b  \leq \infty$ and $\nu >0$, consider the set of Hamiltonian chords with actions in the window $(a,b)$:
$$\mathcal{I}_{(a,b)}(L_i,L'_j; H_{\nu})=   \{ x \in \mathcal{I}(L_i,L'_j; H_{\nu}) \;|\; a<  \mathcal{A}_{H_{\nu}} (x)<b \}.$$
Then the above observation implies the following.
\begin{lem}\label{lem:action}
For any good pair $(H_{\mu}, H_{\nu})$ of admissible Hamiltonians on $\widehat{W}$ and any $-\infty< A_{\mu}(r_\mu) \leq a < b \leq \infty$, we have
$$ \mathcal{I}_{(a ,b)}(L_i,L'_j; H_{\mu}) =  \mathcal{I}_{(a ,b)}(L_i,L'_j; H_{\nu}).$$
\end{lem}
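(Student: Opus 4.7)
The plan is to exploit the fact that $H_\mu$ and $H_\nu$ agree on $\widehat{W}\setminus(r_\mu,\infty)\times\partial W$, so that any Hamiltonian chord whose image lies entirely in this common region is simultaneously a chord of both Hamiltonians with the same action. The statement then reduces to showing that, within the prescribed action window, every chord of either Hamiltonian actually sits in this common region.

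First I would handle the containment $\mathcal{I}_{(a,b)}(L_i,L'_j;H_\mu)\subset \mathcal{I}_{(a,b)}(L_i,L'_j;H_\nu)$. Every chord of $H_\mu$ is of type (a) (short, lying in the interior of $W$) or of type (b) at some radius $r_x$ with $h'_\mu(r_x)\in \mathrm{Spec}(\partial L_i,\partial L'_j)$. Because $h'_\mu\equiv\mu\notin \mathrm{Spec}(\partial L_i,\partial L'_j)$ on $[r_\mu,\infty)$, any type (b) chord must satisfy $r_x<r_\mu$. Hence every chord of $H_\mu$ lies in $\widehat{W}\setminus(r_\mu,\infty)\times\partial W$, where $H_\mu$ and $H_\nu$ coincide; it is therefore automatically a chord of $H_\nu$, and the agreement of the primitives forces $\mathcal{A}_{H_\mu}(x)=\mathcal{A}_{H_\nu}(x)$, placing it in the same action window.

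For the reverse containment, the key input is the monotonicity of the action function. Using the assumption that $h_{L_i}$ and $h_{L'_j}$ vanish near the boundary, a type (b) chord of $H_\nu$ at radius $r_x$ has action $\mathcal{A}_{H_\nu}(x)=A_\nu(r_x)$, and $A'_\nu(r)=-r h''_\nu(r)<0$ on $(1,r_\nu)$, so $A_\nu$ is strictly decreasing on $[1,r_\nu]$. Since $h_\nu=h_\mu$ on $[1,r_\mu]$, we also have $A_\nu(r_\mu)=A_\mu(r_\mu)$. Therefore, if $x\in\mathcal{I}_{(a,b)}(L_i,L'_j;H_\nu)$ is of type (b), then $A_\nu(r_x)>a\geq A_\mu(r_\mu)=A_\nu(r_\mu)$ forces $r_x<r_\mu$, which again places $x$ in the region where $H_\mu=H_\nu$, making it a chord of $H_\mu$ with the same action. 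Type (a) chords of $H_\nu$ lie in the interior of $W$ by definition and are likewise chords of $H_\mu$.

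The main conceptual point—that the hypothesis $a\geq A_\mu(r_\mu)$ precisely cuts off the potentially new chords created by extending the Hamiltonian from radius $r_\mu$ to $r_\nu$—is what makes the lemma a bookkeeping exercise rather than an analytic one, so I do not expect any serious obstacle; one only needs to be careful to verify the action formula $\mathcal{A}_{H_\nu}(x)=A_\nu(r_x)$ using that $\lambda|_{\widehat{L}_i}$ and $\lambda|_{\widehat{L}'_j}$ vanish near infinity, and to invoke the monotonicity of $A_\nu$.
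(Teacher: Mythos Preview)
Your proposal is correct and follows essentially the same approach as the paper: the paper states the lemma as an immediate consequence of the observations preceding it (that every $H_\mu$-chord is an $H_\nu$-chord with the same action, that $A_\nu=A_\mu$ on $[1,r_\mu]$, and that $A_\nu$ is decreasing), and your argument is exactly the detailed verification of this implication, using the monotonicity of $A_\nu$ together with the lower bound $a\geq A_\mu(r_\mu)=A_\nu(r_\mu)$ to force $r_x<r_\mu$ for the reverse inclusion.
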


For any regular increasing homotopy from $H_{\mu}$ to $H_{\nu}$, the associated continuation from $CF^*(L_i,L'_j;H_{\mu})$ to $CF^*(L_i,L'_j;H_{\nu})$ maps every $x\in \mathcal{I}(L_i,L'_j;H_{\mu})$ to itself $x\in \mathcal{I}(L_i,L'_j;H_{\nu})$. Indeed, this can be deduced by observing that the continuation map does not decrease the action and hence that the constant solution $u: \R \times [0,1] \to \widehat{W}$ given by $u(s,t) = x(t)$ is the unique solution contributing to the continuation map.

Now, for each $l \in \mathbb{Z}_{\geq 1}$, let $\nu_l$ be a real number such that
\begin{itemize}
\item $\nu_l > lT_0$ and
\item $(lT_0,\nu_l] \cap \mathrm{Spec}(\partial L_i,\partial L'_j) = \emptyset$.
\end{itemize}

We may choose a sequence of admissible Hamiltonians $(H_{\nu_l})_{l \in \mathbb{Z}_{\geq 1}}$ on $\widehat{W}$ given as above in such a way that the pair $(H_{\nu_l}, H_{\nu_{l+1}})$ is good for every $l$ in the sense of Definition \ref{dfn:good}. Denote $r_l = r_{\nu_l}$ for each $l \in \Z_{\geq 1}$.

Then we have
\begin{equation}\label{eq:directlimit}
HW^*(L_i,L'_j) =\varinjlim_{l \to \infty} HF^*(L_i,L'_j;H_{\nu_l}).
\end{equation}

For each $l \in \mathbb{Z}_{\geq 1}$, consider the function $A_{\nu_l} : [1,\infty) \to \R$ defined by
$$A_{\nu_l} (r) = - r h'_{\nu_l} ( r) + h_{\nu_l} (r)$$
as above. Then, for each $2\leq m \leq l$, let $I_m^l$ denote
$$I_m^l = \mathcal{I}_{(A_{\nu_l}(r_{m}), A_{\nu_l}  (r_{m-1}) )}(L_i,L'_j;H_{\nu_l})$$
and for $m=1$, let $I_m^l=I_1^l$ denote
 $$I_1^l = \mathcal{I}_{(A_{\nu_l}(r_{1}), \infty)}(L_i,L'_j;H_{\nu_l}).$$
Then Lemma \ref{lem:action} implies that $I_m^l$ is independent of $l$. Furthermore, for $x=(r_x, \gamma) \in I_m^l$, $r_x \in (r_{m-1},r_m)$ since $A_{\nu_l}$ is decreasing.

Since the Reeb flow is periodic, the Hamiltonian chords in $\mathcal{I} (L_i,L'_j; H_{\nu_l})$, $n \in \Z_{\geq 2}$, occur periodically as well. Indeed, for all $2 \leq m < l$, there exists a bijection between
\begin{equation*}\label{eq:bijection}
I_{m}^l \xrightarrow{1 : 1} I_{m+1}^l,x \mapsto \overline{x}.
\end{equation*}
More precisely, for each $x = (r_x, \gamma) \in I_{m}^l$, there exist $\overline{r}_x \in (r_m, r_{m+1})$ and a Reeb chord $\overline{\gamma}$ such that
\begin{itemize}
\item $h'_{\nu_l}(\overline{r}_x) = h'_{\nu_l}(r_x) +T_0$
\item $\overline{\gamma}$ is obtained from $\gamma$ by concatenating a periodic Reeb chord $\gamma_0$ of period $T_0$ to its end point and
\item $\overline{x} \coloneqq (\overline{r}_x, \overline{\gamma}) \in I_{m+1}^l.$
\end{itemize}
Here $\gamma_0 : [0,1]\to \partial W$ is the Reeb chord given by
\begin{equation*}\label{eq:gamma2}
\gamma_0 (t) =  \mathrm{Fl}_R^{T_0 t} ( \gamma(1)).
\end{equation*}

The requirement \eqref{eq:compatibility} and the definition \eqref{eq:maslovindex} of $\mu_{T_0}$ imply that the degrees of $x$ and $\overline{x}$ are related as follows:
\begin{equation}\label{eq:degree}
	\deg \overline{x} = \deg x - \mu_{T_0}.
\end{equation}

Since $I_2^l$ is finite, there exist some integers $A \leq B$ such that $A \leq \deg x \leq B$ for all $x\in I_2^l$, $l \geq 2$. By applying \eqref{eq:degree} inductively, we have:
\begin{lem}\label{lem:degree}
There exist integers $A \leq B$ such that for any Hamiltonian chord $x\in I_m^l$, $2\leq m \leq l$, the following holds:
$$ A-(m-2) \mu_{T_0} \leq \deg x \leq B-(m-2) \mu_{T_0}.$$
\end{lem}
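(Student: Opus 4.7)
The plan is a direct induction on $m$, using the bijection $I_m^l \to I_{m+1}^l$, $x \mapsto \overline{x}$ together with the degree shift formula \eqref{eq:degree}.

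First I would establish the base case $m=2$. Since Hamiltonian chords for $H_{\nu_l}$ are assumed to be non-degenerate and lie in a compact region of $\widehat{W}$ (thanks to the action truncation defining $I_2^l$), the set $I_2^l$ is finite. Moreover, by Lemma \ref{lem:action} combined with the goodness of the pairs $(H_{\nu_l}, H_{\nu_{l+1}})$, the set $I_2^l$ is independent of $l$. Hence the finite set of integers $\{\deg x \mid x \in I_2^l\}$ admits a minimum $A$ and maximum $B$, giving $A \leq \deg x \leq B$ for every $x \in I_2^l$.

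For the inductive step, suppose the bound $A - (m-2)\mu_{T_0} \leq \deg x \leq B - (m-2)\mu_{T_0}$ holds for all $x \in I_m^l$ and fix some $y \in I_{m+1}^l$. By the bijection $I_m^l \to I_{m+1}^l$ described just before \eqref{eq:degree}, there is a unique $x \in I_m^l$ with $\overline{x} = y$, and by \eqref{eq:degree} we have $\deg y = \deg x - \mu_{T_0}$. Substituting the inductive bounds on $\deg x$ immediately yields
\[
A - (m-1)\mu_{T_0} \leq \deg y \leq B - (m-1)\mu_{T_0},
\]
which is exactly the asserted bound at level $m+1$. Since both the bijection and the degree shift formula are already in place from the preceding discussion, no additional geometric input is required, and the induction goes through without obstacle. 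The only subtlety worth double-checking is that the constants $A$ and $B$ obtained from $I_2^l$ are genuinely independent of $l$, which is ensured precisely by the goodness condition (Definition \ref{dfn:good}) and Lemma \ref{lem:action}.
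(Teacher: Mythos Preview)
Your proposal is correct and follows exactly the paper's approach: the paper also takes $A,B$ from the finiteness (and $l$-independence) of $I_2^l$ and then applies the degree shift \eqref{eq:degree} inductively via the bijection $I_m^l \to I_{m+1}^l$. Your write-up is in fact more detailed than the paper's one-line argument, and correctly flags that Lemma~\ref{lem:action} is what makes $A,B$ independent of $l$.
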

Now we are ready to complete the proof of Proposition \ref{prop:periodic}. From Lemma \ref{lem:degree}, it follows that, for each $d \in \mathbb{Z}$, there exist finitely many $m \in \mathbb{Z}_{\geq 1}$ such that
$d =\deg x$ for some $x\in I_m^l$, $l \geq m$ since $\mu_{T_0}$ is nonzero by assumption. If there is no such an integer $m$, then $HW^d(L_i,L'_j) \cong HW^d(L_i,L_j)$ is zero.  Otherwise, let $m$ be the maximal integer among those. This means that every element of $HW^d(L_i,L'_j)$ lies in the image of $HF^d(L_i,L'_j;H_{\nu_m})$ in the direct limit \eqref{eq:directlimit}. But, since $HF^d(L_i,L'_j;H_{\nu_m})$ is finite-dimensional, this proves Proposition \ref{prop:periodic}.

\bibliographystyle{amsalpha}
	\bibliography{rabinowitz.bib}
\end{document}